\newif\ifentcs
\newtheorem{theorem}{Theorem}[section]
\newtheorem{proposition}[theorem]{Proposition}
\newtheorem{corollary}[theorem]{Corollary}
\newtheorem{lemma}[theorem]{Lemma}
\newenvironment{proof}{\emph{Proof.}}{\relax\hfill\qed}
\newtheorem{definition}[theorem]{Definition}
\newtheorem{remark}[theorem]{Remark}
\newtheorem{example}[theorem]{Example}
\newcommand\qed{\hfill$\Box$\vskip0.2em}
\newcommand{\exr}{\overline {\mathbb R}_{+}}
\newcommand{\set}[2]{\{#1\mid#2\}}
\newcommand{\id}{\mathrm{id}}
\newcommand{\TOP}{{\bf TOP$_0$}}
\newcommand{\V}{{\mathcal V_w}}
\newcommand\simple{{\mathrm f}}
\newcommand{\VF}{{\mathcal V_\simple}}
\newcommand{\VP}{{\mathcal V_{\mathrm p}}}
\newcommand{\ctop}{\bf TOP_{0}}
\newcommand{\real}{\mathbb{R}}
\newcommand\Rplus{\real_+}
\newcommand\creal{\overline\real_+}
\newcommand\Lform{\mathcal L}
\newcommand{\identity}[1]{\mathrm{id}_{#1}}
\newcommand\nat{\mathbb{N}}
\newcommand\upc{\mathop{\uparrow}}
\newcommand\uuarrow{\rlap{$\uparrow$}\raise.5ex\hbox{$\uparrow$}}
\newcommand\dc{\mathop{\downarrow}}
\newcommand\ddarrow{\rlap{$\downarrow$}\raise.5ex\hbox{$\downarrow$}}
\newcommand\limp{\mathrel{\Rightarrow}}
\newcommand\Open{{\mathcal O}}
\newcommand\Sierp{\mathbb{S}}
\newcommand\App{\mathsf{App}}
\newcommand\supp{\mathop{\text{supp}}}
 \newcommand\ForAuthors[1]
 \newcommand\papertitle{Algebras of the extended probabilistic powerdomain monad}
 \newcommand\paperabs{%
    We investigate the Eilenberg-Moore algebras of the extended
    probabilistic powerdomain monad~$\V$ over the category $\ctop$ of
    $T_0$ topological spaces and continuous maps. We prove that every
    $\V$-algebra in our setting is a weakly locally convex sober
    topological cone, and that a map is the structure map of a
    $\V$-algebra if and only if it is continuous and sends every
    continuous valuation to its unique barycentre.  Conversely, for
    locally linear sober cones---a strong form of local convexity---,
    the mere existence of barycentres entails that the barycentre map
    is the structure map of a $\V$-algebra; moreover the algebra
    morphisms are exactly the linear continuous maps in that case.
   
    We also examine the algebras of two related monads, the simple
    valuation monad~$\VF$ and the point-continuous valuation
    monad~$\VP$. In~$\ctop$ their algebras are fully characterised as
    weakly locally convex topological cones and weakly locally convex
    sober topological cones, respectively. In both cases, the algebra
    morphisms are continuous linear maps between the corresponding
    algebras.
  }
  \newcommand\digicosme{%
    This research was partially supported by Labex
    DigiCosme (project ANR-11-LABEX-0045-DIGICOSME) operated by ANR as
    part of the program ``Investissement d'Avenir'' Idex Paris-Saclay
    (ANR-11-IDEX-0003-02).
  }
  \newcommand\lsv{%
    LSV, ENS Paris-Saclay, CNRS, Universit\'e Paris-Saclay,
    France
    }
\begin{document}
\begin{frontmatter}
  \title{\papertitle}
  \author[]{Jean Goubault-Larrecq 
    \thanksref{Digicosme}\thanksref{myemail}}
  \author[]{Xiaodong Jia
    \thanksref{Digicosme}\thanksref{xjemail}}
  \address{\lsv}
  \thanks[Digicosme]{\digicosme}
  \thanks[myemail]{Email:
    \href{mailto: goubault@ens-paris-saclay.fr} {\texttt{\normalshape
        goubault@ens-paris-saclay.fr}}}
  \thanks[xjemail]{Email:
    \href{mailto: jia@lsv.fr} {\texttt{\normalshape jia@lsv.fr}}}
  \begin{abstract}
    \paperabs
  \end{abstract}
  \begin{keyword}
    Valuation powerdomain, monads, Eilenberg-Moore algebras, topological cones
  \end{keyword}
\end{frontmatter}
\else
\title{\papertitle}
\author{Jean Goubault-Larrecq \\
  \url{goubault@ens-paris-saclay.fr} \\[0.5ex]
  Xiaodong Jia\thanks{\digicosme} \\
  \url{jia@lsv.fr} \\[0.5ex]
  \lsv
  }
\begin{document}
\maketitle

\begin{abstract}
  \paperabs
\end{abstract}
\fi

\section{Introduction}
\label{sec:intro}

The \emph{probabilistic powerdomain construction} on directed complete
partially ordered sets (dcpo for short) was introduced by Jones and
Plotkin and employed to give semantics to programming languages with
probabilistic features~\cite{jones89,jones90}. The probabilistic
powerdomain of a dcpo consists of \emph{continuous valuations} defined
on the Scott-opens of the dcpos, where a valuation is a function
assigning real numbers to Scott-open subsets of the dcpo.  Jones
proved that this construction is a monad on the category of dcpos and
Scott-continuous functions. Moreover, she proved that this monad can
be restricted to the full subcategory of continuous domains, the
algebras of this monad in the category of continuous domains are the
\emph{continuous abstract probabilistic domains}, and the algebra
homomorphisms are continuous linear maps. Kirch~\cite{kirch93}
generalised Jones and Plotkin's probabilistic powerdomain by
stipulating that a valuation might take values that are not finite.
He showed that this construction is again a monad that can be
restricted to the category of continuous domains, and the algebras of
this monad in the category of continuous domains are the
\emph{continuous d-cones}, a notion well investigated 
in~\cite{tix99}.

A topological counterpart of the probability powerdomain construction
was considered by Heckmann in~\cite{heckmann96} and then by
Alvarez-Manilla, Jung and Keimel in~\cite{jung04,alvarez04}. They
considered the \emph{weak topology} on the set of continuous
valuations instead of the Scott topology. In~\cite[Proposition
5.1]{heckmann96}, Heckmann proved that the resulting space is sober
for any topological spaces; and in~\cite{jung04,alvarez04}, the
authors proved that the resulting topological space is stably compact
if the underlying space is stably compact. This topological
construction is consistent with earlier work~\cite{kirch93}, where
Kirch proved that the weak topology and Scott topology coincide on the
set of continuous valuations if one starts with a continuous
domain. Cohen, Escard\'o 
and Keimel further developed this construction in~\cite{cohen06},
where they employed the theory of \emph{topological cones} to retrieve
the definition and called the construction the \emph{extended
  probabilistic powerdomain} over $T_0$ spaces. They showed that the
extended probabilistic powerdomain construction is a monad over the
category of $T_0$ topological spaces and considered its algebras in
related categories in the same paper, leaving a conjecture that the
algebras of this monad on the category of stably compact spaces and
continuous functions are the stably compact locally convex topological
cones. Restricting this monad to the category of compact ordered
spaces (compact pospaces) and continuous monotone maps, Keimel located
the algebras of this monad to be the compact convex ordered sets
embeddable in locally convex ordered topological vector
spaces~\cite{keimel08a}.

\subsection*{Outline.}

We are concerned about the algebras of the extended probabilistic
powerdomain in the category of $T_0$ topological spaces and continuous
functions.  We recall some known facts about the extended
probabilistic powerdomain monad in Section~\ref{sec:V}, and on cones
in Section~\ref{cones}.
We prove in Section~\ref{algebras} that every algebra of this monad in
the category of $T_0$ spaces is a \emph{weakly locally convex sober
  topological cone}, and algebra morphisms must be continuous
\emph{linear maps}.  We then show the tight connection that there is
between algebras of the extended probabilistic powerdomain monad and
barycentres in a sense inspired from Choquet \cite{Choquet69}, and
already used by Cohen, Escard\'o and Keimel in \cite{cohen06}:
the 
structure maps of algebras map every continuous valuation to one of
its barycentres, and conversely, if barycentres are unique and the
barycentre map is continuous, then it is the structure map of an
algebra.  Moreover, on so-called \emph{locally linear} cones, the mere
existence of barycentres defines an algebra, and on convex-$T_0$
cones, all continuous linear maps are algebra morphisms.  Compared to
\cite{cohen06}, we do not need any stable compactness assumption, and
this is due to the Schr\"oder-Simpson theorem (see
Section~\ref{sec:riesz}).  We also isolate the new notion of local
linearity, which seems to have been overlooked in ibid.

In Section~\ref{fandp}, we consider two related probabilistic
powerdomain constructions, the \emph{simple valuation monad}~$\VF$ and
the \emph{point-continuous valuation monad}~$\VP$. Those were
initially considered by Heckmann in~\cite{heckmann96}. We fully
characterise the algebras of this two monads as weakly locally convex
topological cones and weakly locally convex sober topological cones,
respectively. In both cases, the algebra morphisms are shown to be
continuous linear maps between the corresponding algebras.  Those are
simple consequences of Heckmann's results.


\subsection*{Preliminaries.}

We use standard notions and notations in domain
theory~\cite{gierz03,abramsky94} and in non-Hausdorff
topology~\cite{goubault13a}. The category of topological spaces and
continuous functions is denoted by {\bf TOP}. For the convenience of
our discussion, we restrict ourselves to its full subcategory $\ctop$
of $T_0$ topological spaces. The category of dcpos and
Scott-continuous functions is denoted by~{\bf DCPO}. We use
$\mathbb R_+$ to denote the set of positive reals, and $\exr$ to
denote the positive reals extended with $\infty$. The extended
positive reals $\exr$ will play a vital role in our discussion.
Whenever $\exr$ is treated as a topological space, we mean that it is
equipped with the Scott topology until stated otherwise.

\section{The extended probabilistic powerdomain monad}
\label{sec:V}

\subsection{The extended probabilistic powerdomain functor}

\begin{definition}
  A \emph{valuation} on a topological space $(X,\mathcal OX)$ is a
  function~$\mu$ from $\mathcal OX$ to the extended positive reals
  $\exr$ satisfying for any $U, V\in \mathcal OX$:
  \begin{itemize}
  \item (strictness)  $\mu (\emptyset) = 0$;   
  \item (monotoncity) $\mu(U)\leq \mu(V)$ if $U\subseteq V$; 
  \item (modularity)
    $ \mu(U) + \mu(V) = \mu (U\cup V) +\mu (U\cap V)$.
  \end{itemize}

  A \emph{continuous valuation} $\mu$ on $(X, \mathcal OX)$ is a
  valuation that is Scott-continuous from $\mathcal OX$ to $\exr$,
  that is, for every directed family of open subsets $U_i, i\in I$, it
  holds:
  \begin{itemize}
  \item (Scott-continuity)
    $\mu (\bigcup_{i\in I}U_i) = \sup_{i\in I} \mu(U_i)$.
  \end{itemize}

  Valuations on the same topological space~$X$ are ordered by
  $\mu\leq \nu$ if and only if $\mu(U)\leq \nu(U)$ for all
  $U\in \mathcal OX$. The order is sometimes referred as the \emph
  {stochastic order}.

  The set of continuous valuations on $X$ with the stochastic order is
  denoted by~$\mathcal VX$.
\end{definition}

\begin{example}
  Let $X$ be a topological space, the \emph{Dirac mass} $\delta_x$ at
  $x\in X$ is defined by $\delta_x(U)=1$ if $x\in U$ and~$0$
  otherwise.  The Dirac mass $\delta_x$ is a continuous valuation
  on~$X$ for every $x\in X$.
\end{example}

\begin{example}\label{finitevaluations}
  Let $X$ be a topological space, The linear combinations
  $\sum_{i=1}^n r_i\delta_{x_i}$ of Dirac masses are also continuous
  valuations, where $r_i\in \mathbb R_+, x_i\in X$.  These valuations
  are called \emph{simple valuations}. The set of all simple
  valuations on~$X$ is denoted as $\VF X$.
\end{example}

\begin{example}
  Let $X$ be a topological space, $\mu, \nu$ be continuous valuations
  on~$X$ and $r, s\in \mathbb R_+$. The linear combinations
  $r\mu+s\nu$, defined as
  $(r\mu+s\nu)(U)= r\cdot \mu(U)+s\cdot \nu(U)$ for every open subset
  $U$, are again continuous valuations.
\end{example}

\begin{proposition}
  $\mathcal VX$ is a dcpo in the stochastic order.
\end{proposition}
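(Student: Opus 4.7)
The plan is to define the candidate supremum pointwise and then verify the three valuation axioms plus Scott-continuity. Given a directed family $(\mu_i)_{i \in I}$ in $\mathcal VX$, I would set
\[
  \mu(U) \eqdef \sup_{i \in I} \mu_i(U) \qquad \text{for every } U \in \mathcal OX,
\]
and argue that $\mu \in \mathcal VX$ and that it is the least upper bound in the stochastic order. Leastness is immediate: any upper bound $\nu$ satisfies $\mu_i(U) \leq \nu(U)$ for all $i$, so $\mu(U) \leq \nu(U)$.

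Strictness is automatic since each $\mu_i(\emptyset) = 0$. Monotonicity likewise follows by taking the supremum of the inequalities $\mu_i(U) \leq \mu_i(V)$. The substantive points are modularity and Scott-continuity. For modularity I would use the fact that addition in $\exr$ preserves \emph{directed} suprema in each argument; combined with the directedness of $(\mu_i)_{i \in I}$ (which ensures that $(\mu_i(U))_i$ and $(\mu_i(V))_i$ can be supped along a common cofinal subfamily), this gives
\[
  \mu(U) + \mu(V) = \sup_i \bigl(\mu_i(U) + \mu_i(V)\bigr) = \sup_i \bigl(\mu_i(U \cup V) + \mu_i(U \cap V)\bigr) = \mu(U \cup V) + \mu(U \cap V).
\]
For Scott-continuity, given a directed family $(U_j)_{j \in J}$ in $\mathcal OX$, I would exchange the two suprema:
\[
  \mu\Bigl(\bigcup_j U_j\Bigr) = \sup_i \mu_i\Bigl(\bigcup_j U_j\Bigr) = \sup_i \sup_j \mu_i(U_j) = \sup_j \sup_i \mu_i(U_j) = \sup_j \mu(U_j),
\]
where the inner equality uses that each $\mu_i$ is itself Scott-continuous and the swap of suprema is valid because both indexing families are directed (monotone double families commute freely).

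The only genuine subtlety is the modularity step, which rests on the Scott-continuity of addition on $\exr$ for directed suprema; this is a standard fact about the arithmetic of extended non-negative reals, so I expect no real obstacle, merely a careful invocation of directedness to legitimately pull the sup through the sum.
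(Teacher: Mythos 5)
Your proposal is correct and follows exactly the standard route that the paper (which only sketches the argument and defers to \cite[Lemma IV-9.8]{gierz03} and \cite[Section 3.2.(5)]{heckmann96}) has in mind: define the supremum pointwise, check the valuation axioms, and use Scott-continuity of addition on $\exr$ over directed families for modularity. Nothing further is needed.
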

\begin{proof}
  For a directed family of continuous valuations $\mu_i, i\in I$, and
  any open subset $U\subseteq X$, define
  $(\sup_{i\in I}\mu_i) (U)=\sup_{i\in I} \mu_i(U)$. One verifies that
  $\sup_{i\in I}\mu_i$ is another continuous valuation. See
  \cite[Lemma IV-9.8]{gierz03} and \cite[Section 3.2.(5)]{heckmann96}
  for details.
\end{proof}

We can extend $\mathcal V$ to a functor from the category of
topological spaces and continuous functions to the category of dcpos
and Scott-continuous functions by using the following proposition.

\begin{proposition}
  Let $f\colon  X\to Y$ be a continuous function between topological
  spaces~$X$ and~$Y$, and $\mu$ be any continuous valuation
  on~$X$. Then the map
  $\mathcal V f\colon \mu\mapsto (U\in \mathcal OY\mapsto \mu(f^{-1}(U)))$
  is Scott-continuous from $\mathcal VX$ to~$\mathcal V Y$.
\end{proposition}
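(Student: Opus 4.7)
The plan is to verify in order (i) that for each fixed $\mu \in \mathcal V X$ the assignment $U \mapsto \mu(f^{-1}(U))$ really is a continuous valuation on $Y$, and then (ii) that the induced map $\mathcal V f$ preserves directed suprema in the stochastic order. The continuity of $f$ will be used only to ensure that $f^{-1}(U)$ lies in $\mathcal OX$ for every $U \in \mathcal OY$, so that $\mu$ can be applied to it.

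For (i) the four axioms will be handled by transporting them across $f^{-1}$, which commutes with $\emptyset$, binary unions, binary intersections, and arbitrary directed unions, and is monotone with respect to inclusion. Concretely: strictness follows from $f^{-1}(\emptyset) = \emptyset$; monotonicity because $U \subseteq V$ implies $f^{-1}(U) \subseteq f^{-1}(V)$; modularity from $f^{-1}(U \cup V) = f^{-1}(U) \cup f^{-1}(V)$ and $f^{-1}(U \cap V) = f^{-1}(U) \cap f^{-1}(V)$ together with modularity of $\mu$; and Scott-continuity from the fact that $f^{-1}$ turns a directed family $(U_i)_{i \in I}$ into the directed family $(f^{-1}(U_i))_{i \in I}$ in $\mathcal OX$ with $f^{-1}(\bigcup_i U_i) = \bigcup_i f^{-1}(U_i)$, combined with Scott-continuity of $\mu$ on $\mathcal OX$.

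For (ii), let $(\mu_i)_{i \in I}$ be a directed family in $\mathcal V X$. Using the description of directed suprema in $\mathcal V X$ recalled above (namely, pointwise on opens), I compute for every $U \in \mathcal OY$:
\[
  \mathcal V f\bigl(\dsup_{i \in I} \mu_i\bigr)(U)
  = \bigl(\dsup_{i \in I} \mu_i\bigr)(f^{-1}(U))
  = \dsup_{i \in I} \mu_i(f^{-1}(U))
  = \dsup_{i \in I} \mathcal V f(\mu_i)(U),
\]
and since suprema in $\mathcal V Y$ are also taken pointwise, this says $\mathcal V f(\dsup_i \mu_i) = \dsup_i \mathcal V f(\mu_i)$.

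There is no serious obstacle: the argument is a direct unfolding of definitions, relying only on the standard Boolean-algebra properties of preimages and on the fact that directed suprema in $\mathcal V$ are computed pointwise. The only point that needs the continuity hypothesis on $f$ (as opposed to mere set-theoretic properties of $f^{-1}$) is to know that $f^{-1}(U) \in \mathcal OX$ so that $\mu(f^{-1}(U))$ is meaningful.
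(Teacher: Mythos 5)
Your proof is correct and is exactly the direct unfolding of definitions that the paper has in mind (its own proof is just ``Straightforward''): preimages commute with $\emptyset$, finite unions/intersections, and directed unions, which gives the valuation axioms and Scott-continuity of $\mathcal V f(\mu)$, and the pointwise computation of directed suprema in $\mathcal V X$ and $\mathcal V Y$ gives Scott-continuity of $\mathcal V f$ itself. Nothing to add.
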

\begin{proof}
  Straightforward. 
\end{proof}

\begin{corollary}
  $\mathcal V$ is a functor from the category {\bf TOP} to the
  category~{\bf DCPO}.
\end{corollary}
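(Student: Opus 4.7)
The plan is to verify the two functoriality axioms, since the object-part and morphism-part have already been handled: the preceding proposition establishes that $\mathcal V X$ is a dcpo for every $T_0$ space~$X$, and the proposition immediately before the corollary establishes that $\mathcal V f \colon \mathcal V X \to \mathcal V Y$ is Scott-continuous for every continuous $f \colon X \to Y$. So all that remains is to show that $\mathcal V$ preserves identities and composition.

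First, for the identity $\id_X \colon X \to X$, I would unfold the definition: for every $\mu \in \mathcal V X$ and every $U \in \mathcal O X$, we have $\mathcal V(\id_X)(\mu)(U) = \mu(\id_X^{-1}(U)) = \mu(U)$, so $\mathcal V(\id_X) = \id_{\mathcal V X}$.

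Next, for composition, given continuous maps $f \colon X \to Y$ and $g \colon Y \to Z$, I would take an arbitrary $\mu \in \mathcal V X$ and an arbitrary $U \in \mathcal O Z$ and compute both sides pointwise. Using $(g \circ f)^{-1}(U) = f^{-1}(g^{-1}(U))$ and the definition of~$\mathcal V$ on morphisms, we get
\[
\mathcal V(g \circ f)(\mu)(U) = \mu\bigl((g \circ f)^{-1}(U)\bigr) = \mu\bigl(f^{-1}(g^{-1}(U))\bigr) = \mathcal V f(\mu)(g^{-1}(U)) = \bigl(\mathcal V g \circ \mathcal V f\bigr)(\mu)(U).
\]
Since this holds for every $U$, we get $\mathcal V(g \circ f) = \mathcal V g \circ \mathcal V f$.

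There is no real obstacle here; the corollary is a routine bookkeeping consequence of the preceding proposition, together with the elementary fact that taking preimages is contravariantly functorial. The only care needed is to observe that we are working in $\ctop$, so the spaces involved are tacitly $T_0$, but since the definition of $\mathcal V$ and the verification of the functor axioms do not use the $T_0$ separation property, this plays no role in the argument.
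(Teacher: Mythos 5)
Your proof is correct and matches what the paper leaves implicit under ``Straightforward'': the object and morphism parts are supplied by the two preceding propositions, and the functor axioms reduce to the contravariant functoriality of preimages. (One minor note: the corollary is stated for all of {\bf TOP}, not just $\ctop$, but as you observe the $T_0$ hypothesis plays no role anyway.)
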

\begin{proof}
  Straightforward. 
\end{proof}

There is a canonical functor $\Sigma$ from the category {\bf DCPO} to
${\bf TOP}$, namely, the Scott-space construction. For any dcpo $L$,
$\Sigma L$ is the topological space $(L, \sigma L)$, where $\sigma L$
is the Scott topology on~$L$; and for any Scott-continuous function
$f\colon  L\to M $, $\Sigma f=f$ is continuous from $\Sigma L$ to
$\Sigma M$.

Post-composing the functor $\Sigma$ with $\mathcal V$, one obtains an
endofunctor $\Sigma\circ \mathcal V$ over the category~{\bf TOP}, we
denote it by $\mathcal V_s$. Pre-composing the functor $\Sigma$ with
$\mathcal V$, however, yields an endofunctor $\mathcal V\circ \Sigma$
over the category~{\bf DCPO}, we denote it by $\mathcal V_d$.

In her 
PhD thesis~\cite{jones90}, Jones showed that $\mathcal V_d$ is a monad
over the category~{\bf DCPO}, and moreover, $\mathcal V_d$ can be
restricted to the full subcategory of continuous domains. She then
used this monad to model probabilistic side effects of programming
languages.

Naturally, one wonders whether $\mathcal V_s$ is a monad on~{\bf
  TOP}.  Unfortunately, this is not the case in general. The problem is
that the topology $\mathcal OX$ is, in general, too sparse to
sufficiently restrict the Scott topology on $\mathcal
VX$. Alternatively, one considers the \emph{weak topology}
on~$\mathcal VX$, and we will see this is the right topology on
$\mathcal VX$ that gives rise to a monad structure.

\begin{definition} {\rm\cite[Satz 8.5]{kirch93}}
  For a topological space $X$, the \emph{weak topology} on
  $\mathcal VX$ is generated by a subbasis of sets of the form
  $[U>r], U\in \mathcal OX, r\in \mathbb R_+$, where $[U>r]$ denotes
  the set of continuous valuations $\mu$ such that $\mu(U)>r$. We use
  $\mathcal V_wX$ to denote the space $\mathcal VX$ equipped with the
  weak topology and call $\V X$ the \emph{extended probabilistic
    powerdomain} or the \emph{valuation powerdomain} over~$X$.
\end{definition}

Analogously, we can extend $\mathcal V_w$ into a functor
on~${\bf TOP}$ by defining its actions $\mathcal V_w f$ on continuous
maps $f \colon X\to Y$ by $\mathcal V_wf(\mu)(V)=\mu(f^{-1}(V))$.

\begin{proposition}
  \label{prop:V:functor}
  $\mathcal V_w$ is an endofunctor on the category~{\bf TOP}.
\end{proposition}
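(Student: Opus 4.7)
The plan is to verify three things in turn: (a) that $\mathcal V_w f(\mu)$ actually lies in $\mathcal V Y$ for every $\mu \in \mathcal V X$, so that $\mathcal V_w f$ is a well-defined set map; (b) that $\mathcal V_w f$ is continuous with respect to the weak topologies; and (c) that $\mathcal V_w$ preserves identities and composition, which is what genuinely makes it a functor.

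For (a), I would use the fact that the preimage operation $f^{-1}\colon \mathcal O Y \to \mathcal O X$ commutes with finite intersections and arbitrary unions and sends $\emptyset$ to $\emptyset$. Hence $\mathcal V_w f(\mu) = \mu \circ f^{-1}$ inherits strictness, monotonicity, modularity, and Scott-continuity from $\mu$, so it is a continuous valuation on $Y$.

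For (b), which is the core of the proposition, I would use the subbasic description of the weak topology: to show continuity it suffices to pull back every subbasic open $[V>r]$ with $V \in \mathcal O Y$ and $r \in \mathbb R_+$ to an open set. A direct unwinding of the definitions gives
\[
  (\mathcal V_w f)^{-1}([V>r]) \;=\; \{\mu \in \mathcal V X \mid \mu(f^{-1}(V)) > r\} \;=\; [f^{-1}(V) > r],
\]
which is subbasic open in $\mathcal V_w X$ precisely because $f^{-1}(V)$ is open in $X$ by continuity of $f$. This is really the raison d'être of the weak topology, so there is no genuine obstacle: the subbasis was engineered to make this step trivial.

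For (c), functoriality is a pointwise calculation: $\mathcal V_w(\mathrm{id}_X)(\mu)(U) = \mu(\mathrm{id}_X^{-1}(U)) = \mu(U)$, so $\mathcal V_w(\mathrm{id}_X) = \mathrm{id}_{\mathcal V_w X}$; and for composable continuous maps $f\colon X \to Y$, $g\colon Y \to Z$ and any $W \in \mathcal O Z$, $\mathcal V_w(g \circ f)(\mu)(W) = \mu(f^{-1}(g^{-1}(W))) = \mathcal V_w g(\mathcal V_w f(\mu))(W)$, yielding $\mathcal V_w(g \circ f) = \mathcal V_w g \circ \mathcal V_w f$. Since every step is routine, I expect no real difficulty; if anything merits attention it is simply being careful to cite the subbasic description of the weak topology when doing the continuity check in (b).
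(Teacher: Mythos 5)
Your proof is correct and its core step (b) is exactly the computation the paper uses, namely $(\mathcal V_w f)^{-1}([V>r]) = [f^{-1}(V)>r]$ on subbasic opens; the paper simply treats your parts (a) and (c) as already established or routine. No differences worth noting.
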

\begin{proof}
  The main thing is to check that $\V f$ is continuous for every
  continuous map $f \colon X \to Y$.  For every open subset $V$ of
  $Y$, for every $r \in \Rplus \setminus \{0\}$,
  $(\V f)^{-1} ([V > r]) = \{\mu \in \mathcal VX \mid \mu (f^{-1} (V))
  > r\} = [f^{-1} (V) > r]$.
\end{proof}

\subsection{Integral with respect to continuous valuations}

Continuous valuations are variations on the idea of measure. While
measures allow one to integrate measurable functions, continuous
valuations allow one to integrate lower semi-continuous functions.  A
\emph{lower semi-continuous function} from a topological space to
$\exr$ is the same thing as a continuous function from $X$ to $\exr$,
where the latter is equipped with the Scott topology.  We
write $\mathcal LX$ for the set of lower semi-continuous functions from
$X$ to $\exr$.

For any topological space~$X$, every lower semi-continuous function
$h\colon X\to \exr$ has a Choquet type \emph{integral} with respect to
a continuous valuation~$\mu$ on~$X$ defined by:
$$\int_{x\in X} h(x)d\mu = \int_{0}^\infty \mu (h^{-1}(r, \infty])dr,$$
where the right side of the equation is a Riemann integral.  If no
risk of confusion occurs, we usually write $\int_{x\in X} h(x)d\mu$ as
$\int h~d\mu$. For the discussion that follows, we collect some
properties of this integral, and readers are referred
to~\cite{kirch93,tix95,lawson04} for more details.

\begin{lemma} \label{propertyofint}
  \begin{enumerate}
  \item For every simple valuation
    $\mu = \sum_{i=1}^n r_i\delta_{x_i}$,
    $\int h~d\mu = \sum_{i=1}^n h(x_i)$. In particular, for the Dirac
    mass $\delta_x$, $\int h~d \delta_x=h(x)$.
  \item For all lower semi-continuous functions $h, k \colon X\to \exr$ and
    $r, s\in \mathbb R_+$,
    $\int (r h+s k)~d\mu= r\int h~d\mu +s\int k~d\mu$.
  \item For every directed family (in the pointwise order) of lower
    semi-continuous functions $h_a \colon X \to \exr, a\in A$, we have
    that $\int (\sup_{a\in A}h_a)~d\mu= \sup_{a\in A}\int h_a~d\mu$.
  \item For every open set~$U$, $\int \chi_U~d\mu = \mu(U)$, here
    $\chi_U$ is the characteristic function of~$U$ defined as
    $\chi_U(x)=1$ when $x\in U$ and $0$ otherwise.
  \item For all continuous valuations $\mu, \nu\in \mathcal V_wX$ and
    $r, s\in \mathbb R_+$, for every lower semi-continuous function
    $f \colon X \to \exr$,
    $\int f~d(r\mu+s\nu) = r\int f~d\mu + s\int f~d\nu$.
  \item Let $f\colon  X\to Y$ be a continuous map, $\mu$ be a continuous
    valuation on~$X$, and $g\colon Y \to \exr$ be a lower semi-continuous
    function. Then
    $\int_{y\in Y} g(y)d\mathcal V_wf(\mu) = \int_{x\in X} (g\circ
    f)(x) d\mu$.
  \end{enumerate}
\end{lemma}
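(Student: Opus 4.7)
The plan is to work straight from the Choquet-style definition $\int h\,d\mu = \int_0^\infty \mu(h^{-1}(r,\infty])\,dr$. Items 4, 5, and 6 fall out by direct computation, item 3 follows from Scott-continuity of $\mu$, the Dirac case of item 1 is immediate, and the full item 1 is then a consequence of items 2 and 5. The bulk of the work is item 2, the additivity of the integral in the integrand, which I handle by a standard approximation argument whose base case encodes the modularity axiom.

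For item 4 the set $\chi_U^{-1}(r,\infty]$ equals $U$ on $0<r<1$ and is empty for $r\geq 1$, so the integral collapses to $\mu(U)$. For item 5 the identity $(r\mu+s\nu)(W)=r\mu(W)+s\nu(W)$ holds for every open $W$ by definition of the linear combination, so the claim reduces to linearity of the Riemann integral on $\Rplus$. For item 6 the pointwise equality $\V f(\mu)(g^{-1}(r,\infty]) = \mu((g\circ f)^{-1}(r,\infty])$ is immediate from the definition of $\V f$, and integrating over $r$ gives the two sides. For the Dirac case inside item 1, $\delta_x(h^{-1}(r,\infty])=1$ exactly when $r<h(x)$, so the Riemann integral evaluates to $h(x)$.

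The main obstacle is item 2. I follow the classical ladder used in measure theory, adapted to open sets and lower semi-continuous functions. First I prove $\int(\chi_U+\chi_V)\,d\mu = \int\chi_U\,d\mu + \int\chi_V\,d\mu$; this is the point at which modularity is needed, because pointwise $\chi_U+\chi_V = \chi_{U\cup V} + \chi_{U\cap V}$, so both sides evaluate, via item 4, to $\mu(U)+\mu(V) = \mu(U\cup V)+\mu(U\cap V)$. Together with scalar homogeneity, which reduces to the substitution $r\mapsto r/\lambda$ in the Riemann integral, this gives the identity on finite non-negative linear combinations of characteristic functions of opens, which play the role of step functions in this setting. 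Next, every lower semi-continuous $h\colon X\to\exr$ is a directed supremum of such step functions, obtained by partitioning $\Rplus$ along positive rationals $q$ and using the opens $h^{-1}(q,\infty]$; a short verification shows $rh+sk$ is approximated by the corresponding sums of step functions, so additivity lifts from step functions to all pairs $(h,k)$ once one has item 3.

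Item 3 itself rests on Scott-continuity of $\mu$: for a directed family $(h_a)_{a\in A}$, the set $(\sup_a h_a)^{-1}(r,\infty]$ is the directed union of the $h_a^{-1}(r,\infty]$, so $\mu$ of it is the sup of $\mu(h_a^{-1}(r,\infty])$, and this sup commutes with the Riemann integral of a directed family of non-decreasing non-negative functions of $r$. Item 1 in its general form then follows by induction on $n$, combining the Dirac case with item 2 and homogeneity from item 5.
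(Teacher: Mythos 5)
Your outline is correct, but note that the paper does not actually prove this lemma: it states the properties and refers the reader to Kirch, Tix and Lawson, so there is no in-paper argument to compare against. What you have written is essentially the standard development from those references: the one-line computations for items (iv), (v), (vi) and the Dirac case of (i) are exactly right, item (iii) is Scott-continuity of $\mu$ applied to the directed union $(\sup_a h_a)^{-1}((r,\infty]) = \bigcup_a h_a^{-1}((r,\infty])$ followed by a monotone-convergence step for the outer Riemann integral, and you have correctly located the single place where modularity enters, namely the pointwise identity $\chi_U+\chi_V=\chi_{U\cup V}+\chi_{U\cap V}$, whose level sets at heights in $(0,1)$ and $[1,2)$ are $U\cup V$ and $U\cap V$ respectively.

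Two caveats. The thinnest step is the passage from the two-set identity to additivity on arbitrary step functions $\sum_i a_i\chi_{U_i}$: this is not a formal consequence of the base case plus homogeneity, but needs an induction whose inductive step (evaluating $\int (a\chi_U+g)\,d\mu$ against a step function $g$) is itself a modularity computation. The usual route is to evaluate the integral directly from the definition on \emph{staircase} combinations $\sum_i a_i\chi_{U_i}$ with $U_1\supseteq\cdots\supseteq U_n$, and then to show by repeated modularity that every finite combination of characteristic functions of opens can be rewritten in staircase form; this is where most of the work in Tix's treatment lies, and your sketch elides it. (The subsequent lift to all lower semi-continuous $h,k$ via $rh+sk=\sup_N(rh_N+sk_N)$ and item (iii) is fine, and there is no circularity since (iii) does not depend on (ii).) Minor points: in item (iii) the functions $r\mapsto\mu(h_a^{-1}((r,\infty]))$ are non-increasing in $r$ (the family is directed in $a$, not in $r$); and for the general case of item (i) the ingredient you need is item (v), linearity in the valuation, which gives $\int h\,d\mu=\sum_{i=1}^n r_i h(x_i)$ --- this is the correct formula, the coefficients $r_i$ being missing from the statement as printed.
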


Those properties imply a form of the Riesz representation theorem for
continuous valuations~\cite{kirch93}. It states that integrating with
respect to a continuous valuation $\nu$ defines a lower semi-continuous
linear functional $f\mapsto \int f~d\nu$ on $\mathcal LX$ and that,
conversely, for every lower semi-continuous linear functional $\phi$ on
$\mathcal LX$, there is a unique continuous valuation $\nu$
representing $\phi$, in the sense that $\phi(f) = \int f~d\nu$ for
every $f \in \mathcal LX$, and $\nu$ is given by
$\nu (U) = \phi (\chi_U)$ for every open set $U$.

For all $h\in \mathcal LX$ and $r\in \mathbb R_+$, we define
$[h>r]=\set{\mu\in \mathcal V_wX}{\int h~d\mu >r}$. It is routine to
check that $[h> r]$ are open in the weak topology of $\mathcal V_w
X$. They also form a subbase of the weak topology, as
$[U>r]=[\chi_U>r]$.

\subsection{The monad structure}

Using integration, we now argue that $\mathcal V_w$ defines a monad on
the category~{\bf TOP}. Recall that a monad on a category~{\bf C}
consists of an endofunctor~$T\colon  {\bf C}\to {\bf C}$ together with two
natural transformations: $\eta\colon  1_C\to T$ (where $1_{C}$ denotes the
identity functor on {\bf C}) and $m\colon  T^2\to T$, satisfying the
equalities $m \circ Tm =m \circ m T$ and
$m \circ T\eta =m \circ \eta T=1_{T}$. The natural transformations
$\eta$ and $m$ are called the \emph{unit} and the
\emph{multiplication} of the monad, respectively.  Alternatively, one
can use the following equivalent description, due to Manes.

\begin{definition}{\rm \cite{manes76}}
  A \emph{monad} on a category {\bf C} is a triple
  $(T , \eta, \_^\dagger)$ consisting of a map $T$ from objects $X$
  of~{\bf C} to objects $T X$ of {\bf C}, a collection
  $\eta = (\eta_X)_X$ of morphisms $\eta_X \colon  X \to TX$, one for each
  object $X$ of {\bf C}, and a so-called extension operation
  $\_^\dagger$ that maps every morphism $ f \colon  X\to TY$ to
  $f ^\dagger \colon  T X\to T Y$ such that:
  \begin{enumerate}
  \item $\eta_X^\dagger = \id_{TX}$;
  \item for every morphism $f \colon X\to TY$,
    $ f^\dagger\circ \eta_X = f$;
  \item for all morphisms $f \colon X\to TY$ and $g \colon Y\to TZ$,
    $g^\dagger\circ f^\dagger = (g^\dagger\circ f)^\dagger$.
\end{enumerate}
\end{definition}

The advantage of this definition is that one does not even need to
verify that $T$ is a functor before checking it is a monad. In fact
every monad defined in this sense gives rise to an endofunctor, by
defining its action on morphisms $f \colon X\to Y$ as
$Tf = (\eta_Y\circ f)^\dagger$. The unit of the monad $\eta$ is given
by $(\eta_X)_X$ and the multiplication~$m$ is given by
$m_X= \id_{TX}^\dagger$ for every object $X$ in~{\bf C}.

The following is folklore, and is implicit in \cite{cohen06}, for
example. 

\begin{proposition}\label{folklore}
  The functor $\mathcal V_w$ is a monad on the category~{\bf TOP}. The
  unit $\eta$ is given by $\eta_X \colon x\mapsto \delta_x$ for
  every~$X$, and for continuous function
  $f \colon X\to \mathcal V_wY $ the extension operation is given by
  $$f^\dagger (\mu)(U) = \int_{x\in X} f(x)(U)d\mu.$$
  For every lower semi-continuous function $h \colon Y \to \exr$, the
  following disintegration formula holds:
  \begin{align}
    \label{eq:disint}
    \int_{y\in Y}h(y)d f^\dagger(\mu)
    & = \int_{x\in X}\left(\int_{y\in Y}h(y)df(x) \right)d\mu.
  \end{align}
  In particular, the function $x\mapsto \int_{y\in Y}h(y)df(x)$ is
  lower semi-continuous.
\end{proposition}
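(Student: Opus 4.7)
The plan is to invoke Manes's formulation of a monad, which means I must verify: (a) $\eta_X$ is continuous from $X$ to $\V X$; (b) for every continuous $f\colon X\to \V Y$, the set-function $f^\dagger(\mu)$ is a continuous valuation on~$Y$ and $f^\dagger\colon \V X\to \V Y$ is continuous; and (c) the three Manes equations $\eta_X^\dagger=\id_{\V X}$, $f^\dagger\circ\eta_X=f$, and $g^\dagger\circ f^\dagger=(g^\dagger\circ f)^\dagger$. The technical core from which almost everything will follow is the disintegration formula~(\ref{eq:disint}); I would establish it first, together with the lower semi-continuity claim, and then obtain the monad axioms and the continuity of $f^\dagger$ as short consequences, using the Riesz representation theorem for continuous valuations recorded after Lemma~\ref{propertyofint}.

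To establish~(\ref{eq:disint}) and the in-passing claim, fix $\mu\in\V X$. The key preliminary observation is that for every $h\in\Lform Y$, the map $x\mapsto \int_{y\in Y} h(y)\,df(x)$ is lower semi-continuous on~$X$: since the subbasic opens of the weak topology on $\V Y$ include all sets $[h>r]$ (as noted right after Lemma~\ref{propertyofint}) and $f$ is continuous, the preimage $f^{-1}([h>r])=\{x\mid \int h\,df(x)>r\}$ is open. Hence the outer integral in~(\ref{eq:disint}) is well defined. Define $L_\mu\colon \Lform Y\to\exr$ by $L_\mu(h)=\int_{x\in X}\bigl(\int_{y\in Y} h(y)\,df(x)\bigr)d\mu$. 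Using parts~(2) and~(3) of Lemma~\ref{propertyofint} applied to both the inner and the outer integrals, $L_\mu$ is linear and Scott-continuous. Riesz representation then supplies a unique continuous valuation $\nu_\mu\in\V Y$ with $\int h\,d\nu_\mu=L_\mu(h)$ for all $h\in\Lform Y$. Specialising to $h=\chi_U$ and applying Lemma~\ref{propertyofint}(4) yields $\nu_\mu(U)=\int f(x)(U)\,d\mu=f^\dagger(\mu)(U)$, so $f^\dagger(\mu)=\nu_\mu\in\V Y$ and~(\ref{eq:disint}) holds by construction.

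Granted~(\ref{eq:disint}), the remaining items reduce to brief checks. Continuity of $f^\dagger$ follows because $(f^\dagger)^{-1}([V>r])=\{\mu\mid \int f(x)(V)\,d\mu>r\}$ is the subbasic open $[x\mapsto f(x)(V)>r]$, the inner map being lower semi-continuous as just shown. Continuity of $\eta_X$ follows from $\eta_X^{-1}([U>r])=U$ for $r<1$ and $=\emptyset$ for $r\geq 1$. The identities $\eta_X^\dagger(\mu)(U)=\int\chi_U\,d\mu=\mu(U)$ and $f^\dagger(\delta_x)(U)=\int f(y)(U)\,d\delta_x=f(x)(U)$ come directly from Lemma~\ref{propertyofint}(1) and~(4). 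Associativity follows by two applications of~(\ref{eq:disint}) to an arbitrary open $W\subseteq Z$:
\[
g^\dagger(f^\dagger(\mu))(W)=\int g(y)(W)\,df^\dagger(\mu)=\int\Bigl(\int g(y)(W)\,df(x)\Bigr)d\mu=\int (g^\dagger\circ f)(x)(W)\,d\mu,
\]
which is $(g^\dagger\circ f)^\dagger(\mu)(W)$.

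The main obstacle is precisely the first step: establishing lower semi-continuity of $x\mapsto\int h\,df(x)$ for arbitrary $h\in\Lform Y$. One might fear this requires more than the definition of the weak topology, but it is in fact immediate from the sub-basicness of the sets $[h>r]$ in $\V Y$. Once that is in place, Riesz representation packages well-definedness of $f^\dagger(\mu)$ and the disintegration formula into one principle, and the remaining verifications are purely formal applications of Lemma~\ref{propertyofint}.
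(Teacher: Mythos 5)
Your proposal is correct and follows essentially the same route as the paper: establish the disintegration formula and the lower semi-continuity of $x\mapsto\int h\,df(x)$ first, package well-definedness of $f^\dagger(\mu)$ via the Riesz representation, and then verify the Manes equations formally. The only difference is that you obtain the key lower semi-continuity by citing the earlier remark that $[h>r]$ is weak-open for every $h\in\Lform Y$ (legitimately, since the paper asserts this before the proposition), whereas the paper re-derives that fact inline via the step-function approximation $h_N=\frac 1{2^N}\sum_{k}\chi_{h^{-1}(k/2^N,\infty]}$.
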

\begin{proof}
  The map $x \in X \mapsto f (x) (U)$ is continuous for every open set
  $U$ by the definition of the weak topology, hence the formula makes
  sense.  We directly prove the last claim.  Let us assume that
  $x \in X$ satisfies $\int_{y\in Y}h(y)df(x) > r$, where
  $r \in \Rplus \setminus \{0\}$.  The function $h$ is the supremum of
  the countable chain of maps $h_N$, defined as
  $\frac 1 {2^N} \sum_{k=1}^{N2^N} \chi_{h^{-1} (k/2^N, \infty]}$, so
  $\int_{y \in Y} h_N (y) df (x) > r$ for some $N \in \nat$.  Let us
  write $h_N$ as $\epsilon \sum_{k=1}^n \chi_{U_k}$ (a so-called
  \emph{step function}), where $\epsilon >0$ and each $U_k$ is open,
  to avoid irrelevant details.  Then
  $\epsilon \sum_{k=1}^n f (x) (U_k) > r$, so there are numbers
  $r_k \in \Rplus \setminus \{0\}$ such that $f (x) (U_k) > r_k$ for
  each $k$ and $\epsilon \sum_{k=1}^n r_k \geq r$.  Then
  $\bigcap_{k=1}^n f^{-1} ([U_k > r_k])$ is an open neighbourhood of
  $x$, and
  $\int_{y \in Y} h (y) d f (x') \geq \int_{y \in Y} h_N (y) d f (x')
  = \epsilon \sum_{k=1}^n f (x') (U_k) > r$ for every $x'$ in that
  neighbourhood.

  Let us define $\Lambda (h)$ as
  $\int_{x\in X}\left(\int_{y\in Y}h(y)df(x) \right)d\mu$ for every
  $h \in \Lform X$, which now makes sense.  It is easy to see that
  $\Lambda$ is linear and lower semi-continuous, hence there is a
  unique continuous valuation $\nu$ such that
  $\Lambda (h) = \int h~d\nu$ for every $h \in \Lform Y$.  We have
  $\nu (U) = \Lambda (\chi_U)$, and this gives us back the definition
  of $f^\dagger (\mu) (U)$.

  It remains to check the monad equations.  That could be done as in
  \cite{kirch93}, but Manes' formulation makes it easier.  Equations
  (i) and (ii) are immediate.  For (iii), we have:
  \begin{align*}
    (g^\dagger\circ f^\dagger) (\mu) (U)
    & = \int_{y \in Y} g (y) (U) df^\dagger (\mu) \\
    & = \int_{x \in X} \left(\int_{y \in Y} g (y) (U) df (x)\right) d\mu
    & \text{by (\ref{eq:disint})},
  \end{align*}
  while:
  \begin{align*}
    (g^\dagger \circ f)^\dagger (\mu) (U)
    & = \int_{x \in X} g^\dagger (f (x)) (U) d\mu \\
    & = \int_{x \in X} \left(\int_{y \in Y} g (y) (U) df (x)\right) d\mu
    & \text{by definition.}
  \end{align*}
\end{proof}

\begin{remark}
  For a topological space $X$, the multiplication $m_X$ of the monad
  at $\mathcal V_wX$ sends every continuous valuation
  $ \varpi \in \mathcal V_w(\mathcal V_wX)$ to
  $\id_{\V X}^\dagger (\varpi)= (U\mapsto \int_{\mu\in \V
    X}\mu(U)d\varpi)$. In particular, for any continuous valuation
  $\mu\in \mathcal V_wX$, $m_X(\delta_\mu)=\mu$.
\end{remark}


In this paper, we are mainly interested in the Eilenberg-Moore
algebras of the valuation powerdomain monad over the
category~$\ctop$. Recall that an \emph{algebra of a monad}
$\mathcal T$ over category {\bf C} is a pair $(A, \alpha)$, where $A$
is an object in {\bf C} and $\alpha_A\colon \mathcal TA\to A$ is a
morphism of~{\bf C}, called the \emph{structure map}, such that
$\alpha_A \circ \eta_A=\id_A$ and
$\alpha_A\circ m_A=\alpha_A\circ \mathcal T\alpha_A$. A morphism
$f\colon A\to B$ in~{\bf C} is called a \emph{$\mathcal T$-algebra
  morphism} if $f\circ \alpha_A=\mathcal Tf\circ \alpha_B$.

From the basic theory of algebras of monads, we know that in
particular the pair $(\mathcal T A, m_A)$ is an algebra of~$T$, where
$m$ is the multiplication of $\mathcal T$. In our case,
$(\mathcal V_w X, m_X)$ is a $\mathcal V_w$-algebra for every
topological space~$X$. In order to locate all the algebras, let us
first examine the structure of $\mathcal V_wX$ for an arbitrary
topological space~$X$. We will see that $\mathcal V_wX$ is a
\emph{topological cone} as defined below.


\section{Cones}\label{cones}

\subsection{Topological, locally convex, and locally linear cones}

The following notions are from \cite{Keimel:topcones2}.
\begin{definition}
  A \emph{cone} is a commutative monoid~$C$ together with a scalar
  multiplication by nonnegative real numbers satisfying the same
  axioms as for vector spaces; that is, $C$ is endowed with an
  addition $(x, y)\mapsto x+y \colon C\times C\to C$ which is
  associative, commutative and admits a neutral element~$0$, and with
  a scalar multiplication
  $(r, x)\mapsto r \cdot x \colon \mathbb R_+ \times C\to C$
  satisfying the following axioms for all $x, y\in C$ and all
  $r, s\in \exr$:
  \begin{align*}
    &r\cdot (x+y) = r\cdot x+r\cdot y         & &  (rs)\cdot x=r\cdot (s\cdot x)        &  0\cdot x = 0 \\
    & (r+s)\cdot x = r\cdot x+s\cdot x       & &1\cdot x=x                                     &r\cdot 0 = 0
  \end{align*}

  We shall often write $rx$ instead of $r \cdot x$ for
  $r\in \mathbb R_+$ and $x\in C$.

  A \emph{semitopological cone} is a cone with a $T_0$ topology that
  makes $+$ and $\cdot$ separately continuous.

  A \emph{topological cone} is a cone with a $T_0$ topology that makes
  $+$ and $\cdot$ jointly continuous.
\end{definition}

\begin{remark}
  \label{rem:ershov}
  If $\cdot$ is separately continuous, it is automatically jointly
  continuous \cite[Corollary~6.9~(a)]{Keimel:topcones2}.  This is a
  consequence of a theorem due to Ershov
  \cite[Proposition~2]{Ershov:aspace:hull}, which states that every
  separately continuous map from $X \times Y$ to $Z$ where $X$ is a
  c-space (in particular, a continuous poset in its Scott topology)
  and $Y$ and $Z$ are arbitrary spaces is jointly continuous.
\end{remark}

\begin{definition}
  A function $f \colon C\to D$ from cone $C$ to $D$ is \emph{linear}
  if and only if for all $r, s\in \mathbb R_+$ and $x, y\in C$,
  $f(rx+sy)= rf(x)+sf(y)$.
\end{definition}

\begin{example}
  The extended reals $\exr$ is a topological cone in the Scott
  topology, with the usual addition and multiplication extended with
  $r+\infty= \infty+r =\infty$ for all $r\in \exr$,
  $0\cdot\infty = \infty\cdot 0=0$, and
  $r\cdot \infty=\infty\cdot r=\infty$ for $r\not= 0$.
\end{example}

\begin{example}
  \label{exa:LX:top}
  For any topological space~$X$, $\mathcal LX$ is a cone with the
  pointwise addition and multiplication. It is a semitopological cone
  with the Scott topology induced by the pointwise order.  It is a
  topological cone if $X$ is core-compact (i.e., if $\Open X$ is a
  continuous lattice).  Indeed, in that case $\mathcal LX$ is also a
  continuous lattice; this can be obtained from
  \cite[Proposition~II-4.6]{gierz03} and the fact that $\creal$ is a
  continuous lattice.  Every continuous dcpo is a c-space in its Scott
  topology, then we use \cite[Corollary~6.9~(c)]{Keimel:topcones2},
  which says that every semitopological cone with a c-space topology
  is topological.
%
\end{example}

\begin{example}
  \label{exa:L}
  For every bounded sup-semi-lattice $(L, \leq, \top, \vee)$, we can
  define $x+y$ as $x \vee y$, $r \cdot x$ as $x$ if $r > 0$, $\bot$
  otherwise.  This is a cone.  With the Scott topology, it is a
  semitopological cone, and a topological cone if $L$ is continuous
  \cite[Section~6.1]{heckmann96}.  This illustrates that how far from
  vector spaces cones can be.
\end{example}

\begin{example}\label{dualcones}
  \begin{enumerate}
  \item For any cone~$C$, the set of all linear maps from $C$ to
    $\exr$ is a cone with pointwise addition and scalar
    multiplication.
  \item For any semitopological cone~$C$, the set of all \emph{lower
      semi-continuous} linear maps from $C$ to $\exr$ is a cone with
    pointwise addition and multiplication. We denote it as $C^*$ and
    call it the \emph{dual cone} of~$C$. We endow $C^*$ with the
    \emph{upper weak$^*$ topology}, that is, the coarsest topology
    making the functions
    $$\eta_C(x) = (\phi \mapsto \phi(x)) \colon C^*\to \exr$$
    continuous for all $x\in C$.  The cone $C^*$ with the upper
    weak$^*$ topology is a topological cone, as is every subcone of
    any power ${\exr}^I$ with the subspace topology of the product
    topology, see the discussion after Definition~5.1 in
    \cite{Keimel:topcones2}, or \cite[Section~3]{cohen06} for example.
  \end{enumerate}
\end{example}

\begin{proposition}
  For any topological space $X$, $\mathcal V_wX$ is a $T_0$
  topological cone.
\end{proposition}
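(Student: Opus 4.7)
The statement decomposes into three independent checks: that $\V X$ carries a cone structure, that the weak topology is $T_0$, and that addition and scalar multiplication are jointly continuous.

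The cone structure comes essentially for free. Define $(r\mu+s\nu)(U)\eqdef r\mu(U)+s\nu(U)$ open-by-open, with the zero valuation playing the role of the neutral element. That $r\mu+s\nu$ is again a continuous valuation is recorded in Example~\ref{finitevaluations} (or rather the example immediately following it), and the seven cone axioms reduce, on each open $U$, to the corresponding identities already satisfied by the cone $\creal$.

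The $T_0$ property is witnessed by the subbasic opens themselves. If $\mu\neq\nu$, there is some open $U$ with, say, $\mu(U)<\nu(U)$; picking any real $r$ strictly between these two values places $\nu$ in $[U>r]$ while excluding $\mu$.

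The main content is joint continuity. The key observation is that, by the very definition of the weak topology, for every open $V\subseteq X$ the evaluation map $e_V\colon \V X\to\creal$, $\mu\mapsto\mu(V)$, is continuous, since $e_V^{-1}((r,\infty])=[V>r]$; in fact the weak topology is the initial topology with respect to the family $\{e_V\}_{V\in\Open X}$, so a map into $\V X$ is continuous iff its post-composition with every $e_V$ is continuous. Applied to $+\colon \V X\times\V X\to\V X$, this reduces the problem to checking that for each open $V$ the composite $(\mu,\nu)\mapsto e_V(\mu)+e_V(\nu)$ is continuous into $\creal$, which follows from the joint Scott-continuity of addition on the continuous lattice $\creal$ together with the continuity of the $e_V$. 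Joint continuity of scalar multiplication is entirely analogous, using joint Scott-continuity of $\cdot\colon \Rplus\times\creal\to\creal$; alternatively one may verify separate continuity this way and invoke Remark~\ref{rem:ershov}, since $\Rplus$ is a c-space in its Scott topology. The only mild annoyance is tracking the boundary values $\mu(V)=\infty$ or $r=\infty$, but these are absorbed uniformly by working throughout in $\creal$ rather than in $\Rplus$.
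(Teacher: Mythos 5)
Your proof is correct, but the route you take for the only substantive part --- joint continuity --- differs from the one written in the paper. You observe that the weak topology is the initial topology for the evaluation maps $e_V \colon \mu \mapsto \mu(V)$ into $\creal$, so that continuity of a map into $\V X$ can be tested after post-composition with each $e_V$; since addition and scalar multiplication are computed componentwise, everything then reduces to the fact that $\creal$ is itself a topological cone in its Scott topology. This is in essence the ``subcone of a power $\creal^I$'' argument: the paper explicitly acknowledges this alternative (it is the identification of $\V X$ with the dual cone $(\Lform X)^*$, cf.\ Example~\ref{dualcones} and the remark that every subcone of $\creal^I$ with the product topology is a topological cone, the path taken in \cite{cohen06}), but deliberately chooses instead to give an explicit $\varepsilon$-style computation: given $\mu(U)+\nu(U) > r$, it splits on whether one of the values is infinite and otherwise manufactures thresholds $r_\mu = \max\{\mu(U)-\varepsilon,0\}$, $r_\nu = \max\{\nu(U)-\varepsilon,0\}$ so that $[U>r_\mu] \times [U>r_\nu]$ is carried into $[U>r]$. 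Your approach is cleaner and generalises immediately to arbitrary dual cones; the paper's approach is self-contained and avoids having to justify that joint Scott-continuity of $+$ on $\creal \times \creal$ gives continuity for the product of the Scott topologies (harmless here since $\creal$ is a continuous lattice, and the paper's own example of $\creal$ as a topological cone covers it, but it is a point worth being aware of). Your $T_0$ argument and the treatment of the cone axioms coincide with the paper's, and your fallback to Remark~\ref{rem:ershov} for scalar multiplication is legitimate since $\Rplus$ carries a c-space topology.
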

$\mathcal V_wX$ can be identified with the dual cone
${(\mathcal LX)}^*$, by a form of the Riesz representation theorem
~\cite{kirch93}; see also Section~\ref{sec:riesz}.  This is the path
taken in \cite{cohen06}.  We give an explicit proof.  Showing that
$C^*$, for a general semitopological cone $C$, is a $T_0$ topological
cone is done similarly. 

\begin{proof}
  For all continuous valuations $\mu, \nu \in \mathcal V_wX$ and
  $r\in \Rplus$, we define $(r\cdot\mu) (U) = r(\mu (U))$ and
  $(\mu+\nu)(U)= \mu(U)+\nu(U)$. It is easy to see that
  $\mathcal V_wX$ with $+$ and $\cdot$ form a cone structure. We
  proceed to check that $+$ and~$\cdot$ are jointly continuous. To
  this end, we assume that $\mu + \nu\in [U>r]$ for some $U$ open
  in~$X$ and $r\in \mathbb R_+\setminus\{0\}$. By definition, that
  means that $\mu (U)+\nu(U)>r$. If either $\mu(U)$ or $\nu (U)$ is
  equal to $\infty$, say $\mu(U)= \infty$, then we know that
  $\mu\in [U>r]$, and we pick the whole $\mathcal V_wX$ as an open
  neighbourhood of $\nu$. Obviously, for any $\mu'\in [U>r]$ and
  $\nu'\in \mathcal V_wX$, $\mu'+\nu'\in [U>r]$. If $\mu (U)+\nu(U)$
  is finite, we choose some $s\in \mathbb R_+$ such that
  $\mu (U)+\nu(U)>s>r$, we let $ \varepsilon = \frac{s-r}{2}$,
  $r_\mu = \max\{\mu(U)-\varepsilon, 0\}$ and
  $r_\nu = \max\{\nu(U)-\varepsilon, 0\}$. Then $\mu \in [U>r_\mu]$
  and $\nu\in [U>r_\nu]$, and for all $\mu'\in [U>r_\mu]$ and
  $\nu'\in [U>r_\nu]$,
  $(\mu'+\nu')(U)= \mu'(U) + \nu'(U)>r_\mu+r_\nu>r$. So we have proved
  that $+$ is jointly continuous.  The joint continuity of scalar
  multiplication can be verified similarly.

  For $T_0$-ness, let $\mu_1$ and $\mu_2$ be two different continuous
  valuations. Then there exists an open set $U$ such that
  $\mu_1(U)\not=\mu_2(U)$. Without loss of generality we assume that
  $\mu_1(U)<\mu_2(U)$. Choose $s$ such that
  $\mu_1(U)<s<\mu_2(U)$. Then $[U>s]$ is an open subset of
  $\mathcal V_wX$ containing $\mu_2$ but not $\mu_1$.
\end{proof}

The cone structure on~$\mathcal V_wX$ also has additional properties.
\begin{definition}
  \begin{itemize}
  \item A subset $A$ of a cone $C$ is called \emph{convex} if and only
    if, for any two points $a, b\in A$, the linear combination
    $ra+(1-r)b$ is in~$A$ for any $r\in [0,1 ]$.
  \item A subset $A$ of a cone C is called a \emph{half-space} if and
    only if both $A$ and its complement are convex.
  \item A cone $C$ with a $T_0$ topology is called \emph{weakly
      locally convex} if and only if for every point $x\in C$, every
    open neighbourhood $U$ of $x$ contains a convex (not necessarily
    open) neighbourhood of $x$.
  \item A cone $C$ with a $T_0$ topology is called \emph{locally
      convex} if and only if each point has a neighbourhood basis of
    open convex neighbourhoods.
  \item A cone $C$ with a $T_0$ topology is called \emph{locally
      linear} if and only if $C$ has a subbase of open half-spaces.
  \end{itemize}
\end{definition}
Weak local convexity was introduced in \cite{heckmann96}, where it is
simply called local convexity.  Our notion of local convexity is that
of \cite{Keimel:topcones2,cohen06}.  The notion of local linearity is
new.  Note that all those notions would be equivalent in the context
of topological vector spaces.

\begin{proposition}
  Every locally linear topological cone is locally convex, and every
  locally convex topological cone is weakly locally convex. \qed
\end{proposition}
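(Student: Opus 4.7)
The plan is to prove both implications directly from the definitions; both are short and the only substantive fact used is that convexity is preserved under finite intersections.

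For the first implication, suppose $C$ is locally linear. Given a point $x \in C$ and an open neighbourhood $U$ of $x$, I would use the fact that $C$ has a subbase of open half-spaces to obtain finitely many open half-spaces $H_1, \ldots, H_n$ with $x \in H_1 \cap \cdots \cap H_n \subseteq U$. By definition each $H_i$ is convex, and a straightforward check from the definition $ra + (1-r)b \in A$ for $r \in [0,1]$ shows that the intersection of finitely many convex sets is convex. Hence $H_1 \cap \cdots \cap H_n$ is an open convex neighbourhood of $x$ inside $U$, proving that $C$ is locally convex.

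For the second implication, suppose $C$ is locally convex. Given a point $x \in C$ and an open neighbourhood $U$ of $x$, local convexity produces an open convex neighbourhood $V$ of $x$ with $V \subseteq U$. Since $V$ is itself convex and a (not necessarily open) neighbourhood of $x$ contained in $U$, weak local convexity is established immediately.

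There is no real obstacle here: the argument is an unpacking of definitions, with the single auxiliary observation being the closure of convexity under finite intersection, which follows at once from the pointwise convex-combination condition.
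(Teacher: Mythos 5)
Your argument is correct: the paper omits the proof entirely (marking the proposition with \qed as immediate), and your unpacking of the definitions---finite intersections of the subbasic open half-spaces form a base of open convex sets, and an open convex neighbourhood is in particular a convex neighbourhood---is exactly the intended routine verification. The one auxiliary fact you invoke, closure of convexity under finite intersection, is checked correctly from the definition.
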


\begin{example}
  The dual cone $C^*$ of any semitopological cone~$C$ (defined in
  Example~\ref{dualcones}) is locally linear. One verifies that the
  sets $(\eta_C(x))^{-1}((r, \infty])$ are half-spaces for all
  $x\in X$ and $r\in \mathbb R_+$, and they form a subbase for the
  upper weak$^*$ topology on~$C^*$.
\end{example}

\begin{example}
  Specializing the previous construction to
  $\mathcal V_wX \cong {(\mathcal LX)}^*$, the subbasic open subsets
  $[U > r]$ of $\mathcal V_wX$ are all half-spaces, so $\mathcal V_wX$
  is locally linear.
\end{example}

The topology on $\mathcal V_wX$ is more than $T_0$, it is actually
sober by~\cite[Proposition 5.1]{heckmann96}.  Hence:
\begin{proposition}
  $\mathcal V_wX$ is a locally linear sober topological cone for any
  space~$X$.  \qed
\end{proposition}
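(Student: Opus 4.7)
The statement is a conjunction of three properties, all of which have essentially been established by this point in the paper, so my plan is simply to assemble them.

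The first property, that $\mathcal V_wX$ is a $T_0$ topological cone, is exactly the content of the proposition proved just above (joint continuity of $+$ and $\cdot$, together with the $T_0$ separation argument using a subbasic $[U > s]$). Nothing more needs to be said here; I would just cite that proposition.

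For local linearity, I would refer to the example immediately preceding, which observes that the subbasic open sets $[U > r]$, with $U \in \Open X$ and $r \in \Rplus$, form a subbase of the weak topology and are half-spaces. If I wanted to expand the half-space claim, I would note that $[U > r] = \{\mu : \mu(U) > r\}$ and its complement $\{\mu : \mu(U) \leq r\}$ are both convex, since $\mu \mapsto \mu(U)$ is linear and the sets $(r,\infty]$ and $[0,r]$ are convex in $\exr$; given $\mu_1, \mu_2$ with $\mu_i(U) > r$ and $t \in [0,1]$, one has $(t\mu_1 + (1-t)\mu_2)(U) = t\mu_1(U) + (1-t)\mu_2(U) > r$, and analogously for the complement.

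Finally, sobriety is exactly Heckmann's \cite[Proposition~5.1]{heckmann96}, as recalled in the sentence directly above the statement. Combining the three bullets, the conclusion follows without further work. There is no genuine obstacle: the point of this proposition is to record the cumulative structural description of $\mathcal V_wX$ that will be used in the algebra characterisation of Section~\ref{algebras}, rather than to prove anything new.
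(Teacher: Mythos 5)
Your proposal is correct and follows exactly the route the paper intends: the result is stated with an immediate \qed because it is the conjunction of the preceding proposition ($T_0$ topological cone), the example noting that the subbasic opens $[U>r]$ are half-spaces (local linearity), and Heckmann's sobriety result. Your explicit verification that $[U>r]$ and its complement are convex, via linearity of $\mu \mapsto \mu(U)$, is a harmless unpacking of what the paper delegates to the dual-cone example.
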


\begin{example}
  \label{exa:LX:locconv}
  For every core-compact space $X$, $\Lform X$ is a continuous
  lattice.  It follows that $\Lform X$ (with its Scott topology) is a
  locally convex topological cone, using an argument that Keimel
  \cite[Lemma~6.12]{Keimel:topcones2} attributes to Lawson.
  We argue that $\Lform X$ is in fact locally linear.  More generally,
  $\Lform X$ is a locally linear semitopological cone for every space
  $X$ whose sobrification $X^s$ is $\odot$-consonant
  \cite[Definition~13.1]{dBGLJL:LCS}.  (If $X$ is core-compact, then
  $X^s$ is locally compact sober \cite[Theorem~8.3.10]{goubault13a},
  every locally compact sober space is LCS-complete, and every
  LCS-complete space is $\odot$-consonant
  \cite[Lemma~13.2]{dBGLJL:LCS}.)  First, $\Lform X$ is homeomorphic
  to $\Lform {X^s}$, where $X^s$ is the sobrification of $X$
  \cite[Lemma~2.1]{JGL-mscs16}.  This is because $\creal$ is sober,
  and therefore every continuous map from $X$ to $\creal$ has a unique
  continuous extension to $X^s$.  This homeomorphism is also an
  isomorphism of cones.  If $X^s$ is $\odot$-consonant, then the Scott
  topology on $\Lform {X^s}$ coincides with the compact-open topology
  \cite[Corollary~13.5]{dBGLJL:LCS}.  The subbasic open subsets
  $\{f \in \Lform {X^s} \mid f (Q) \subseteq (r, \infty]\}$ ($Q$
  compact saturated in $X^s$, $r \in \Rplus$) are easily seen to be
  open half-spaces.
\end{example}

\begin{example}
  \label{exa:L:locconv}
  Here is an example of a locally convex, non-locally linear 
  topological cone.  Consider any complete lattice $L$, and equip it
  with the Scott topology and with the cone structure of
  Example~\ref{exa:L}.  Its non-empty convex subsets are its directed
  subsets.  In particular, every open subset is convex, which implies
  that $L$ is trivially locally convex.  For every non-empty convex
  closed subset $C$, $C$ is directed and closed, so $x=\sup C$ is in
  $C$, and therefore $C$ is the downward closure $\dc x$ of $x$.
  Hence the proper open half-spaces are exactly the complements of
  downward closures of points.  It follows that the topology generated
  by the open half-spaces is the \emph{upper topology}.  In
  particular, $L$ is locally linear if and only if the upper and Scott
  topologies coincide.  In particular, for a continuous (complete)
  lattice $L$, $L$ is locally linear if and only if $L$ is
  hypercontinuous \cite[Proposition~VII-3.4]{gierz03}.  The
  distributive hypercontinuous lattices are the Stone duals of
  quasi-continuous dcpos \cite[Propositions~VII-3.7,
  VII-3.8]{gierz03}.  Hence any lattice of the form $\Open X$, where
  $X$ is core-compact but not a quasi-continuous dcpo (or does not
  have the Scott topology), is a locally convex topological cone that
  is not locally linear.  For example $\Open\real$, where $\real$ 
  comes with its metric topology, fits.
\end{example}

\begin{remark}
  We have already mentioned that local linearity was not used in
  \cite{cohen06}, and one may think that this is due to the author's
  reliance on stable compactness.  However, there are stably compact,
  locally convex but non-locally linear topological cones: any
  continuous, non-hypercontinuous 
  lattice $L$ will serve as an example (Example~\ref{exa:L:locconv}),
  since every continuous lattice is stably compact in its Scott
  topology \cite[Fact~9.1.6]{goubault13a}.
\end{remark}

Recall that a \emph{retraction} $r\colon X\to Y$ is a continuous map between
topological spaces such that there is a continuous map $s\colon  Y\to X$
with $ r\circ s = \id_Y$. $Y$ is the retract of $X$. A \emph{linear
  retraction} is any retraction $r\colon C\to D$ between semitopological
cones that is also linear. Then $D$ is a \emph{linear retract} of
$C$. Beware that we do \emph{not} require the associated
\emph{section}~$s$ to be linear in any way.

Heckmann showed that every linear retract of a weakly locally convex
cone is weakly locally convex \cite[Proposition 6.6]{heckmann96}.  It
follows:
\begin{proposition}\label{suls}
  Let $C$ be a locally linear topological cone, $D$ be a topological
  cone, and $r \colon C\to D$ be a linear retraction. Then $D$ is a
  weakly locally convex cone.  \qed
\end{proposition}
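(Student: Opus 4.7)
The plan is to observe that this proposition is an immediate consequence of Heckmann's Proposition 6.6, cited just above the statement, combined with the earlier proposition which asserts that every locally linear topological cone is in particular weakly locally convex. Concretely, since $C$ is locally linear, it is locally convex and hence weakly locally convex; Heckmann's result, applied to the linear retraction $r \colon C\to D$, then directly yields that $D$ is weakly locally convex.

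For the reader who wants a self-contained sketch of why Heckmann's argument works, I would proceed as follows. Let $s \colon D \to C$ be a continuous section for $r$, so that $r \circ s = \id_D$. Fix $y \in D$ and let $V$ be any open neighborhood of $y$ in $D$. Then $r^{-1}(V)$ is an open neighborhood of $s(y)$ in $C$, so by weak local convexity of $C$ there is a convex (not necessarily open) neighborhood $A$ of $s(y)$ contained in $r^{-1}(V)$. The image $r(A)$ is then a convex subset of $V$: for $a,b \in A$ and $t \in [0,1]$, linearity of $r$ gives $t\, r(a) + (1-t)\, r(b) = r(t\, a + (1-t)\, b) \in r(A)$.

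To see that $r(A)$ is actually a \emph{neighborhood} of $y$, choose an open $U \subseteq A$ with $s(y) \in U$. Then $s^{-1}(U)$ is an open neighborhood of $y$ in $D$, and for every $y' \in s^{-1}(U)$ we have $y' = r(s(y')) \in r(A)$, so $s^{-1}(U) \subseteq r(A) \subseteq V$. Hence $r(A)$ is a convex neighborhood of $y$ contained in $V$, which establishes weak local convexity of $D$ at $y$.

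There is no genuine obstacle in this proof; the only subtlety worth underlining is that the section $s$ need not be linear. Its only role is to provide, via $s^{-1}(U)$, an open neighborhood of $y$ sitting inside $r(A)$, while the convexity of $r(A)$ is secured entirely on the $C$ side by linearity of $r$ and convexity of $A$. This is why one can afford a non-linear section, and also why the proposition is really a mild repackaging of Heckmann's result specialized to the locally linear case, which is the one relevant to the applications to $\V X$ developed later.
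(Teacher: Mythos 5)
Your proof is correct and follows exactly the paper's route: the proposition is obtained by combining the implication ``locally linear $\Rightarrow$ locally convex $\Rightarrow$ weakly locally convex'' with Heckmann's result that a linear retract of a weakly locally convex cone is weakly locally convex. Your additional self-contained sketch of Heckmann's argument (convexity of $r(A)$ via linearity of $r$, and $s^{-1}(U)\subseteq r(A)$ via the non-linear section) is accurate and correctly identifies the one subtlety, namely that the section need not be linear.
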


We will see in Section~\ref{fandp} that, conversely, every weakly
locally convex cone is a linear retract of some locally linear
topological cone.

\emph{Keimel's Separation Theorem}, which we reproduce below, is an
analogue of the Hahn-Banach separation theorem on 
semitopological cones, and provides us with a rich collection of lower
semi-continuous linear maps.
\begin{theorem}{\rm \cite[Theorem
    9.1]{Keimel:topcones2}} \label{keimelsep} In a semitopological
  cone $C$ consider a nonempty convex subset~$A$ and an open convex
  subset $U$. If $A$ and $U$ are disjoint, then there exists a lower
  semi-continuous linear functional $\Lambda \colon C\to \exr$ such
  that $\Lambda(x) \leq 1<\Lambda(y)$ for all $x\in A$ and $y\in
  U$. \qed
\end{theorem}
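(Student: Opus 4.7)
The plan is to adapt the classical Hahn-Banach geometric separation theorem to semitopological cones, where the absence of subtraction in the codomain $C$ makes the standard arguments considerably more delicate. The argument has three layers: construct a sublinear (positively homogeneous and subadditive) gauge on $C$ tailored to the pair $(A, U)$, apply a Hahn-Banach type extension lemma to obtain a linear functional dominated by this gauge, and derive lower semi-continuity from the openness of $U$.

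For the first step I would build an auxiliary sublinear functional $p\colon C \to \creal$ whose unit sublevel set contains $A$ and whose unit strict superlevel set contains $U$. A Minkowski-style gauge associated with the open convex set $U$ is the natural starting point, modified to take into account the convex set $A$; one workable variant is
\[
p(z) = \inf\{t > 0 : \exists a \in A,\ z + t a \in t U\},
\]
with the convention $\inf\emptyset = \infty$. Convexity of both $A$ and $U$ should yield subadditivity of $p$, the cone axioms give positive homogeneity, and the disjointness $A \cap U = \emptyset$ together with the openness of $U$ should yield $p(x) \leq 1$ for $x \in A$ and $p(y) > 1$ for $y \in U$.

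For the second step I would apply a Hahn-Banach type extension theorem for cones, in the spirit of Roth's sandwich theorem, to obtain a linear functional $\Lambda\colon C \to \creal$ with $\Lambda \leq p$ pointwise. Starting from the zero functional on the trivial subcone $\{0\}$, Zorn's lemma produces a maximal dominated linear extension; a one-step extension lemma then forces the domain to be all of $C$. Once $\Lambda$ is in hand, the chain $\Lambda(x) \leq p(x) \leq 1 < p(y)$ yields $\Lambda(x) \leq 1$ on $A$, and a small additional argument using strictness of $p > 1$ on the open set $U$ together with positive homogeneity gives $1 < \Lambda(y)$ on $U$. Lower semi-continuity of $\Lambda$ then follows from positive homogeneity and the separate continuity of addition and scalar multiplication in a semitopological cone: each strict superlevel set $\{z : \Lambda(z) > r\}$ can be shown to contain, around each of its points, a suitably rescaled open neighbourhood inherited from $U$.

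The main obstacle is the one-step extension in the cone Hahn-Banach lemma. In a vector space one extends $\Lambda$ to one more dimension by choosing a value at the new element $c$ in an interval bounded below by $\sup_{d'} (\Lambda(d') - p(d' - c))$ and above by $\inf_d (p(d + c) - \Lambda(d))$, and subadditivity of $p$ guarantees the interval is nonempty. In a cone the expression $p(d' - c)$ is meaningless, and one must instead set up constraints of the form $\Lambda(d_1) + t r \leq p(d_1 + t c)$ for all $d_1 \in D$ and $t > 0$ (together with symmetric constraints coming from elements where $c$ appears on the other side), showing nonemptiness of the admissible interval for $r = \Lambda(c)$ via a careful double application of subadditivity of $p$ to pairs $d_1 + t_1 c$ and $d_2 + t_2 c$. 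This is where most of the technical work is hidden and is the heart of Keimel's original proof.
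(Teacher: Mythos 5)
The paper does not actually prove this statement: it is imported verbatim from Keimel \cite[Theorem~9.1]{Keimel:topcones2} and stated with an empty proof, so your sketch has to be measured against Keimel's original argument. Your overall architecture (a gauge adapted to $(A,U)$, a Hahn--Banach-type theorem for cones via Zorn's lemma, lower semi-continuity extracted from the openness of $U$) is indeed the right shape, but two of its load-bearing steps are broken. First, the proposed gauge $p(z)=\inf\{t>0 \,:\, \exists a\in A,\ z+ta\in tU\}$ does not have the properties you ascribe to it. The condition $z+ta\in tU$ rewrites as $\frac{1}{t}z+a\in U$, and since open sets in a semitopological cone are upward closed under specialization, this gets \emph{easier} as $t\to 0^+$. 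Concretely, in $C=\creal$ with $A=\{2\}$ and $U=(3,\infty]$, one has $2+2t\in t\,(3,\infty]$ iff $t<2$, so $p(2)=0$; likewise $p(z)=0$ for every $z>0$. So $p$ is neither $\le 1$ on $A$ nor $>1$ on $U$, and any linear $\Lambda\le p$ must vanish on all of $(0,\infty]$, which makes separation impossible even though a separating functional ($z\mapsto z/3$) exists.

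Second, and more structurally: in a cone a one-sided domination $\Lambda\le p$ can \emph{never} yield the lower bound $1<\Lambda$ on $U$, because there are no negatives to convert upper bounds into lower bounds --- the zero functional satisfies $\Lambda\le p$ whenever $p\ge 0$. The ``small additional argument'' you defer to at that point cannot exist. What is actually needed, and what Keimel does, is a genuine sandwich: introduce the superadditive, positively homogeneous minorant $q(z)=\sup\{t>0 \,:\, z\in tU\}$, which is lower semi-continuous because each $tU$ is open, satisfies $q>1$ on $U$ by openness of $U$ and continuity of $s\mapsto sy$, is $\le 1$ on $A$ by disjointness together with upward closure of $U$, and is superadditive because $t_1U+t_2U=(t_1+t_2)U$ for convex $U$; then pair it with a sublinear majorant enforcing $\Lambda\le 1$ on $A$ and apply the Sandwich Theorem \cite[Theorem~8.2]{Keimel:topcones2} to get $q\le\Lambda\le p$. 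The lower semi-continuity of $\Lambda$ is also tied to $q$, not merely to homogeneity. Finally, that sandwich theorem is proved in Roth's style --- take a \emph{maximal} superlinear functional between $q$ and $p$ and show maximality forces additivity --- rather than by one-step extension from a subcone; the interval-nonemptiness computation you sketch is vector-space intuition that does not transfer, precisely because expressions like $p(d'-c)$ are unavailable.
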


Following Keimel, we call a semitopological cone $C$
\emph{convex-$T_0$} if and only if for every pair of distinct points
$a$, $b$ of $C$, there is a lower semi-continuous linear function
$\Lambda \colon C\to \exr$ such that $\Lambda(a)\not= \Lambda(b)$
\cite[Definition~4.7]{Keimel:topcones2}.  The following is an
immediate consequence of \cite[Corollary~9.3]{Keimel:topcones2}.  We
give the explicit, short proof.
\begin{corollary}\label{coroab}
  Every locally convex semitopological cone is convex-$T_0$.
\end{corollary}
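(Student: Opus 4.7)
The plan is to reduce the statement to a direct application of Keimel's Separation Theorem (Theorem~\ref{keimelsep}), using local convexity to produce a separating open convex set and using $T_0$-ness to choose which point to separate.

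First, I would take two distinct points $a, b \in C$. Since $C$ carries a $T_0$ topology (this is built into the definition of a semitopological cone), there exists an open set $U$ containing one of the points but not the other; without loss of generality, $a \in U$ and $b \notin U$ (the other case is symmetric). By local convexity, there is an open convex neighbourhood $V$ of $a$ contained in $U$, so that $a \in V$ and $b \notin V$.

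Next, I would apply Keimel's Separation Theorem to the singleton $A = \{b\}$ (which is trivially convex and nonempty) and the open convex set $V$. Since $b \notin V$, the sets $A$ and $V$ are disjoint, and the theorem produces a lower semi-continuous linear functional $\Lambda \colon C \to \exr$ such that $\Lambda(b) \leq 1 < \Lambda(a)$. In particular $\Lambda(a) \neq \Lambda(b)$, which is exactly the convex-$T_0$ condition.

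There is essentially no obstacle here: the main content has already been absorbed into Theorem~\ref{keimelsep}, so the proof is just a matter of correctly setting up the two disjoint convex sets to feed into it, which local convexity makes immediate. The only mild subtlety is being careful that, depending on which of $a,b$ has an open neighbourhood missing the other, we separate the right singleton from the right open convex set; but both cases are handled identically by swapping the roles of $a$ and $b$.
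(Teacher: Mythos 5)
Your proof is correct and follows essentially the same route as the paper's: use $T_0$-ness to find an open set separating $a$ from $b$, shrink it to an open convex neighbourhood via local convexity, and apply Keimel's Separation Theorem to that neighbourhood and the singleton of the excluded point. No gaps.
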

\begin{proof}
  Since $C$ is $T_0$, we may assume that there exists an open open $U$
  containing $a$ but not $b$. Since $C$ is locally convex, we can find
  an open convex subset $V$ such that $a\in V\subseteq U$.  Realising
  that the singleton set $\{b\}$ is a convex set and $b\notin V$, we
  apply Theorem~\ref{keimelsep} and we find a lower semi-continuous
  linear functional $\Lambda$ such that $\Lambda (b)\leq 1<\Lambda(y)$
  for all $y\in V$. Hence $\Lambda(b)<\Lambda(a)$, since $a\in V$.
\end{proof}

Linear maps on cones such as $\creal$, $\Lform X$, $\mathcal V_wX$
follow our intuition.  Let us explore the stranger cones from
Example~\ref{exa:L}.
\begin{example}
  \label{exa:L:lin}
  Consider any complete lattice $L$ with its Scott topology and the
  cone structure of Example~\ref{exa:L}.  For every lower
  semi-continuous linear map $\Lambda \colon L \to \creal$,
  $\Lambda^{-1} ((1, \infty])$ is a proper open half-space, hence of
  the form $L \setminus \dc x_0$ for some point $x_0 \in L$ (see
  Example~\ref{exa:L:locconv}).  Then $x \leq x_0$ if and only if
  $\Lambda (x) < 1$ for every $x \in L$, and the equality
  $\Lambda (rx) = r \Lambda (x)$ implies that $\Lambda (x)$ can only
  be equal to $0$ or to $\infty$.  It follows that the semi-continuous
  linear maps on $L$ are exactly the maps
  $\infty \cdot \chi_{L \setminus \dc x_0}$, where $x_0 \in X$.
  
  As a consequence, the dual cone $L^*$ can be equated with the
  opposite lattice $L^{op}$ with the upper topology.  The cone
  structure is that of $L^{op}$: addition is infimum in $L$, $r \cdot
  x$ is equal to $x$ if $r \neq 0$, to the top element of $L$ otherwise.
\end{example}

\subsection{A Riesz-type representation and the Schr\"oder-Simpson
  Theorem}
\label{sec:riesz}

We have already mentioned a Riesz-type representation theorem for
continuous valuations~\cite{kirch93}.  That states that
$\nu \mapsto (f \mapsto \int f~d\nu)$ and $\phi \mapsto (U \mapsto \phi (\chi_U))$ define mutually inverse maps between continuous valuations on $X$
and lower semi-continuous linear functions on $\mathcal LX$.  Additionally,
those define a homeomorphism between $\mathcal V_w X$ and the dual cone
$(\mathcal LX)^*$, namely, the weak topology on the former is in one-to-one
correspondence with the upper weak$^*$ topology on the latter under this bijection.

There is yet another representation theorem, the so-called
Schr\"oder-Simpson Theorem, stating that any linear lower
semi-continuous functional $\phi$ from~$\mathcal V_wX$ to~$\exr$ is
uniquely determined by a semi-continuous function $h\in \mathcal LX$
in the sense that $\phi(\nu) = \int h~d\nu$ for all
$\nu\in \mathcal V_wX$. The theorem was originally proved by
Schr\"oder and Simpson~\cite{schroder05}, Keimel gave a conceptual
proof of it in~\cite{keimel12}, and the first author gave an
elementary proof in~\cite{goubault15}.

\begin{theorem}{\rm (The Schr\"oder-Simpson Theorem)}
  Let X be a topological space, and $\Lambda$ be a lower 
  semi-continuous linear map from $\mathcal V_wX$ to $\exr$.  There is
  a unique lower semi-continuous map $h\in \mathcal LX$ such that
  $\Lambda(\nu) = \int h~d\nu$ for every $\nu\in \mathcal V_wX$, and
  $h(x) = \Lambda (\delta_x)$.
\end{theorem}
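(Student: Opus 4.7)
The formula $h(x) = \Lambda(\delta_x)$ is forced: evaluating any candidate equality $\Lambda(\nu) = \int h~d\nu$ at $\nu = \delta_x$ and using Lemma~\ref{propertyofint}(1) gives $h(x) = \Lambda(\delta_x)$, which already yields uniqueness. To see that $h$ so defined is lower semi-continuous, observe that $h = \Lambda \circ \eta_X$, where $\eta_X \colon x \mapsto \delta_x$ is continuous from $X$ to $\mathcal V_wX$ (Proposition~\ref{folklore}) and $\Lambda$ is continuous from $\mathcal V_wX$ to $\exr$ by assumption, so $h \in \mathcal LX$.

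Next I would introduce the candidate $\Lambda'(\nu) \eqdef \int h~d\nu$ and show $\Lambda = \Lambda'$. The Riesz-type representation (from the discussion after Lemma~\ref{propertyofint}) asserts that $\Lambda'$ is lower semi-continuous and linear on $\mathcal V_wX$, while Lemma~\ref{propertyofint}(1) gives $\Lambda'(\delta_x) = h(x) = \Lambda(\delta_x)$. Linearity of both $\Lambda$ and $\Lambda'$ then extends the equality to every simple valuation:
\[
  \Lambda\Bigl(\sum_{i=1}^n r_i\delta_{x_i}\Bigr) = \sum_{i=1}^n r_i\, h(x_i) = \Lambda'\Bigl(\sum_{i=1}^n r_i\delta_{x_i}\Bigr).
\]

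The hard part, which is the real content of the Schr\"oder-Simpson theorem, is to push this equality from the simple valuations to all of $\mathcal V_wX$. Unlike in the dcpo setting, simple valuations are not generally weakly dense in $\mathcal V_wX$, so one cannot pass to directed suprema directly. The plan is to approximate $h$ from below by step functions $h_N = \frac{1}{2^N}\sum_{k=1}^{N2^N} \chi_{h^{-1}(k/2^N,\infty]}$, as in the proof of Proposition~\ref{folklore}, and to use the identity $\int h_N~d\nu = \frac{1}{2^N}\sum_k \nu(U_k)$ together with lower semi-continuity of $\Lambda$ to obtain $\int h~d\nu \leq \Lambda(\nu)$ by comparing $\Lambda$ against auxiliary linear combinations of Dirac masses supported inside each $U_k$. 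The reverse inequality $\Lambda(\nu) \leq \int h~d\nu$ is the genuinely subtle one; here I would follow the elementary argument of \cite{goubault15}, which constructs, for each $r < \Lambda(\nu)$, a step function $h_N$ below $h$ and a finite ``covering'' of $\nu$ by scaled Dirac masses whose $\Lambda$-value majorises $r$.

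The principal obstacle, then, is precisely this reverse inequality: the elementary combinatorics needed to transfer lower-semi-continuous information about $\Lambda$ at Dirac masses into a uniform bound against the whole valuation $\nu$. Once both inequalities are in hand, $\Lambda(\nu) = \int h~d\nu$ holds on all of $\mathcal V_wX$, completing the proof.
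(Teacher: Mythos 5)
The paper does not actually prove this theorem: it is quoted from the literature, with proofs attributed to Schr\"oder and Simpson, to Keimel, and to \cite{goubault15}. So there is no in-paper argument to compare yours against. Your preliminary reductions are correct and are exactly the ones anybody would make: $h$ is forced to be $\Lambda\circ\eta_X$, hence unique and lower semi-continuous; $\nu\mapsto\int h~d\nu$ is linear and lower semi-continuous (linearity is Lemma~\ref{propertyofint}(v), lower semi-continuity is the openness of $[h>r]$ noted after that lemma, not really the Riesz representation); and the two functionals agree on simple valuations. Since you then defer the remaining content to \cite{goubault15}, your proposal is, like the paper, ultimately a citation of the Schr\"oder--Simpson theorem rather than a proof of it, which is acceptable here.

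Two inaccuracies in your surrounding discussion are worth correcting. First, simple valuations \emph{are} weakly dense in $\V X$: every nonempty basic open $\bigcap_{i=1}^n[U_i>r_i]$ contains a simple valuation (place point masses in the cells of the Boolean algebra generated by the $U_i$). The genuine obstruction to a limiting argument is rather that $\exr$ carries the Scott topology, which is not Hausdorff, so two lower semi-continuous maps agreeing on a dense subset need not agree everywhere. Second, the inequality $\int h~d\nu\leq\Lambda(\nu)$ that you present as the easy half is not easy: the simple valuations built from the cells of $h^{-1}(k/2^N,\infty]$ approximate $\nu$ on those finitely many opens but need not lie below $\nu$ in the stochastic order, so monotonicity of $\Lambda$ cannot be invoked, and in fact both inequalities require the machinery of \cite{goubault15}. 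Neither point invalidates your outline, since the hard core is delegated to the reference in any case.
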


\section{The algebras of the extended powerdomain monad}\label{algebras}

\subsection{The algebras of $\V$}

In order to describe the structure maps of the $\V$-algebras, let us
first define barycentres of continuous valuations by imitating a
definition due to \cite[Chapter~6, 26.2]{Choquet69}, and following
\cite{cohen06}.

\begin{definition}
  \label{defn:bary:choquet}
  Let $C$ be a semitopological cone, and $\nu$ be a continuous
  valuation on $C$.  A \emph{barycentre} of $\nu$ is any point
  $b_\nu \in C$ such that, for every linear lower semi-continuous map
  $\Lambda \colon C \to \exr$,
  $\Lambda (b_\nu) = \int \Lambda ~d \nu$.
\end{definition}

\begin{remark}
  \label{rem:barycenter}
  Given a probability measure $\nu$, Choquet called its barycenters
  its \emph{resultants}.  One can also encounter the name \emph{centre
    of gravity}, or \emph{centre of mass}, of $\nu$.  Choquet's
  definition applies to the case where $C$ is a Hausdorff locally
  convex vector space, not a semitopological cone, and uses continuous
  maps $\Lambda$ from $C$ to~$\mathbb R$ with its standard topology, not
  its Scott topology.
\end{remark}

\begin{example}\label{examplebary}
  Let $C$ be a semitopological cone, and
  $\sum_{i=1}^n r_i\delta_{x_i}$ be a simple valuation on~$C$. Then
  $\sum_{i=1}^nr_ix_i$ is a barycentre of
  $\sum_{i=1}^n r_i\delta_{x_i}$. In particular, for any $x\in C$, $x$
  is a barycentre of the Dirac mass $\delta_x$. Indeed, for every lower
  semi-continuous linear function $f \colon C\to \exr$, we have that
  $f(\sum_{i=1}^nr_ix_i)=\sum_{i=1}^nr_if(x_i) = \int
  f~d(\sum_{i=1}^nr_i\delta_{x_i})$.
\end{example}

\begin{example}
  \label{exa:L:barycentre}
  Let $L$ be a complete lattice with its Scott topology and the cone
  structure of Example~\ref{exa:L}.  For every $\nu \in \V L$, the
  \emph{support} $\supp \nu$ is the complement of the largest open set
  $U$ such that $\nu (U)=0$.  (The family of those open sets is
  directed, by the modularity law, and its supremum must be in it, by
  Scott-continuity.)  We claim that the barycentre of $\nu$ is
  $\bigvee \supp L$.  Indeed, using the definition of barycentres and
  the fact that the lower semi-continuous linear maps $\Lambda$ are
  the maps of the form $\infty \cdot \chi_{L \setminus \dc x_0}$,
  $x_0 \in X$, we obtain that $x$ is a barycentre of $\nu$ if and only
  if the following holds: $(*)$ for every $x_0 \in X$, $x \leq x_0$ if
  and only if $\infty \cdot \nu (L \setminus \dc x_0) = 0$.  Since
  $\infty \cdot \nu (L \setminus \dc x_0) = 0$ is equivalent to
  $\supp \nu \subseteq \dc x_0$, hence to the fact that $x_0$ is an
  upper bound of $\supp \nu$, $(*)$ is equivalent to stating that $x$
  is the least upper bound of $\supp \nu$.
\end{example}

\begin{lemma}
  \label{lemma:bary:unique}
  Barycentres on a convex-$T_0$ semitopological cone $C$ are unique
  when they exist.
\end{lemma}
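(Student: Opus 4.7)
The plan is essentially a one-line argument using the definitions. Suppose $b_1$ and $b_2$ are both barycentres of a continuous valuation $\nu$ on $C$. Then for every lower semi-continuous linear map $\Lambda \colon C \to \exr$, both $\Lambda(b_1)$ and $\Lambda(b_2)$ equal the same integral $\int \Lambda\,d\nu$, hence $\Lambda(b_1) = \Lambda(b_2)$.

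Now I would invoke the convex-$T_0$ hypothesis: if $b_1 \neq b_2$, then by definition there exists a lower semi-continuous linear functional $\Lambda \colon C \to \exr$ separating them, i.e., $\Lambda(b_1) \neq \Lambda(b_2)$. This contradicts the previous paragraph, so we must have $b_1 = b_2$.

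There is no real obstacle here; the lemma is an immediate consequence of the definition of barycentre (quantifying over \emph{all} lower semi-continuous linear functionals) combined with the definition of convex-$T_0$ (which says such functionals are enough to separate points). The only thing to note is that the argument does not use any convexity or topological structure beyond what is packaged into the convex-$T_0$ condition, so the proof is uniform across all the examples of interest (including $\V X$, $\Lform X$, and the locally convex cones to which Corollary~\ref{coroab} applies).
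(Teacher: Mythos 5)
Your proof is correct and is essentially identical to the paper's: both observe that two barycentres of the same valuation agree under every lower semi-continuous linear functional, and then invoke the convex-$T_0$ property to conclude they are equal. Nothing is missing.
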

\begin{proof}
  If $x_0$ and $x_1$ are two barycentres of the same continuous
  valuation $\nu$, then $\Lambda (x_0) = \Lambda (x_1)$ for every
  lower semi-continuous linear map $\Lambda \colon C\to \exr$.  Since
  $C$ is convex-$T_0$, $x_0= x_1$.
\end{proof}
  
We now show that the structure maps of the $\V$-algebras are nothing
but maps that send valuations to their barycentres.
  
\begin{lemma}\label{struisbary}
  Let $(X, \alpha)$ be an algebra of the monad $\mathcal V_w$ on the
  category \TOP.  Then $X$ is a topological cone with $+$ defined by
  $x+y = \alpha(\delta_x+\delta_y)$, and scalar multiplication
  defined by $r\cdot x = \alpha(r\delta_x)$ for $r\in \mathbb R_+$ and
  $x, y\in X$. Moreover, the structure map~$\alpha$ is linear and
  sends each $\mu\in \mathcal V_wX$ to a barycentre of~$\mu$.
\end{lemma}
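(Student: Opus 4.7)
The plan is to transport the cone structure of $\V X$ down to $X$ via $\alpha$, using the algebra laws $\alpha \circ \eta_X = \id_X$ and $\alpha \circ m_X = \alpha \circ \V\alpha$ as the only real tools. The pattern is always the same: for each cone axiom we need to check, we cook up a suitable valuation $\varpi \in \V\V X$, compute $m_X(\varpi)$ and $\V\alpha(\varpi)$, and then equate $\alpha$ on both results. For instance, to prove associativity, take $\varpi = \delta_{\delta_x + \delta_y} + \delta_{\delta_z}$; then $m_X(\varpi) = \delta_x + \delta_y + \delta_z$ while $\V\alpha(\varpi) = \delta_{x+y} + \delta_z$, hence the algebra law yields $(x+y)+z = \alpha(\delta_x+\delta_y+\delta_z)$, and the symmetric computation with $\delta_x$ in the first slot gives the same value for $x+(y+z)$. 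Commutativity is immediate from commutativity of $+$ on $\V X$; the identities $1 \cdot x = x$ and $0 \cdot x = \alpha(0)$ come from the unit law; distributivities $r(x+y) = rx+ry$ and $(r+s)x = rx+sx$ and the scalar associativity $(rs)x = r(sx)$ each go through the same $\varpi$-trick, e.g.\ $\varpi = \delta_{r\delta_x} + \delta_{s\delta_x}$ for $(r+s)x = rx+sx$.

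For the topology, $X$ is already $T_0$ by assumption. Joint continuity of $+$ and $\cdot$ on $X$ is inherited from $\V X$: the unit $\eta_X\colon x\mapsto \delta_x$ is continuous, $+$ and $\cdot$ on $\V X$ are jointly continuous because $\V X$ is a topological cone, and $\alpha$ is continuous, so $(x,y)\mapsto \alpha(\delta_x+\delta_y)$ and $(r,x)\mapsto \alpha(r\delta_x)$ are continuous composites.

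Linearity of $\alpha$ then reduces to two uses of the multiplication law. Apply it to $\varpi = r\delta_\mu + s\delta_\nu$, whose image under $m_X$ is $r\mu+s\nu$ and under $\V\alpha$ is $r\delta_{\alpha(\mu)} + s\delta_{\alpha(\nu)}$, giving $\alpha(r\mu+s\nu) = \alpha(r\delta_{\alpha(\mu)}+s\delta_{\alpha(\nu)})$. Separately, applying the same trick to $\varpi' = \delta_{r\delta_x} + \delta_{s\delta_y}$ shows that $\alpha(r\delta_x+s\delta_y) = \alpha(\delta_{rx}+\delta_{sy}) = rx+sy$. Combining the two equalities with $x = \alpha(\mu)$, $y = \alpha(\nu)$ gives $\alpha(r\mu+s\nu) = r\alpha(\mu)+s\alpha(\nu)$.

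For the barycentre claim, let $\Lambda\colon X \to \creal$ be lower semi-continuous and linear. Consider the map $F\colon \V X \to \creal$ defined by $F(\mu) = \Lambda(\alpha(\mu))$. It is continuous, as a composite of continuous maps, and linear by linearity of both $\alpha$ (just proved) and $\Lambda$; moreover the unit law yields $F(\delta_x) = \Lambda(x)$. By the Schr\"oder--Simpson theorem, any lower semi-continuous linear functional on $\V X$ is uniquely determined by its values on Dirac masses and represented by integration. Since $\mu \mapsto \int \Lambda \, d\mu$ is also lower semi-continuous linear and agrees with $F$ on Dirac masses, uniqueness forces $\Lambda(\alpha(\mu)) = \int \Lambda\,d\mu$ for every $\mu$, which is exactly the barycentre condition of Definition~\ref{defn:bary:choquet}. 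The only mild obstacle is bookkeeping: keeping straight which $\delta$ lives in $\V X$ and which in $\V\V X$ when applying the multiplication axiom; no deeper difficulty is expected.
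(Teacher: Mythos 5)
Your proposal is correct and follows essentially the same route as the paper's proof: transporting the cone axioms via the algebra law $\alpha\circ m_X=\alpha\circ\V\alpha$ together with linearity of $m_X$ and $\V\alpha$ and naturality of the unit, inheriting joint continuity from $\V X$ through $\eta_X$ and $\alpha$, and deducing the barycentre property by applying the Schr\"oder--Simpson theorem to $\Lambda\circ\alpha$. The only cosmetic difference is that you package each verification as a choice of $\varpi\in\V\V X$, whereas the paper unfolds the same equational chains directly.
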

We say that the cone structure obtained this way is \emph{induced} by
the algebra $(X, \alpha)$.  The fact that $\alpha$ is linear and
$\alpha (\mu)$ is a barycentre of $\mu$ has to be understood with
respect to that induced cone structure.  

\begin{proof}
  We first notice that every extension map $f^\dagger$, as given in
  Proposition~\ref{folklore}, is linear, so
  $m_X= \id_{\mathcal V_wX}^\dagger$ and
  $\V \alpha = (\eta_Y\circ \alpha)^\dagger$ are linear.

  Let us show that $X$ with the addition and scalar multiplication
  defined above is a cone.  We only verify the associativity of
  addition and scalar multiplication.  For any $x, y, z\in X$ and
  $r, s\in \mathbb R_+$, we do the following computation:
  \begin{align*}
    (x+y)+z
    &= \alpha (\delta_{\alpha(\delta_x+\delta_y )}+\delta_z)
    & \text{definition of addition on }X
    \\
    &= \alpha (\delta_{\alpha(\delta_x+\delta_y )}+\delta_{\alpha (\delta_z)})
    & \text{definition of structure map} \\
    &= \alpha(\V\alpha (\delta_{\delta_x+\delta_y}) + \V\alpha (\delta_{\delta_z}))
    & \text{naturality of the unit}
    \\
    &= \alpha(\V\alpha (\delta_{\delta_x + \delta_u} + \delta_{\delta_z}))
    & \text{linearity of }\V\alpha
    \\
    &= \alpha m_X(\delta_{(\delta_x+\delta_y)}+ \delta_{\delta_z})
    & \text{definition of structure map}
    \\
    & = \alpha ((\delta_x+\delta_y)+\delta_z)
      & \text{definition of }m_X.
  \end{align*}
  Similarly, $x+(y+z) = \alpha (\delta_x+(\delta_y+\delta_z))$, so
  $(x + y) + z = x + (y + z)$. Moreover,
  \begin{align*}
    r\cdot(s\cdot x)
    &= r\cdot (\alpha(s\delta_x) )
    & \text{definition of scalar multiplication on }X
    \\
    &= \alpha(r\delta_{\alpha(s\delta_x)})
    & \text{definition of scalar multiplication on }X
    \\
    &= \alpha( r\V\alpha (\delta_{s\delta_x}) )
    & \text{naturality of the unit}
    \\
    &= \alpha (\V\alpha (r \delta_{s\delta_x}))
    & \text{linearity of }\V\alpha
    \\
    &= \alpha m_X(r \delta_{s\delta_x})
    & \text{definition of structure map}
    \\
    &= \alpha (rs\delta_x)
    & \text{linearity of }m_X
    \\
    &=(rs)\cdot x
    & \text{definition of scalar multiplication on }X.
  \end{align*}
  To see that $X$ is a topological cone, we assume that $U$ is an open
  set in~$X$ and $x+y\in U$. This means that
  $\alpha(\delta_x+\delta_y)\in U$, hence
  $\delta_x+\delta_y\in \alpha^{-1}(U)$. Since $\mathcal V_w X$ is a
  topological cone and the unit map
  $\eta_X \colon x\mapsto \delta_x \colon X\to \mathcal V_wX$ is
  continuous, we can find open sets $U_x, U_y$ such that
  $x\in U_x, y\in U_y$ and for any $x'\in U_x, y'\in U_y$,
  $\delta_{x'}+\delta_{y'}\in \alpha^{-1}(U)$, which means that
  $x'+y'\in U$ for all $x'\in U_x$ and $y'\in U_y$. This proves that
  $+$ is jointly continuous.  
  The joint continuity of scalar multiplication can be proved
  similarly.
  
  We proceed to prove that $\alpha$ is linear.  Let
  $r\in \mathbb R_+$ and $\mu, \nu\in \V X$.  We have the following:
  \begin{align*}
    \alpha (\mu + \nu)
    & = \alpha (m_X (\delta_\mu) + m_X (\delta_\nu))
    & \text{monad law} \\
    & = \alpha (m_X (\delta_\mu + \delta_\nu))
    & \text{linearity of }m_X \\
    & = \alpha (\V\alpha (\delta_\mu + \delta_\nu))
    & \text{definition of structure map} \\
    & = \alpha (\V\alpha (\delta_\mu) + \V\alpha (\delta_\nu))
    & \text{linearity of }\V\alpha \\
    & = \alpha (\delta_{\alpha (\mu)} + \delta_{\alpha (\nu)})
    & \text{naturality of the unit} \\
    & = \alpha (\mu) + \alpha (\nu)
    & \text{definition of addition on }X.
  \end{align*}
  Similarly, we can prove that $\alpha(r\mu)= r\cdot\alpha(\mu)$.
  
  Finally, we prove that $\alpha(\mu)$ is a barycentre of~$\mu$ for
  all $\mu\in \V X$. Assume that $\Lambda \colon X\to \exr$ is a lower
  semi-continuous linear map. Notice that the composition
  $\Lambda\circ \alpha$ is then a linear map from $\V X$ to
  $\exr$. Hence by the Schr\"oder-Simpson Theorem there exists a
  unique lower semi-continuous map $h \colon X\to \exr$ such that
  $\Lambda\circ \alpha (\nu) = \int h~d\nu$ for all $\nu\in \V X$. In
  particular
  $\Lambda(x)=\Lambda\circ \alpha (\delta_x) = \int h~d\delta_x=h(x)$
  for all $x\in X$. This implies that $h=\Lambda$, and hence
  $\Lambda(\alpha (\mu)) = \int \Lambda ~d\mu$ for all $\mu\in \V
  X$. So $\alpha(\mu)$ is a barycentre of~$\mu$ by definition.
\end{proof}

\begin{corollary}
  Let $X$ be a topological space.  For every
  $\varpi \in \V \V X$, $m_X (\varpi)$ is the barycentre
  $(U\mapsto \int_{\nu \in \V X} \nu(U) d\varpi)$ of $\varpi$ in
  $\V X$.
\end{corollary}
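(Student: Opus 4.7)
The plan is to apply Lemma~\ref{struisbary} to the free algebra $(\V X, m_X)$, which is a $\V$-algebra by the general theory of monads (it is the Eilenberg--Moore algebra obtained from the monad laws $m_X \circ m_{\V X} = m_X \circ \V m_X$ and $m_X \circ \eta_{\V X} = \id_{\V X}$). Lemma~\ref{struisbary} then hands us for free that $m_X (\varpi)$ is a barycentre of $\varpi$, provided we interpret ``barycentre'' with respect to the cone structure that Lemma~\ref{struisbary} induces on $\V X$.

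The one thing that needs verification is therefore that the induced cone structure on $\V X$ coincides with the standard pointwise cone structure already given on $\V X$. By Lemma~\ref{struisbary}, the induced addition is $\mu +_{\mathrm{ind}} \nu = m_X (\delta_\mu + \delta_\nu)$ and the induced scalar multiplication is $r \cdot_{\mathrm{ind}} \mu = m_X (r \delta_\mu)$. Using the explicit description of $m_X$ recalled in the remark after Proposition~\ref{folklore}, namely $m_X (\varpi) (U) = \int_{\nu \in \V X} \nu (U) \, d\varpi$, together with Lemma~\ref{propertyofint}(1) and~(5), a one-line calculation gives $m_X (\delta_\mu + \delta_\nu) (U) = \mu (U) + \nu (U)$ and $m_X (r \delta_\mu) (U) = r \mu (U)$ for every open $U$, so the induced operations agree with the pointwise ones.

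Once this identification is made, Lemma~\ref{struisbary} applied to $(\V X, m_X)$ states exactly that $m_X (\varpi)$ is a barycentre of $\varpi$ in $\V X$ (with respect to its standard cone structure). The explicit formula $U \mapsto \int_{\nu \in \V X} \nu (U) \, d\varpi$ is simply the definition of $m_X = \id_{\V X}^\dagger$ instantiated from Proposition~\ref{folklore}. There is no real obstacle here; the corollary is a direct instantiation of the preceding lemma at the canonical free algebra, and the only genuine content is the matching of cone structures, which is an immediate pointwise check.
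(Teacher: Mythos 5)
Your proof is correct and follows essentially the same route as the paper, whose entire argument is the one-line observation that $(\V X, m_X)$ is a $\V$-algebra by general category theory, so that Lemma~\ref{struisbary} applies. Your additional check that the cone structure induced on $\V X$ by this algebra coincides with the standard pointwise one is a point the paper leaves implicit, and your verification of it via the explicit formula for $m_X$ and Lemma~\ref{propertyofint} is accurate.
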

\begin{proof}
  By general category theory, $(\V X, m_X)$ is an algebra of $\V$.
\end{proof}

\begin{corollary} 
  \label{corl:alpha:unique}
  For every convex-$T_0$ semitopological cone $C$, there is at most
  one map $\alpha \colon \V C \to C$ that makes $(C, \alpha)$ a
  $\V$-algebra and induces the original cone structure on $C$.
\end{corollary}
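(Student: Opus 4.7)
The plan is simply to combine the two preceding lemmas. Suppose $\alpha_1, \alpha_2 \colon \V C \to C$ both make $(C, \alpha_i)$ a $\V$-algebra and both induce the given cone structure on $C$. I want to show $\alpha_1 = \alpha_2$ pointwise on $\V C$.

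First, since both algebras induce the same cone structure on $C$ (by hypothesis, the original one), the class of lower semi-continuous linear maps $\Lambda \colon C \to \exr$ does not depend on which algebra we consider; in particular, the notion of barycentre from Definition~\ref{defn:bary:choquet} is the same for both. By Lemma~\ref{struisbary} applied to $(C,\alpha_1)$ and to $(C,\alpha_2)$, for every $\mu \in \V C$, both $\alpha_1(\mu)$ and $\alpha_2(\mu)$ are barycentres of $\mu$ in the cone $C$.

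Now invoke Lemma~\ref{lemma:bary:unique}: since $C$ is convex-$T_0$, barycentres are unique when they exist. Hence $\alpha_1(\mu) = \alpha_2(\mu)$ for every $\mu \in \V C$, which gives $\alpha_1 = \alpha_2$.

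There is no real obstacle here; the only subtlety to flag is the hypothesis that both algebras \emph{induce the same cone structure}, which is what makes the appeal to convex-$T_0$-ness legitimate (otherwise the two notions of linear functional, and hence of barycentre, could differ).
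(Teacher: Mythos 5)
Your proof is correct and is essentially the paper's own argument: Lemma~\ref{struisbary} forces any such $\alpha$ to send each $\nu$ to a barycentre for the (original) cone structure, and Lemma~\ref{lemma:bary:unique} gives uniqueness of barycentres on a convex-$T_0$ cone. Your explicit remark that the two algebras must induce the \emph{same} cone structure for the barycentre notion to coincide is exactly the point the paper's one-line proof relies on.
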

\begin{proof}
  By Lemma~\ref{struisbary}, and since the induced cone structure is
  the original one, $\alpha$ must map every $\nu$ to one of its
  barycentres, and barycentres are unique by
  Lemma~\ref{lemma:bary:unique}.
\end{proof}

\begin{proposition}\label{prop:V:alg:bary}
  Let $(X, \alpha)$ be an algebra of the monad $\mathcal V_w$ on the
  category \TOP. Then $X$ is a weakly locally convex sober topological
  cone with the induced cone structure.
\end{proposition}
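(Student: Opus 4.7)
The plan is to obtain both properties by exhibiting $X$ as a linear retract of $\V_w X$, and then invoking the results already established in the excerpt.

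First I would observe that since $(X,\alpha)$ is a $\V_w$-algebra, the algebra axiom $\alpha \circ \eta_X = \id_X$ says exactly that $\alpha \colon \V_w X \to X$ is a retraction with section $\eta_X \colon x \mapsto \delta_x$. Both maps are continuous (the unit is a natural transformation into $\V_w$, and $\alpha$ is a morphism in $\ctop$). Moreover, by Lemma~\ref{struisbary}, once $X$ is endowed with the induced cone structure, $\alpha$ is linear. So $\alpha$ is a linear continuous retraction from $\V_w X$ onto $X$, and $X$ is itself a topological cone by the same lemma.

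For weak local convexity, I would simply apply Proposition~\ref{suls} to this setup. The hypotheses are met because $\V_w X$ is locally linear (it is a dual cone, namely $(\Lform X)^*$, as observed after the locally linear definition, and all such dual cones are locally linear), $X$ is a topological cone, and $\alpha$ is a linear retraction. The conclusion is precisely that $X$ is weakly locally convex.

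For sobriety, I would use the fact already cited in the excerpt that $\V_w X$ is sober (\cite[Proposition 5.1]{heckmann96}) together with the standard fact that any retract (in $\mathbf{TOP}$) of a sober space is sober. Here $X$ is a retract of $\V_w X$ via $(\eta_X, \alpha)$, so $X$ is sober.

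There is no real obstacle: the work was already done in Lemma~\ref{struisbary} (the cone structure and linearity of $\alpha$) and in Proposition~\ref{suls} (local linearity of $\V_w X$ descends to weak local convexity of $X$ along a linear retraction). The only care needed is to note explicitly that the retraction used in Proposition~\ref{suls} is linear with respect to the \emph{induced} cone structure on $X$, which is exactly what Lemma~\ref{struisbary} guarantees, and that sobriety passes to continuous retracts in $\ctop$.
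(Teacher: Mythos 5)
Your proof is correct and follows exactly the same route as the paper: use Lemma~\ref{struisbary} to get the induced topological cone structure and linearity of $\alpha$, note that $\alpha\circ\eta_X=\id_X$ makes $\alpha$ a continuous linear retraction of the locally linear sober cone $\V X$, and conclude weak local convexity via Proposition~\ref{suls} and sobriety via preservation under continuous retractions. No gaps.
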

\begin{proof}
  By Lemma~\ref{struisbary}, $X$ is a topological cone, and $\alpha$
  is linear.  It is also a continuous retraction by definition of
  algebras, since $\alpha \circ \eta_X = \id_X$. Hence $X$ is linear
  retract of $\V X$, which is locally linear and sober. Since sobriety
  is preserved by continuous retractions, $X$ is a weakly locally
  convex sober topological cone by Proposition~\ref{suls}.
\end{proof}

We may guess that the $\V$-algebras are the sober, weakly locally
convex, topological cones, or maybe those on which, additionally,
every continuous valuation has a barycentre.  This is not quite
enough.  The function $\alpha$ mapping $\nu$ to its barycentre must be
\emph{continuous} as well, and barycentres should be \emph{unique}. 
The latter happens in all convex-$T_0$ cones, but we do not know
whether the cone structure induced by a $\V$-algebra
(Lemma~\ref{struisbary}) is convex-$T_0$.

\begin{proposition}
  \label{prop:V:bary:alg}
  Let $C$ be a semitopological cone, and $\alpha$ be a continuous map
  from $\mathcal V_w C$ to $C$.  If $\alpha (\nu)$ is the unique
  barycentre of $\nu$ for every $\nu \in \mathcal V_wC$, then
  $(C, \alpha)$ is an algebra of the monad $\mathcal V_w$ on the
  category \TOP.

  In that case, the cone structure on $C$ induced by the algebra
  $(C, \alpha)$ coincides with the original cone structure on $C$.
  $C$ is a sober, weakly locally convex, topological cone.
\end{proposition}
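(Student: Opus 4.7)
My plan is to verify the two Eilenberg--Moore axioms $\alpha\circ\eta_C = \id_C$ and $\alpha\circ m_C = \alpha\circ\V\alpha$ by exploiting the uniqueness of barycentres, to identify the algebra-induced cone structure with the original one using Example~\ref{examplebary}, and then to read off the final topological properties from Proposition~\ref{prop:V:alg:bary}.

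For the unit axiom I would observe that $x$ is a barycentre of $\delta_x$ by Example~\ref{examplebary}, so by the uniqueness assumption $\alpha(\delta_x) = x$. Exactly the same reasoning identifies the algebra-induced cone operations with the original ones: because $x+y$ is a barycentre of $\delta_x+\delta_y$ and $rx$ is a barycentre of $r\delta_x$ (with respect to the original cone structure), uniqueness forces $\alpha(\delta_x+\delta_y)=x+y$ and $\alpha(r\delta_x)=rx$. Once these identifications and the two algebra axioms are in place, Proposition~\ref{prop:V:alg:bary} immediately yields that $C$, with the induced---and hence the original---structure, is a weakly locally convex sober topological cone.

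The hard part will be the multiplication axiom. My strategy is, for fixed $\varpi\in\V\V C$, to prove that $\alpha(\V\alpha(\varpi))$ is itself a barycentre of $m_C(\varpi)$, so that uniqueness forces $\alpha(\V\alpha(\varpi)) = \alpha(m_C(\varpi))$. Given any lower semi-continuous linear map $\Lambda\colon C\to\exr$, two applications of the barycentre identity together with property~(6) of Lemma~\ref{propertyofint} yield
\[
\Lambda\bigl(\alpha(\V\alpha(\varpi))\bigr) \;=\; \int\Lambda\,d\,\V\alpha(\varpi) \;=\; \int (\Lambda\circ\alpha)\,d\varpi.
\]
On the other hand, instantiating the disintegration formula~(\ref{eq:disint}) at $f=\id_{\V C}$ (so that $f^\dagger = m_C$) and $h=\Lambda$, and then using the barycentre identity once more on the inner integral, gives
\[
\int \Lambda\,d\,m_C(\varpi) \;=\; \int_{\nu\in\V C}\Bigl(\int\Lambda\,d\nu\Bigr)\,d\varpi \;=\; \int (\Lambda\circ\alpha)\,d\varpi.
\]
Comparing the two right-hand sides gives $\Lambda(\alpha(\V\alpha(\varpi))) = \int\Lambda\,d\,m_C(\varpi)$, which is exactly the barycentre property for $m_C(\varpi)$.

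The main conceptual obstacle is simply keeping the three levels $\V\V C$, $\V C$ and $C$ straight in the disintegration formula; once one chooses $f=\id_{\V C}$, the argument collapses to the two barycentre identities above, and uniqueness does the rest. Note that $\Lambda\circ\alpha$ is lower semi-continuous on $\V C$ because $\alpha$ is continuous by hypothesis, so the integrals in question are well-defined.
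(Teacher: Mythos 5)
Your proposal is correct and follows essentially the same route as the paper: the unit law and the identification of the induced cone structure via uniqueness of barycentres applied to Example~\ref{examplebary}, and the multiplication law via the chain of equalities combining the barycentre identity, the disintegration formula (\ref{eq:disint}) with $f=\id_{\V C}$, and item~(vi) of Lemma~\ref{propertyofint}. The only cosmetic difference is that you show $\alpha(\V\alpha(\varpi))$ is a barycentre of $m_C(\varpi)$ whereas the paper shows $\alpha(m_C(\varpi))$ is a barycentre of $\V\alpha(\varpi)$; both reduce to the same equality $\int\Lambda\,d\,m_C(\varpi)=\int\Lambda\,d\,\V\alpha(\varpi)$ and conclude by uniqueness.
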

\begin{proof}
  For every $x \in C$, $\alpha (\delta_x)=x$ by uniqueness of
  barycentres, and since $x$ is a barycentre of $\delta_x$
  (Example~\ref{examplebary}).  In order to show that
  $\alpha (\V\alpha (\varpi)) = \alpha (m_C (\varpi))$ for every
  $\varpi \in \V\V C$, we consider any lower semi-continuous linear
  function $\Lambda \colon C \to \creal$, and we observe that:
  \begin{align*}
    \Lambda (\alpha (m_C (\varpi)))
    & = \int_{x \in C} \Lambda(x) dm_C (\varpi)
    & \alpha (m_C (\varpi)) \text{ is a barycentre of }m_C (\varpi)
    \\
    & = \int_{x \in C} \Lambda(x) d
      {(\identity {\V C})}^\dagger (\varpi)
    & m_C = {(\identity {\V C})}^\dagger
    \\
    & = \int_{\nu \in \V C} \left(\int_{x \in C} \Lambda(x) d\nu \right)
      d\varpi 
      & \text{disintegration formula (\ref{eq:disint})}\\ 
    & = \int_{\nu \in \V C} \Lambda( \alpha (\nu)) d \varpi
    & \alpha (\nu) \text{ is a barycentre of }\nu
    \\
    & = \int_{x\in C} \Lambda (x)d\V \alpha (\varpi)
    & \text{item (vi) in Lemma~\ref{propertyofint}.}
  \end{align*}
  This shows that $\alpha (m_C (\varpi))$ is also a barycentre of
  $\V\alpha (\varpi)$.  Since barycentres are unique,
  $\alpha (m_C (\varpi)) = \alpha (\V\alpha (\varpi))$.

  Finally, for all $x, y \in C$ we observe that
  $\alpha (\delta_x+\delta_y)$ and $x+y$ are both barycentres of
  $\delta_x+\delta_y$, hence they are equal.  Similarly, for every
  $r \in \Rplus$, $\alpha (r \delta_x) = r \cdot x$.  Hence the
  induced cone structure coincides with the original cone structure on
  $C$.  We conclude by Proposition~\ref{prop:V:alg:bary}.
\end{proof}

This simplifies in the case of locally linear cones, where the
uniqueness of barycentres and the continuity of the barycentre map are
automatic.  
\begin{proposition}
  \label{prop:V:bary:alg:loclin}
  Let $C$ be a locally linear semitopological cone such that every
  continuous valuation $\nu$ on $C$ has a barycentre $b_\nu$.  The
  \emph{barycentre map} $\beta \colon \V C \to C$, defined by
  $\beta (\nu) = b_\nu$, is the structure map of a $\V$-algebra (and
  in particular, $C$ is sober and a topological cone).
\end{proposition}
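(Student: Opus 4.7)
The plan is to reduce this to Proposition~\ref{prop:V:bary:alg} by showing two things: (a) barycentres are unique on $C$, so that $\beta$ is well-defined as a function, and (b) the barycentre map $\beta$ is continuous. Uniqueness is essentially automatic from the preceding material: local linearity entails local convexity, which entails convex-$T_0$ by Corollary~\ref{coroab}, and Lemma~\ref{lemma:bary:unique} then gives uniqueness of barycentres on convex-$T_0$ semitopological cones. So the only real work is establishing continuity.

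To prove continuity of $\beta$, I will use the local linearity hypothesis directly: $C$ has a subbase of open half-spaces, so it suffices to verify that $\beta^{-1}(H)$ is open for every open half-space $H$ of $C$. The cases $H = \emptyset$ and $H = C$ are trivial, so assume $H$ is a proper nonempty open half-space. Then $C \setminus H$ is nonempty and convex (by the definition of half-space) and $H$ is open and convex, so Keimel's Separation Theorem (Theorem~\ref{keimelsep}) produces a lower semi-continuous linear functional $\Lambda \colon C \to \exr$ with $\Lambda(x) \leq 1 < \Lambda(y)$ for all $x \in C \setminus H$ and $y \in H$. A direct check then shows $H = \Lambda^{-1}((1,\infty])$: every $y \in H$ has $\Lambda(y) > 1$, and conversely $\Lambda(y) > 1$ forces $y \notin C \setminus H$, hence $y \in H$.

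With this half-space characterisation in hand, continuity of $\beta$ reduces to a computation using the defining property of barycentres. For any $\nu \in \V C$, since $\beta(\nu)$ is a barycentre of $\nu$, we have $\Lambda(\beta(\nu)) = \int \Lambda~d\nu$. Hence
\[
  \beta^{-1}(H) = \{\nu \in \V C \mid \Lambda(\beta(\nu)) > 1\} = \{\nu \in \V C \mid \textstyle\int \Lambda~d\nu > 1\} = [\Lambda > 1],
\]
and this is open in the weak topology on $\V C$ as noted just after Lemma~\ref{propertyofint}. Therefore $\beta$ is continuous.

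Having verified that $\beta \colon \V C \to C$ is a continuous map sending each $\nu$ to its unique barycentre, Proposition~\ref{prop:V:bary:alg} applies verbatim, and $(C,\beta)$ is a $\V$-algebra; the "in particular" clauses (sobriety, topological cone structure) then come for free from the same proposition together with Proposition~\ref{prop:V:alg:bary}. The main potential obstacle is the continuity step, and specifically the identification of open half-spaces as inverse images under lower semi-continuous linear functionals; once the separation theorem is invoked and the reduction to $[\Lambda > 1]$ is made, the rest is bookkeeping.
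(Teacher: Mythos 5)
Your proof is correct and follows essentially the same route as the paper's: uniqueness of barycentres via Corollary~\ref{coroab} and Lemma~\ref{lemma:bary:unique}, continuity by checking preimages of subbasic open half-spaces using Keimel's Separation Theorem to write $H = \Lambda^{-1}((1,\infty])$ and then identifying $\beta^{-1}(H)$ with the weakly open set $[\Lambda > 1]$, and finally an appeal to Proposition~\ref{prop:V:bary:alg}. The only (harmless) difference is that you spell out the trivial case $H=\emptyset$ and the verification that $H=\Lambda^{-1}((1,\infty])$, which the paper leaves implicit.
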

\begin{proof}
  Since $C$ is locally linear, it is locally convex hence convex-$T_0$
  (Corollary~\ref{coroab}).  Therefore Lemma~\ref{lemma:bary:unique}
  applies, showing that the barycentre $b_\nu$ is unique for every
  $\nu \in \V X$, hence that $\beta$ is well-defined.
  
  We now prove that that $\beta$ is continuous.  Let $H$ be an open
  half-space of $C$; since $C$ is locally linear, it suffices to show
  that $\beta^{-1} (H)$ is open.  If $H=C$, then
  $\beta^{-1} (H) = \V X$ is open.  Otherwise, by
  Theorem~\ref{keimelsep}, there exists a linear lower semi-continuous
  function $h\colon  C\to \exr$ such that $h(a)\leq 1< h(b)$ for all
  $a\in C\setminus H$ and $b\in H$.  Then $H = h^{-1} ((1, \infty])$,
  and $\beta^{-1} (H)$ is then the set of continuous valuations $\nu$
  such that $h (\beta (\nu)) > 1$.  By the definition of barycentres,
  $h (\beta (\nu)) = \int_{x \in C} h(x) d\nu$, so $\beta^{-1} (H)$ is
  equal to the open set $[h > 1]$.  By Proposition~\ref{prop:V:bary:alg},
  $\beta$ is the structure map of a $\V$-algebra.  It follows that $C$
  is sober, and a topological cone, by
  Proposition~\ref{prop:V:alg:bary}.
\end{proof}

\begin{example}
  The extended real numbers $\exr$ with the map
  $\mu\mapsto \int_{x\in \exr}x~d\mu$ is a $\V$-algebra, since $\exr$
  with the Scott topology is a locally linear topological cone.
\end{example}

\begin{example} 
  \label{exa:bary:L}
  Let $L$ be a complete lattice with its Scott topology and the cone
  structure of Example~\ref{exa:L}.
  \begin{enumerate}
  \item If $L$ is a continuous, non-hyper-continuous dcpo (see
    Example~\ref{exa:L:locconv}), then
    $\beta \colon \nu \mapsto \supp \nu$ is the structure map of a
    $\V$-algebra on $L$, as a consequence of the following
    proposition, although $L$ is not locally linear.
  \item If $L$ is not weakly Hausdorff (see below), then $\beta (\nu)$
    is the unique barycentre of $\nu$ for every $\nu \in \mathcal VL$,
    but the cone structure of $L$ is induced by no $\V$-algebra, again
    by the following proposition.  We will also see that every weakly
    Hausdorff complete lattice is sober, hence Isbell's example of a
    non-sober complete lattice \cite{isbell82} is not weakly
    Hausdorff.
  \end{enumerate}
\end{example}
A \emph{weakly Hausdorff} space is a topological space $X$ such that
for all $x, y \in X$, for every open subset $U$ of $X$ that contains
$\upc x \cap \upc y$, there are open neighborhoods $V$ of $x$ and $W$
of $y$ such that $V \cap W \subseteq U$ \cite[Lemma~6.6]{keimel05}.

\begin{proposition}
  \label{prop:bary:L}
  Let $L$ be a complete lattice with its Scott topology and the cone
  structure of Example~\ref{exa:L}.  For every $\nu \in \mathcal VL$,
  let $\beta (\nu) = \bigvee \supp \nu$.  The following are
  equivalent:
  \begin{enumerate}
  \item there is a $\V$-algebra structure on $L$ that induces its cone structure;
  \item $\beta$ is the structure map of a $\V$-algebra on $L$;
  \item $\beta$ is continuous;
  \item $\vee \colon L \times L \to L$ is jointly continuous;
  \item $L$ is weakly Hausdorff.
  \end{enumerate}
  In particular, (i)--(v) hold if $L$ is core-compact, and (i)--(v)
  imply that $L$ is sober.
\end{proposition}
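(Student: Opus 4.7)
The plan is to close the cycle (i)$\Leftrightarrow$(ii)$\Leftrightarrow$(iii)$\Leftrightarrow$(iv)$\Leftrightarrow$(v) and then treat the two ``In particular'' consequences. The key ingredients are all in place: Example~\ref{exa:L:barycentre} identifies $\beta(\nu)=\bigvee \supp \nu$ as the \emph{unique} barycentre of $\nu$ on $L$; Lemma~\ref{struisbary} and Proposition~\ref{prop:V:bary:alg} link algebras to barycentre maps; and Proposition~\ref{prop:V:alg:bary} handles sobriety.

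The easy half of the cycle goes as follows. (ii)$\Rightarrow$(i) is trivial. For (i)$\Rightarrow$(ii), any algebra $\alpha$ recovering the given cone structure sends each $\nu$ to some barycentre by Lemma~\ref{struisbary}, so $\alpha=\beta$ by the uniqueness in Example~\ref{exa:L:barycentre}. (ii)$\Rightarrow$(iii) is by definition, and (iii)$\Rightarrow$(ii) follows directly from Proposition~\ref{prop:V:bary:alg} using uniqueness again. For (iii)$\Rightarrow$(iv), I would factor $\vee$ as
\[
  L \times L \xrightarrow{\eta_L \times \eta_L} \V L \times \V L \xrightarrow{\;+\;} \V L \xrightarrow{\beta} L,
\]
with $\eta_L(x)=\delta_x$, and observe $\beta(\delta_x+\delta_y)=\bigvee\supp(\delta_x+\delta_y)=\bigvee(\dc x\cup\dc y)=x\vee y$. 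The equivalence (iv)$\Leftrightarrow$(v) is pure rewriting: Scott-opens are upper sets, so $x\vee y\in U$ iff $U\supseteq \upc(x\vee y)=\upc x\cap\upc y$; and for upper $V,W$, the condition ``$v\vee w\in U$ for all $v\in V$, $w\in W$'' collapses to $V\cap W\subseteq U$ (using $v\vee w\geq v,w$ in one direction and $v=w$ in the other). Under these rewrites, joint continuity at $(x,y)$ becomes literally the weakly Hausdorff condition at $(x,y)$.

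The main obstacle is (iv)$\Rightarrow$(iii), namely showing that $\beta^{-1}(U)$ is open in $\V L$ for every Scott-open $U\subseteq L$. I would write $\beta^{-1}(U)$ as a union of basic opens of the weak topology. Given $\nu$ with $\bigvee\supp\nu\in U$, the finite suprema $\bigvee F$ for finite $F\subseteq\supp\nu$ form a directed family with that supremum, so Scott-openness of $U$ yields $x_1,\ldots,x_n\in\supp\nu$ with $x_1\vee\cdots\vee x_n\in U$. Iterating joint continuity of $\vee$ produces opens $V_i\ni x_i$ with $V_1\cap\cdots\cap V_n\subseteq U$, and $x_i\in\supp\nu$ forces $\nu(V_i)>0$, placing $\nu$ in the basic open $[V_1>0]\cap\cdots\cap[V_n>0]$. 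Conversely, any $\nu'$ in such a basic open meets each $V_i$ on $\supp\nu'$; picking $x_i'\in V_i\cap\supp\nu'$, upward closure of each $V_i$ puts $x_1'\vee\cdots\vee x_n'$ in $V_1\cap\cdots\cap V_n\subseteq U$, and this sup lies below $\bigvee\supp\nu'$, so $\bigvee\supp\nu'\in U$ by upward closure of~$U$. The delicate part is precisely this dovetailing of Scott-openness, joint continuity of $\vee$, and the description of the support.

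For the ``In particular'' consequences: if $L$ is core-compact then $\sigma L$ is a continuous lattice, which for a complete lattice $L$ forces $L$ itself to be continuous (a standard result in \cite{gierz03}), hence a c-space in its Scott topology; Ershov's theorem (Remark~\ref{rem:ershov}) then upgrades the separately continuous $\vee$ to a jointly continuous one, giving (iv). Under (ii), $L$ is a $\V$-algebra, so Proposition~\ref{prop:V:alg:bary} yields sobriety.
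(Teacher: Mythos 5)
Your proof is correct and follows essentially the same route as the paper's: the same reliance on Example~\ref{exa:L:barycentre} for uniqueness of barycentres, the same use of Propositions~\ref{prop:V:bary:alg} and~\ref{prop:V:alg:bary}, and an identical argument for the crucial implication (iv)$\Rightarrow$(iii) via the observation that an open set meets $\supp\nu$ iff it has positive $\nu$-measure. The only cosmetic differences are that you obtain (iv) from (iii) by factoring $\vee$ as $\beta\circ{+}\circ(\eta_L\times\eta_L)$ where the paper gets it from (i) via the fact that every $\V$-algebra is a topological cone, and that you derive the core-compact case from continuity of $L$ plus Ershov's theorem where the paper cites \cite[Corollary~II-4.15]{gierz03} directly.
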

\begin{proof}
  We have seen in Example~\ref{exa:L:barycentre} that $\beta (\nu)$ is
  the barycentre of $\nu$.  This barycentre is unique since $L$ is
  locally convex (Example~\ref{exa:L:locconv}).  The implication
  (iii)$\limp$(ii) then follows from
  Proposition~\ref{prop:V:bary:alg}.  The converse implication is
  trivial.  The equivalence of (i) with (ii) follows from
  Corollary~\ref{corl:alpha:unique}.

  Since every $\V$-algebra is a topological cone
  (Proposition~\ref{prop:V:alg:bary}), (i) implies (iv).
  We now assume (iv), and aim to show (iii).  We will repeatedly use
  the following fact: an open set $U$ intersects $\supp \nu$ if and
  only if $\nu (U) > 0$.  Indeed, $U$ intersects $\supp \nu$ if and
  only if $U$ is not included in the largest open set with
  $\nu$-measure zero.
  
  Let $\nu \in \mathcal VL$ and $V$ be an open neighborhood of
  $\bigvee \supp \nu$.  We will exhibit an open neighborhood $U$ of
  $\nu$ such that $\bigvee \supp \mu \in V$ for every $\mu \in U$.

  Since $V$ is Scott-open, there are finitely many points
  $x_1, \cdots, x_n \in \supp \nu$ whose supremum
  $\bigvee_{i=1}^n x_i$ is in $V$.  By (iv), there are open
  neighborhoods $U_i$ of $x_i$, for each $i$, such that for all
  $y_1 \in U_1$, \ldots, $y_n \in U_n$, $\bigvee_{i=1}^n y_i$ is in
  $V$.  Let us define $U$ as $\bigcap_{i=1}^n [U_i > 0]$.  For every
  $i$, $U_i$ intersects $\supp \nu$ at $x_i$, so $\nu (U_i) > 0$.  It
  follows that $\nu$ is in $U$.  For every $\mu \in U$, we have
  $\mu (U_i) > 0$ for each $i$, so $U_i$ must meet $\supp \mu$, say at
  $y_i$.  Then $\bigvee \supp \mu \geq \bigvee_{i=1}^n y_i \in V$, so
  $\bigvee \supp \mu$ is in $V$.  This shows (iii).

  The equivalence between (iv) and (v) is immediate, since
  $\upc x \cap \upc y = \upc (x \vee y)$.

  As for the last part, the binary supremum operation is jointly
  continuous on any core-compact complete lattice by
  \cite[Corollary~II-4.15]{gierz03}, so if $L$ is core-compact then
  (iv) holds.  If (i) holds, then $L$ is sober by
  Proposition~\ref{prop:V:alg:bary}.
%
\end{proof}

Here is a final example.  
In that case, $\Lform X$ is locally linear (see Example~\ref{exa:LX:locconv}).
\begin{proposition}
  \label{prop:bary:LX}
  For every core-compact space $X$, for every continuous valuation
  $\nu$ on $\Lform X$, the map
  $\beta (\nu) \colon x \in X \mapsto \int_{f \in \Lform X} f (x)
  d\nu$ is the unique barycenter of $\nu$ on $\Lform X$ with its Scott
  topology.  The map $\beta$ is the structure map of a $\V$-algebra on
  $\Lform X$, and the cone structure it induces on $\Lform X$ is the
  usual one.
\end{proposition}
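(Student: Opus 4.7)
The plan is to invoke Proposition~\ref{prop:V:bary:alg:loclin}. By Example~\ref{exa:LX:locconv}, $\Lform X$ with its Scott topology is a locally linear topological cone since $X$ is core-compact, so it suffices to show that every $\nu \in \V(\Lform X)$ admits a barycentre; the natural candidate is $\beta(\nu)$ as defined in the statement. Uniqueness is then automatic because local linearity implies local convexity and hence the convex-$T_0$ property (Corollary~\ref{coroab}), so Lemma~\ref{lemma:bary:unique} ensures that $\beta(\nu)$ is the unique barycentre. Once the barycentre claim is established, Proposition~\ref{prop:V:bary:alg:loclin} yields the $\V$-algebra structure and Proposition~\ref{prop:V:bary:alg} guarantees that the induced cone structure coincides with the original one. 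The latter can also be checked directly from item~(i) of Lemma~\ref{propertyofint}: $\beta(\delta_f + \delta_g)(x) = f(x) + g(x)$ and $\beta(r\delta_f)(x) = r\cdot f(x)$ for all $x \in X$, $f,g \in \Lform X$, $r \in \Rplus$.

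The first preliminary task is to verify that $\beta(\nu) \in \Lform X$, i.e., that $x \mapsto \int_{f} f(x)\, d\nu$ is lower semi-continuous on $X$. The key ingredient is that, since $X$ is core-compact, $\Lform X$ is a continuous lattice and hence a c-space, so by Remark~\ref{rem:ershov} the evaluation map $\mathrm{ev} \colon X \times \Lform X \to \creal$, $(x,f)\mapsto f(x)$, is jointly continuous. Lower semi-continuity of $\beta(\nu)$ can then be obtained by a step-function approximation in the same spirit as the proof of Proposition~\ref{folklore}: given $x_0 \in X$ with $\beta(\nu)(x_0) > r$, approximate $\mathrm{ev}$ from below near $(x_0, \cdot)$ by a step function $\epsilon \sum_k \chi_{W_k}$ with $W_k$ open in $\Lform X$ carrying enough $\nu$-mass, and transfer the resulting finitely many open conditions to an open neighbourhood of $x_0$ in $X$ via joint continuity of $\mathrm{ev}$.

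The core of the argument is to prove that $\beta(\nu)$ is a barycentre. Let $\Lambda \colon \Lform X \to \creal$ be lsc linear. By the Riesz-type representation recalled in Section~\ref{sec:riesz}, there is a unique $\mu \in \V X$ with $\Lambda(f) = \int_{x} f(x)\, d\mu$ for every $f \in \Lform X$. The required identity $\Lambda(\beta(\nu)) = \int \Lambda\, d\nu$ then reduces to the exchange
\[
  \int_{x \in X} \!\left(\int_{f \in \Lform X} f(x)\, d\nu\right) d\mu
  \;=\; \int_{f \in \Lform X} \!\left(\int_{x \in X} f(x)\, d\mu\right) d\nu,
\]
which is a standard Fubini-type theorem for continuous valuations applied to the jointly lower semi-continuous map $(x,f) \mapsto f(x)$. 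This Fubini interchange is the only genuine obstacle of the proof; once it is in hand, everything else is a direct application of the machinery already developed in the previous sections.
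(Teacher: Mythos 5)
Your proposal is correct and follows essentially the same route as the paper's detailed proof: establish that $\Lform X$ is locally linear, show $\beta(\nu)\in\Lform X$ via joint continuity of evaluation (Ershov's theorem, since $\Lform X$ is a c-space) together with a step-function argument, represent each lsc linear $\Lambda$ as integration against some $\mu\in\V X$ by the Riesz-type theorem, and conclude by a Fubini interchange for jointly continuous integrands (the paper cites Jones's version of Fubini's theorem here) before invoking Proposition~\ref{prop:V:bary:alg:loclin}. The only difference is cosmetic: the paper additionally sketches a shortcut through Kirch's characterisation of the $\mathcal V$-algebras on continuous dcpos, which you omit.
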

\begin{proof}
  Kirch characterised the algebras of the $\mathcal V$ monad on the
  category \textbf{CONT} of continuous dcpos: the Eilenberg-Moore
  category of $\mathcal V$ on \textbf{CONT} is equivalent to the
  category of continuous d-cones \cite[Satz~7.1]{kirch93}.  A d-cone
  is a dcpo with a cone structure whose addition and scalar
  multiplication are Scott-continuous.  Every continuous d-cone is a
  topological cone, as a consequence of Ershov's theorem (see
  Remark~\ref{rem:ershov}, and
  \cite[Corollary~6.9~(c)]{Keimel:topcones2}).  Since $X$ is
  core-compact, $\Lform X$ is a continuous d-cone, hence there is a
  map $\beta \colon \mathcal V (\Lform X) \to \Lform X$ that turns
  $(\Lform X, \beta)$ into a $\mathcal V$-algebra.  Using another
  result of Kirch \cite[Satz~8.6]{kirch93}, which states that for
  every continuous dcpo $Y$ in its Scott topology, the Scott and weak
  topologies agree on $\mathcal VY$, $\beta$ is a $\V$-algebra map.
  The value $\beta (\nu)$ is given as a directed supremum of
  barycentres of simple valuations way-below $\nu$, and one can check
  that this is equal to $\int_{f \in \Lform X} f (x) d\nu$.  Here is
  an alternative proof, which the reader may find interesting.
  
  We will use Jones' version of Fubini's theorem
  \cite[Theorem~3.17]{jones90}.  This states that for every (jointly)
  continuous map $f \colon X \times Y \to \creal$, where $\creal$ has
  the Scott topology, for all continuous valuations $\mu$ on $X$ and
  $\nu$ on $Y$,
  \[
    \int_{x \in X} \left(\int_{y \in Y} f (x, y) d\nu\right) d\mu
    = \int_{(x,y) \in X \times Y} f (x, y) d (\mu \times \nu)
    = \int_{y \in Y} \left(\int_{x \in X} f (x, y) d\mu\right) d\nu,
  \]
  where $\mu \times \nu$ is the uniquely determined \emph{product
    valuation}.  Implicit in that theorem is the following fact: $(*)$
  the maps $y \in Y \mapsto \int_{x \in X} f (x, y) d\mu$ and
  $x \in X \mapsto \int_{y \in Y} f (x, y) d\nu$ are lower
  semi-continuous.  The latter can be shown using step functions as
  in the proof of Proposition~\ref{folklore}.

  Let $g (x) = \int_{f \in \Lform X} f (x) d\nu$.  This makes
  sense because the map $f \in \Lform X \mapsto f (x)$ is
  Scott-continuous, hence lower semi-continuous, for every $x \in X$.
  In order to show that $g$ is continuous, we first notice that
  $\App \colon \Lform X \times X \to \creal$, which maps $(f, x)$ to
  $f (x)$, is (jointly) continuous.  Indeed, $\Lform X$ is a
  continuous lattice (see Example~\ref{exa:LX:top}), hence a c-space.
  $\App$ is clearly separately continuous, and then jointly continuous
  by Ershov's theorem (see Remark~\ref{rem:ershov}).  By $(*)$, the
  map $x \in X \mapsto \int_{f \in \Lform X} \App (f, x) d\nu$ is
  lower semi-continuous.  But that map is simply $g$.

  Now that $g$ is in $\Lform X$, we check that it is a barycenter of
  $\nu$.
  Let $\Lambda$ be any lower semi-continuous function from
  $\Lform X$ to $\creal$.  $\Lambda$ is integration with respect to
  some uniquely defined continuous valuation $\mu$ on $X$, by the
  Riesz-type representation theorem mentioned earlier.  Then:
  \begin{align*}
    \Lambda (g)
    & = \int_{x \in X} g (x) d\mu \\
    & = \int_{x \in X} \left(\int_{f \in \Lform X} \App (f, x) d\nu\right)
      d\mu \\
    & = \int_{f \in \Lform X} \left(\int_{x \in X} \App (f, x) d\mu\right)
      d\nu
    & \text{by Jones's version of Fubini's theorem} \\
    & = \int_{f \in \Lform X} \Lambda (f) d\nu,
  \end{align*}
  showing that, indeed, $g$ is a barycenter of $\nu$.  Note that
  Jones' version of Fubini's theorem applies, crucially, because
  $\App$ is \emph{jointly} continuous.

  There is nothing more to prove: we merely apply
  Proposition~\ref{prop:V:bary:alg:loclin}, using the fact that
  $\Lform X$ is locally linear.
%
%
\end{proof}


\subsection{The morphisms of algebras of $\V$}

Adapting the definition of morphisms of algebras in our setting, a
morphism $f$ between two $\V$-algebras $(X, \alpha)$ and $(Y,\beta)$
is a continuous function $f\colon X\to Y$ such that
$\beta\circ \V f = f\circ \alpha$. Considering the cone structure of
$X$ and $Y$ induced by $\alpha $ and $\beta$
(Proposition~\ref{prop:V:alg:bary}), respectively, we will see that
$f$ is a linear map between $X$ and $Y$.  Indeed, for any $a, b\in X$
and $r\in \mathbb R_+$,
\begin{align*}
  f(a+b)
  & =f(\alpha(\delta_a+\delta_b))
  & \text{definition of addition} \\
  &= \beta(\V f(\delta_a+\delta_b))
  & \text{$f$ is a morphism of algebras}
  \\
  & = \beta(\delta_{f(a)}+\delta_{f(b)})
  & \text{naturality of the unit}
  \\
  &= f(a)+f(b)
  & \text{definition of addition.} \\
\end{align*}
Similarly, we have $f(r\cdot a)=r\cdot f(a)$. Conversely, we want to
know whether continuous linear maps are exactly the $\V$-algebra
morphisms. To prove this, however, we need to assume that $Y$ is
convex-$T_0$, and not just weakly locally convex.

\begin{proposition}
  Let $(X, \alpha), (Y,\beta)$ be two $\V$-algebras, viewed as
  topological cones in the sense of Proposition~\ref{prop:V:alg:bary}.
  If $Y$ is convex-$T_0$, then the $\V$-algebra morphisms from
  $(X,\alpha)$ to $(Y,\beta)$ are precisely the continuous linear maps
  between them.
\end{proposition}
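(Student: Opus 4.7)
The forward direction—every $\V$-algebra morphism is a continuous linear map—is already carried out in the paragraph preceding the statement. So my plan focuses on the converse: given continuous linear $f \colon X \to Y$, show that $f \circ \alpha = \beta \circ \V f$.

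The strategy is to use barycentres as the bridge. By Lemma~\ref{struisbary}, for any $\mu \in \V X$, $\alpha(\mu)$ is a barycentre of $\mu$ in $X$, and $\beta(\V f(\mu))$ is a barycentre of $\V f(\mu)$ in $Y$. So it suffices to prove that $f(\alpha(\mu))$ is \emph{also} a barycentre of $\V f(\mu)$ in $Y$, and then invoke uniqueness of barycentres in the convex-$T_0$ cone $Y$ via Lemma~\ref{lemma:bary:unique}.

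To verify the barycentre condition, fix any lower semi-continuous linear $\Lambda \colon Y \to \exr$. Since $f$ is continuous and linear, $\Lambda \circ f \colon X \to \exr$ is again lower semi-continuous and linear. Using that $\alpha(\mu)$ is a barycentre of $\mu$ in $X$,
\[
  \Lambda(f(\alpha(\mu))) = (\Lambda \circ f)(\alpha(\mu)) = \int_{x \in X} (\Lambda \circ f)(x)\, d\mu,
\]
and by item~(vi) of Lemma~\ref{propertyofint}, this last integral equals $\int_{y \in Y} \Lambda(y)\, d(\V f(\mu))$. Thus $f(\alpha(\mu))$ satisfies the defining property of a barycentre of $\V f(\mu)$.

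There is no real obstacle: the argument is essentially a one-line chase through the definition of barycentre plus the change-of-variable formula for integrals. The convex-$T_0$ hypothesis on $Y$ is used exactly once and exactly where expected—to force $f(\alpha(\mu)) = \beta(\V f(\mu))$ from the fact that both points are barycentres of $\V f(\mu)$. No hypothesis on $X$ beyond its being a $\V$-algebra is needed, which matches the statement.
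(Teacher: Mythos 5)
Your argument is correct and is essentially identical to the paper's proof: both show that $f(\alpha(\mu))$ is a barycentre of $\V f(\mu)$ by composing with an arbitrary lower semi-continuous linear $\Lambda$, applying the barycentre property of $\alpha(\mu)$ and item~(vi) of Lemma~\ref{propertyofint}, and then concluding by uniqueness of barycentres in the convex-$T_0$ cone $Y$. No gaps.
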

\begin{proof}
  Let $f \colon X\to Y$ be a linear map. For every lower
  semi-continuous linear map $\Lambda \colon Y\to \exr$,
  $\Lambda \circ f \colon X\to \exr$ is lower semi-continuous and
  linear. Since structure maps send valuations to their barycentres by
  Lemma~\ref{struisbary}, for any continuous valuation $\mu\in \V X$
  we have,
  \begin{align*}
    \Lambda (f(\alpha (\mu)))
    & = (\Lambda \circ f) (\alpha (\mu)) \\
    & = \int \Lambda\circ fd\mu
    & \alpha(\mu) \text{ is a barycentre of }\mu
    \\
    &= \int \Lambda ~d(\V f(\mu))
    & \text{item (vi) in Lemma~\ref{propertyofint}}
    \\
    &= \Lambda (\beta( \V f(\mu)))
    & \beta( \V f(\mu)) \text{ is a barycentre of }\V f(\mu).
  \end{align*}
  Since $Y$ is convex-$T_0$, we use Corollary~\ref{coroab} to conclude
  that $f(\alpha (\mu)) =\beta( \V f(\mu))$.
\end{proof}

\section{The algebras of $\VF$ and $\VP$}\label{fandp}

Besides the space $\V X$ of continuous valuations on any topological
space~$X$, Heckmann also considered its subspaces $\VF X$ of simple
valuations and $\VP X$ of point-continuous valuations
on~$X$~\cite{heckmann96}.  In the same paper he showed that $\VP X$ is
the sobrification of $\VF X$~\cite[Theorem 5.5]{heckmann96}. We will
see that $\VF$ and $\VP$ can also be extended to monads on the
category $\ctop$.

We have seen simple valuations in~Example~\ref{finitevaluations}. We
proceed to define point-continuous valuations.

For a topological space $X$, according to Heckmann~\cite{heckmann96},
one considers, instead of the Scott topology, the \emph{point
  topology} on~$\mathcal OX$ determined by the subbasic open sets
$\mathcal O(x)$, $x\in X$, where
$\mathcal O(x) = \set{U\in \mathcal OX}{x\in U}$ for each $x\in X$.
We denote $\mathcal OX$ with the point topology by
$\mathcal O_{\mathrm p}X$.  One can equate $\Open X$ with the set of
continuous maps from $X$ to Sierpi\'nski space $\Sierp = \{0, 1\}$
(with the Scott topology of $\leq$), and then the point topology is
the subspace topology induced by the inclusion into $\Sierp^X$.

\begin{definition}
  A \emph{point-continuous valuation} $\mu$ on $(X, \mathcal OX)$ is a
  valuation that is continuous from $\mathcal O_{\mathrm p}X$
  to~$\exr$. The set of all point-continuous valuations on $X$ is
  denoted by $\VP X$.
\end{definition}

One easily sees that every simple valuation is point-continuous, and
every point-continuous valuation is a continuous valuation, since the
Scott topology on~$\mathcal OX$ is finer than the point topology.

In what follows, we consider $\VF X$ and $\VP X$ as subspaces
of~$\V X$, that is, the topologies considered are the subspace
topologies induced from the weak topology on~$\V X$.

\begin{proposition}
  Let $f\colon  X\to Y$ be a continuous function between topological
  spaces~$X$ and~$Y$. Then the map
  $\mathcal V_* f\colon \mu\mapsto (U\in \mathcal OY\mapsto \mu(f^{-1}(U)))$
  is continuous from $\mathcal V_*X$ to $\mathcal V_*Y$, where
  $\mu\in \mathcal V_* X$ and $*$ is ${\mathrm p}$ or ${\simple}$.
\end{proposition}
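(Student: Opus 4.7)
The plan is to reduce the claim to Proposition~\ref{prop:V:functor}, which already establishes that $\V f\colon \V X \to \V Y$ is continuous in the weak topology. Since $\VF X$, $\VP X$, $\VF Y$, $\VP Y$ all carry the subspace topologies inherited from the corresponding weak topologies, once we know the restriction of $\V f$ corestricts into $\V_* Y$, continuity of $\V_* f$ is automatic. So the only thing to check is that $\V f$ maps $\VF X$ into $\VF Y$ and $\VP X$ into $\VP Y$.

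For the simple-valuation case, a direct computation suffices: given $\mu = \sum_{i=1}^n r_i \delta_{x_i} \in \VF X$, for every $U \in \Open Y$ we have
\[
  \V f(\mu)(U) = \mu(f^{-1}(U)) = \sum_{i=1}^n r_i \delta_{x_i}(f^{-1}(U)) = \sum_{i=1}^n r_i \delta_{f(x_i)}(U),
\]
so $\V f(\mu) = \sum_{i=1}^n r_i \delta_{f(x_i)}$ lies in $\VF Y$.

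For the point-continuous case, the key observation is that the preimage operation $f^{-1}\colon \Open Y \to \Open X$ is itself continuous when $\Open Y$ and $\Open X$ are equipped with their point topologies. Indeed, for each $x \in X$ the preimage of the subbasic open $\mathcal O(x) \subseteq \mathcal O_{\mathrm p}X$ is
\[
  (f^{-1})^{-1}(\mathcal O(x)) = \{V \in \Open Y \mid x \in f^{-1}(V)\} = \{V \in \Open Y \mid f(x) \in V\} = \mathcal O(f(x)),
\]
which is subbasic open in $\mathcal O_{\mathrm p}Y$. Hence for any $\mu \in \VP X$, point-continuity of $\mu$ means $\mu$ is continuous as a map $\mathcal O_{\mathrm p}X \to \exr$, and the composite
\[
  \V f(\mu) \;=\; \mu \circ f^{-1} \;\colon\; \mathcal O_{\mathrm p}Y \xrightarrow{f^{-1}} \mathcal O_{\mathrm p}X \xrightarrow{\mu} \exr
\]
is continuous, placing $\V f(\mu)$ in $\VP Y$.

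There is no real obstacle here: the statement is essentially a functoriality check. The only point that needs a brief justification is that preimage is continuous for the point topologies, which is the direct analogue of the well-known fact that $f^{-1}\colon \Open Y \to \Open X$ is Scott-continuous (indeed a frame homomorphism).
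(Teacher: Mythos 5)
Your proposal is correct, and its overall architecture matches the paper's: both reduce continuity of $\mathcal V_* f$ to the already-established continuity of $\V f$ (Proposition~\ref{prop:V:functor}) plus the subspace topology, so that the only real content is checking that $\V f$ corestricts to $\VF Y$ (resp.\ $\VP Y$). The simple-valuation case is the same direct computation in both (it is recorded as Remark~\ref{rem:VF:f} in the paper). Where you genuinely diverge is the point-continuous case. The paper argues concretely: given $\mu(f^{-1}(U))>r$, it extracts a finite set $F$ with $f^{-1}(U)\in\bigcap_{x\in F}\mathcal O(x)$ on which $\mu>r$, and then checks by hand that $\bigcap_{y\in f(F)}\mathcal O(y)$ is a neighbourhood of $U$ witnessing $\V f(\mu)(W)>r$. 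You instead isolate the structural fact that $f^{-1}\colon\mathcal O_{\mathrm p}Y\to\mathcal O_{\mathrm p}X$ is continuous for the point topologies, via $(f^{-1})^{-1}(\mathcal O(x))=\mathcal O(f(x))$, and conclude that $\V f(\mu)=\mu\circ f^{-1}$ is point-continuous as a composite of continuous maps. The two arguments have identical mathematical content --- the paper's finite set $F$ is exactly the unfolding of your preimage computation on a basic open $\bigcap_{x\in F}\mathcal O(x)$ --- but your packaging is cleaner and reusable: it exhibits $\Open_{\mathrm p}$ as (contravariantly) functorial, which is the point-topology analogue of $f^{-1}$ being a frame homomorphism, whereas the paper's version stays at the level of an explicit neighbourhood chase. (Note that you do not need to re-verify that $\mu\circ f^{-1}$ is a valuation, since every point-continuous valuation is in particular a continuous valuation and $\V f$ is already known to be well-defined on those; your proof implicitly and correctly relies on this.)
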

 \begin{proof}
   The only difficult point is to show that $\mathcal V_{\mathrm p} f$
   sends point-continuous valuations to point-continuous
   valuations. To this end, let $\mu$ be any point-continuous
   valuation and $U$ be any open subset in $Y$, and $r$ be any
   positive number in~$\mathbb R_+$ with
   $\mathcal V_{\mathrm p} f(\mu)(U)> r$.  By definition,
   $\mu (f^{-1}(U))>r$.  Since $\mu$ is point-continuous, we can find
   a finite subset $F$ of points such that
   $f^{-1}(U)\in \bigcap_{x\in F}\mathcal O(x)$ and for every open
   subset $V\in \bigcap_{x\in F}\mathcal O(x)$, $\mu(V)>r$.  We claim
   that $\bigcap_{y\in f(F)}\mathcal O(y)$ is an open set containing
   $U$ and such that, for every
   $W\in \bigcap_{y\in f(F)}\mathcal O(y)$, $\VP f(\mu)(W)>r$. The
   former is obvious since
   $f^{-1}(U)\in \bigcap_{x\in F}\mathcal O(x)$ means that
   $F\subseteq f^{-1}(U)$, i.e., $f(F)\subseteq U$.  For the latter
   claim, we know that $f(F)\subseteq W$, so we have
   $f^{-1}(W)\in \bigcap_{x\in F} \mathcal O(x)$.  From the
   point-continuity of $\mu$, we have $\mu (f^{-1}(W))>r$, hence
   $\mathcal V_{\mathrm p} f(\mu)(W)>r$.
   
   $\VP f$ is continuous since $\V f$ is
   (Proposition~\ref{prop:V:functor}).
 \end{proof}

 \begin{remark}
   \label{rem:VF:f}
   The following formula holds:
   $\VF f (\sum_{i=1}^n a_i \delta_{x_i}) = \sum_{i=1}^n a_i \delta_{f
     (x_i)}$.
 \end{remark}

\begin{proposition}
  For all topological spaces $X$ and $Y$, and for every continuous
  function $f \colon X\to \mathcal V_* Y$, the map
  $$f_*^\dagger \colon \mu\mapsto \left(U\mapsto \int_{x\in X}f(x)(U)d\mu\right)
  \colon \mathcal V_*X \to \mathcal V_*Y$$ is well-defined and
  continuous, where $*$ is ${\mathrm p}$ or ${\simple}$.
\end{proposition}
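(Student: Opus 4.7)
The strategy is to reduce both well-definedness and continuity to Proposition~\ref{folklore}. Since $\VF X$ and $\VP X$ are considered as subspaces of $\V X$, the map $f_*^\dagger$ is simply the restriction of $f^\dagger \colon \V X \to \V Y$ to $\V_* X$. As $f \colon X \to \V_* Y$ can be post-composed with the inclusion $\V_* Y \hookrightarrow \V Y$ to give a continuous map $X \to \V Y$, Proposition~\ref{folklore} yields continuity of $f^\dagger$, hence of its restriction $f_*^\dagger$ to $\V_* X$, provided this restriction actually lands in $\V_* Y$. So the real content is a well-definedness verification in each case.

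For the simple case, I would use Lemma~\ref{propertyofint}(i) directly: writing $\mu = \sum_{i=1}^m a_i \delta_{x_i}$ and $f(x_i) = \sum_{j=1}^{n_i} b_{ij} \delta_{y_{ij}}$, the defining integral collapses to
\[
  f_\simple^\dagger(\mu)(U) \;=\; \sum_{i=1}^m a_i f(x_i)(U) \;=\; \sum_{i,j} a_i b_{ij}\,\delta_{y_{ij}}(U),
\]
so $f_\simple^\dagger(\mu) = \sum_{i,j} a_i b_{ij}\,\delta_{y_{ij}}$ is again simple. This is a one-line computation once the formula for $f^\dagger$ from Proposition~\ref{folklore} is in hand.

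For the point-continuous case, the main work is to show: if $\mu \in \VP X$ and each $f(x) \in \VP Y$, then $f_\mathrm{p}^\dagger(\mu) \in \VP Y$. Fix an open $U \subseteq Y$ and $r < f_\mathrm{p}^\dagger(\mu)(U) = \int h\,d\mu$, where $h(x) = f(x)(U)$ is lower semi-continuous. Exactly as in the proof of Proposition~\ref{folklore}, I would approximate $h$ by step functions $h_N = \epsilon \sum_{k=1}^n \chi_{V_k}$ with $V_k = h^{-1}(k\epsilon,\infty]$, pick $N$ and weights $r_k > 0$ with $\epsilon \sum_k r_k \geq r$ and $\mu(V_k) > r_k$. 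Point-continuity of $\mu$ on $X$ provides, for each $k$, a finite $F_k \subseteq V_k$ such that $\mu(W) > r_k$ for every open $W \supseteq F_k$. Then point-continuity of each $f(x)$ for $x \in F_k$ yields a finite $G_{k,x} \subseteq U$ witnessing $f(x)(U') > k\epsilon$ for every open $U' \supseteq G_{k,x}$. I would let $F = \bigcup_{k,\,x \in F_k} G_{k,x}$, a finite subset of $U$; for any open $U' \supseteq F$, the set $V'_k := \{x : f(x)(U') > k\epsilon\}$ is open (by continuity of $x \mapsto f(x)(U')$) and contains $F_k$, hence $\mu(V'_k) > r_k$, giving $f_\mathrm{p}^\dagger(\mu)(U') \geq \epsilon \sum_k \mu(V'_k) > r$.

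The main obstacle is the double quantifier juggling in the point-continuous case: one has to combine the finite witnesses from $\mu$ (over $X$) with the finite witnesses from the $f(x)$'s (over $Y$) into a single finite set $F \subseteq U$ and then verify that an arbitrary open enlargement $U' \supseteq F$ still forces $f_\mathrm{p}^\dagger(\mu)(U') > r$. The key observation making this work is that $x \mapsto f(x)(U')$ remains lower semi-continuous (because $f \colon X \to \VP Y \subseteq \V_w Y$ is continuous and $[U' > t]$ is subbasic weak-open), so the enlarged superlevel sets $V'_k$ are open and contain the $F_k$, allowing point-continuity of $\mu$ to be reapplied uniformly in $U'$.
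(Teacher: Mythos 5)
Your proof is correct, and your reduction of continuity to Proposition~\ref{folklore} (restrict $f^\dagger$ after post-composing $f$ with the inclusion $\mathcal V_* Y \hookrightarrow \V Y$) together with the one-line computation in the simple case is exactly what the paper does. Where you genuinely diverge is the point-continuous case: the paper does not verify point-continuity of $f_{\mathrm p}^\dagger(\mu)$ by hand. Instead it observes that $f_{\simple}^\dagger$ maps $\VF X$ continuously into $\VP Y$, invokes Heckmann's theorem that $\VP X$ is the sobrification of $\VF X$ to obtain a unique continuous extension $e \colon \VP X \to \VP Y$, and then identifies $e$ with $f_{\mathrm p}^\dagger$ because both are continuous maps into the sober space $\V Y$ agreeing on $\VF X$; hence $f_{\mathrm p}^\dagger$ lands in $\VP Y$ because $e$ does. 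That route is shorter but leans on two nontrivial external facts (sobriety of $\V Y$ and the sobrification theorem \cite[Theorem~5.5]{heckmann96}). Your argument is self-contained and elementary: it combines the finite witness sets coming from point-continuity of $\mu$ over $X$ with those coming from point-continuity of the $f(x)$ over $Y$ into a single finite $F \subseteq U$, and the key step---that the enlarged superlevel sets $V_k'$ are open and still contain the $F_k$, so that the witnesses for $\mu$ can be reused uniformly in $U'$---is sound. (Two cosmetic points: you should also cover the Scott-open $(0,\infty]$, i.e.\ the case $r=0$, which your scheme handles by keeping only the indices $k$ with $\mu(V_k)>0$; and note that $\epsilon\sum_k \chi_{V_k'} \leq f(\cdot)(U')$ pointwise because the $V_k'$ are nested, which is what justifies the final inequality.)
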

\begin{proof}
  If $f_*^\dagger$ indeed takes its values in $\mathcal V_*Y$, then it
  is continuous, because $f^\dagger$ is---that is part of
  Proposition~\ref{folklore}.
  

  We proceed to prove that $f_*^\dagger$ takes its values in
  $\mathcal V_*Y$.  When $*=\mathrm{f}$, we assume that
  $\mu=\sum_{i=1}^n r_i\delta_{x_i}$. Then
  $f_{\simple}^\dagger(\mu)(U)=\sum_{i=1}^n r_if(x_i)(U)$. Since for
  each~$i\in I$, $r_if(x_i)$ is a simple valuation,
  $f_{\simple}^\dagger(\mu)$, as a finite sum of simple valuations,
  is again a simple valuation.

  We now show that $f_{\mathrm p}^\dagger$ takes its values in
  $\mathcal V_{\mathrm p} Y$.  In order to see this, we first notice
  that $f_{\simple}^\dagger$ is also a continuous map from $\VF X$
  to $\VP Y$, considering $\VF Y$ as a subspace of $\VP Y$. Since
  $\VP X$ is the sobrification of $\VF X$ \cite[Theorem
  5.5]{heckmann96}, the function $f_{\simple}^\dagger$ has a unique
  continuous extension $e$ from $\VP X$ to~$\VP Y$. Considering
  $\VP Y$ as a subspace of $\V Y$, then from
  Proposition~\ref{folklore} we know that both $e$ and
  $f_{\mathrm p}^\dagger$ are continuous functions from $\VP X$ to
  $\V Y$. Since $\V Y$ is $T_0$, and $e$ and $f_{\mathrm p}^\dagger$
  coincide on~$\VF X$, they coincide on the sobrification~$\VP X$ as
  well. Thus $f_{\mathrm p}^\dagger$ sends point-continuous valuations
  to point-continuous valuations since $e$ does.
\end{proof}

With all the ingredients listed above, we conclude the following:

\begin{proposition}
  $\mathcal V_*$ ($*$ is $\mathrm p$ or $\simple$) is a monad on the
  category $\ctop$, with the unit
  $\eta_* \colon x\to \delta_x\colon  X\to \mathcal V_*X$ and extension
  $$f_*^\dagger \colon \mu\mapsto (U\mapsto \int_{x\in X}f(x)(U)d\mu) \colon
  \mathcal V_*X \to \mathcal V_*Y$$ for continuous map
  $f\colon  X\to \mathcal V_* Y$. The multiplication $m^*_X$ of
  $\mathcal V_*$ at $X$ is $(\id_{\mathcal V_*X})_*^\dagger$. \qed
\end{proposition}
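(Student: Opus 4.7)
The plan is to reduce the verification to the fact that $\mathcal V_w$ is already a monad (Proposition~\ref{folklore}), by showing that $\mathcal V_*$ sits inside $\mathcal V_w$ as a submonad. Concretely, I will verify Manes' three equations for $(\mathcal V_*, \eta_*, {}_*^\dagger)$ by restricting the corresponding equations for $(\mathcal V_w, \eta, {}^\dagger)$ to the appropriate subspaces, once we know that the data are well-defined in $\mathcal V_*$.

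First, the unit is well-defined: for every $x \in X$, the Dirac mass $\delta_x$ is a simple valuation (trivially, as $1 \cdot \delta_x$), hence also a point-continuous valuation, so $\eta_{*X} \colon X \to \mathcal V_* X$ factors through $\mathcal V_* X \hookrightarrow \mathcal V_w X$ and is continuous because it is the composition of the continuous unit of $\mathcal V_w$ with this inclusion. Second, the extension is well-defined and continuous: this is exactly the content of the immediately preceding proposition, which already shows $f_*^\dagger$ takes values in $\mathcal V_* Y$ and is continuous. Together, this gives the structure $(\mathcal V_*, \eta_*, {}_*^\dagger)$, and for every $\mu \in \mathcal V_* X$ we have $f_*^\dagger (\mu) = f^\dagger (\mu)$, viewing $f$ as a continuous map $X \to \mathcal V_w Y$ via inclusion.

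It remains to verify the Manes equations. For (i), $\eta_{*X}{}_*^\dagger = \eta_X^\dagger \restriction \mathcal V_* X = \id_{\mathcal V_w X} \restriction \mathcal V_* X = \id_{\mathcal V_* X}$. For (ii), given $f \colon X \to \mathcal V_* Y$, composition with the inclusion into $\mathcal V_w Y$ gives $f_*^\dagger \circ \eta_{*X} = f^\dagger \circ \eta_X = f$ by the corresponding law for $\mathcal V_w$. For (iii), given $f \colon X \to \mathcal V_* Y$ and $g \colon Y \to \mathcal V_* Z$, both sides $g_*^\dagger \circ f_*^\dagger$ and $(g_*^\dagger \circ f)_*^\dagger$ coincide with the corresponding expressions for $\mathcal V_w$ when the outputs are viewed in $\mathcal V_w Z$, and equality there is the law (iii) for $\mathcal V_w$; equality in $\mathcal V_* Z$ follows since the inclusion $\mathcal V_* Z \hookrightarrow \mathcal V_w Z$ is injective. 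Finally, the stated formula for the multiplication $m^*_X = (\id_{\mathcal V_* X})_*^\dagger$ follows from the general recipe given after Manes' definition in the paper.

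The only place where real content is needed is that $f_*^\dagger$ preserves the subclass $\mathcal V_*$; this was precisely the main obstacle in the preceding proposition (especially the point-continuous case, which used that $\mathcal V_{\mathrm p} X$ is the sobrification of $\VF X$ to transport the simple case). Once that is in hand, the monad equations here are a formality, and no further delicate computations (notably, no reuse of the disintegration formula) are required.
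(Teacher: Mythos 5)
Your proposal is correct and matches the paper's intent exactly: the paper gives no written proof (the proposition is stated with an immediate \qed after ``With all the ingredients listed above''), precisely because the two preceding propositions establish that the unit and extension restrict to $\mathcal V_*$, after which the Manes equations are inherited from $\mathcal V_w$ by restriction along the (injective, continuous) inclusions, as you spell out. Your writeup is a faithful expansion of that implicit submonad argument, with the key content correctly located in the preservation of $\mathcal V_*$ by $f_*^\dagger$.
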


Similarly to $\V X$, $\VF X$ and $\VP X$ are also locally linear
topological cones with the canonical operations of addition and scalar
multiplication. Moreover, we have the following:

\begin{theorem}{\rm \cite[Theorem 6.7, Theorem 6.8]{heckmann96}}\label{twomains}
  \begin{enumerate}
  \item $\VF X$ is the free weakly locally convex cone over~$X$ in the
    category $\ctop$.
  \item $\VP X$ is the free weakly locally convex sober cone over~$X$
    in the category $\ctop$.
  \end{enumerate}
\end{theorem}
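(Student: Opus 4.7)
The plan is to prove both parts by constructing, for each continuous map $f : X \to C$ into a weakly locally convex cone (sober in part (ii)), a unique continuous linear extension $\hat f : \VF X \to C$ (resp.\ $\tilde f : \VP X \to C$) along the unit $\eta_X : x \mapsto \delta_x$. Before the universal property, one should check that $\VF X$ and $\VP X$ are themselves weakly locally convex cones (and $\VP X$ is sober): both are subcones of the locally linear, sober $\V X$, and weak local convexity restricts to subspaces since the intersection of a convex neighborhood with a subcone is again a convex neighborhood; sobriety of $\VP X$ is Heckmann's Theorem 5.5.

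For (i), linearity combined with $\hat f \circ \eta_X = f$ forces $\hat f(\sum_{i=1}^n r_i \delta_{x_i}) = \sum_{i=1}^n r_i f(x_i)$, so uniqueness is automatic, and well-definedness is a routine check on the algebraic relations satisfied by simple valuations as a free cone modulo the cone axioms. The substantive point is continuity of $\hat f$ at $\mu_0 = \sum_{i=1}^n r_i \delta_{x_i}$. Given an open $U \ni \hat f(\mu_0)$ in $C$, pick a convex neighborhood $V \subseteq U$ of $\hat f(\mu_0)$ using weak local convexity. Joint continuity of $+$ and $\cdot$ in $C$ together with continuity of $f$ yields $\varepsilon > 0$ and open neighborhoods $W_i \ni x_i$ such that $\sum_{i=1}^n s_i f(y_i) \in V$ whenever $|s_i - r_i| < \varepsilon$ and $y_i \in W_i$. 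The weak-topology open $O = \bigcap_{i=1}^n [W_i > r_i - \varepsilon]$ is then a neighborhood of $\mu_0$ in $\VF X$; for any simple valuation in $O$ one rewrites it, after refining the $W_i$ to a disjointifying cover, as a combination of terms of the permitted form, and invokes convexity of $V$ to conclude that $\hat f$ lands inside $V \subseteq U$.

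For (ii), assume further that $C$ is sober, and apply (i) to obtain a continuous linear $\hat f : \VF X \to C$. Since $\VP X$ is the sobrification of $\VF X$, the universal property of sobrification yields a unique continuous extension $\tilde f : \VP X \to C$. To see that $\tilde f$ is linear, fix $\nu \in \VF X$: both $\mu \mapsto \tilde f(\mu + \nu)$ and $\mu \mapsto \tilde f(\mu) + \hat f(\nu)$ are continuous maps $\VP X \to C$ agreeing on $\VF X$, hence they coincide on all of $\VP X$ by the uniqueness part of the sobrification property. Fixing $\mu \in \VP X$ and rerunning the same argument in the second variable gives full additivity of $\tilde f$, and scalar homogeneity follows analogously using the continuity of $\cdot$ on $\VP X$ and $C$.

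The main obstacle is the continuity step in (i). All the difficulty is concentrated in showing that weak local convexity, and nothing stronger, is exactly the hypothesis needed to extend a continuous map defined on the Dirac masses $\delta_x$ to a continuous linear map on the whole cone of simple valuations; convex neighborhoods provide the coarse bucket needed to absorb the combinatorial mismatch between a nearby simple valuation (which may involve a different number of Dirac masses, with slightly perturbed coefficients and support) and the fixed decomposition of $\mu_0$. Once continuity in (i) is secured, part (ii) reduces cleanly to the universal property of sobrification together with the two-variable density argument above.
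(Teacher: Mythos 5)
First, note that the paper does not prove this theorem at all: it is imported verbatim from Heckmann \cite[Theorems 6.7 and 6.8]{heckmann96}, so there is no in-paper proof to compare against. Judged on its own, your reduction of (ii) to (i) via the sobrification property of $\VP X$ is sound (you correctly invoke the uniqueness clause of the sobrification adjunction rather than mere density, and the two-variable argument for linearity of $\tilde f$ works because addition on $\VP X$ and on $C$ is continuous). The problem is in (i), precisely at the step you yourself identify as the crux.

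The neighbourhood $O = \bigcap_{i=1}^n [W_i > r_i - \varepsilon]$ of $\mu_0=\sum_{i=1}^n r_i\delta_{x_i}$ is too large: it only controls $\nu(W_i)$ for each $i$ separately, and when the $W_i$ overlap it does not force $\nu$ to carry enough \emph{total} mass to be split into pieces of mass close to $r_i$ sitting in the respective $W_i$. Concretely, let $X=\{a,b,c\}$ with opens $\emptyset,\{c\},\{a,c\},\{b,c\},X$ (so $c$ lies above both $a$ and $b$ in the specialization order), let $C=\exr$ and $f\equiv 1$, so that $\hat f(\nu)=\nu(X)$. For $\mu_0=\delta_a+\delta_b$ every choice of open $W_1\ni a$, $W_2\ni b$ contains $c$, and $\nu=\delta_c$ lies in $[W_1>1-\varepsilon]\cap[W_2>1-\varepsilon]$ for every $\varepsilon>0$, yet $\hat f(\nu)=1$ while $\hat f(\mu_0)=2$; so $O\not\subseteq \hat f^{-1}((1.9,\infty])$. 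Your proposed repair, ``refining the $W_i$ to a disjointifying cover,'' is not available: in a $T_0$ (non-$T_1$) space distinct points need not have disjoint, or even mutually excluding, open neighbourhoods, as the example shows. The correct basic neighbourhood is $\bigcap_{\emptyset\neq A\subseteq\{1,\dots,n\}}\bigl[\bigcup_{i\in A}W_i > \sum_{i\in A}r_i-\varepsilon\bigr]$, i.e.\ one must control all unions; one then needs a genuine splitting lemma (a Hall's-theorem/max-flow argument, as in Heckmann and in Jones's Splitting Lemma) to decompose any simple $\nu$ in this set as $\nu\geq\sum_i\nu_i$ with $\nu_i$ supported in $W_i$ and of mass within $\varepsilon$ of $r_i$, after which your convexity-of-$V$ argument (together with the fact that opens are upward closed and $\hat f$ is monotone, since $x\leq x+y$ in any topological cone) does close the proof. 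This combinatorial lemma is the real content of Heckmann's theorem and is missing from your proposal. A second, smaller gap: well-definedness of $\hat f$ requires the uniqueness of the representation of a simple valuation as $\sum_i r_i\delta_{x_i}$ with distinct $x_i$ and $r_i>0$ on a $T_0$ space; this is true but deserves more than the word ``routine.''
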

This means that for a $T_0$ space~$X$, $\VF X$ (resp., $\VP X$) is a
weakly locally convex (resp., weakly locally convex sober) topological
cone, and for every continuous function $f\colon  X\to C$ from $X$ to a
weakly locally convex (resp., weakly locally convex sober) topological
cone~$C$, there is a unique continuous linear function
$\overline{f}\colon  \mathcal V_* X\to M$ such that
$\overline{f}\circ \eta_*=f$, where $*$ is $\simple$ or $\mathrm p$.

The following is a straightforward consequence of the above theorem.

\begin{corollary}
  A topological cone $C$ is weakly locally convex (resp., weakly
  locally convex and sober) if and only if it is a continuous linear
  retract of a locally linear (resp., locally linear and sober)
  topological cone.
\end{corollary}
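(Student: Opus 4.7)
The corollary has two implications, and both follow fairly directly from results already established in the excerpt. My plan is as follows.

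For the ``if'' direction, suppose $C$ is a continuous linear retract of a locally linear topological cone $D$, via a linear retraction $r \colon D \to C$ (with section $s \colon C \to D$, which need not be linear). Then Proposition~\ref{suls} immediately gives that $C$ is a weakly locally convex topological cone. If, in addition, $D$ is sober, then since sobriety is preserved by continuous retractions (as already used in the proof of Proposition~\ref{prop:V:alg:bary}), $C$ is a weakly locally convex sober topological cone.

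For the ``only if'' direction, suppose $C$ is a weakly locally convex topological cone. The identity map $\id_C \colon C \to C$ is a continuous map from $C$ into a weakly locally convex topological cone. By the universal property in Theorem~\ref{twomains}(i), there is a unique continuous linear map $\overline{\id_C} \colon \VF C \to C$ such that $\overline{\id_C} \circ \eta_{\simple} = \id_C$. This exhibits $\overline{\id_C}$ as a continuous linear retraction with section $\eta_{\simple} \colon x \mapsto \delta_x$. Since $\VF C$ is a locally linear topological cone (noted in the paragraph just before Theorem~\ref{twomains}, and inherited from the fact that $\V C$ is locally linear with subbasic half-spaces $[U > r]$), $C$ is a continuous linear retract of a locally linear topological cone. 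If $C$ is additionally sober, we repeat the argument using Theorem~\ref{twomains}(ii) in place of (i): we obtain a continuous linear retraction $\overline{\id_C} \colon \VP C \to C$ with section $\eta_{\mathrm p}$, and $\VP C$ is locally linear and sober (it is sober because it is the sobrification of $\VF C$ by Heckmann's result cited at the beginning of the section).

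The main point to watch in writing this up is just to be clear that the universal property supplies a \emph{linear} map whose composition with the unit is the identity, so that the retraction is automatically linear while the section $\eta_*$ is merely continuous — this is exactly what the definition of linear retraction in Section~\ref{cones} requires, and no further obstacle arises.
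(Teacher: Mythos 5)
Your proposal is correct and follows essentially the same route as the paper: the ``if'' direction is Proposition~\ref{suls} (plus preservation of sobriety under continuous retractions), and the ``only if'' direction applies the universal property of Theorem~\ref{twomains} to $\id_C$ to obtain the continuous linear retraction $\overline{\id_C}$ of $\VF C$ (resp.\ $\VP C$) onto $C$, with section $\eta_*$. Your added remarks on why $\VP C$ is sober and on the asymmetry between the linear retraction and the merely continuous section only make explicit what the paper leaves implicit.
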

\begin{proof}
  We have seen the ``if'' direction in~Proposition~\ref{suls}. 

  For the ``only if'' direction, since $\id_C$, the identity map
  over~$C$, is continuous and linear, there exists a unique continuous
  linear map $\overline{\id_C}\colon  \VF C\to C$ such that
  $\overline{\id_C}\circ\eta_{\simple}=\id_C$. This exhibits that
  $C$ is a linear retract of~$\VF C$ which is a locally linear
  topological cone.

  To show that every weakly locally convex sober topological cone is a
  linear retract of some locally linear sober topological cone, we
  just change $\VF$ into $\VP$ in the above and the same argument
  applies.
\end{proof}

The following results from~\cite[Theorem 6.1]{heckmann96} are needed
for our further discussion.
\begin{theorem}\label{uniqueondm}
  Let $X$ be a topological space. 
  \begin{enumerate}
  \item Every linear function from $\VF X$ to some cone is uniquely
    determined by its values on Dirac masses.
  \item Every continuous linear function from $\VP X$ to some
    topological cone is uniquely determined by its values on Dirac
    masses.
  \end{enumerate}
\end{theorem}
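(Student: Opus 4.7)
The plan for (i) is essentially immediate from linearity: every simple valuation is by definition a finite sum $\sum_{i=1}^n r_i \delta_{x_i}$ of Dirac masses, hence any linear map $L$ on $\VF X$ satisfies $L(\sum_{i=1}^n r_i \delta_{x_i}) = \sum_{i=1}^n r_i L(\delta_{x_i})$ and is pinned down by the values $L(\delta_x)$ for $x \in X$. No continuity or topology is needed here.

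For (ii), let $L_1, L_2 \colon \VP X \to C$ be continuous linear maps into a topological cone $C$ that agree on all Dirac masses. Applying the argument of part (i) to their restrictions to the subspace $\VF X \subseteq \VP X$, the two maps already coincide on $\VF X$. The plan is then to propagate this equality to all of $\VP X$ using Heckmann's observation, cited as \cite[Theorem~5.5]{heckmann96}, that $\VP X$ is the sobrification of $\VF X$.

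The mild subtlety is that $C$ is only $T_0$, not necessarily sober, so the universal property of sobrification does not apply directly to maps valued in $C$. The fix I have in mind is to factor through the sobrification of the codomain: let $i \colon C \hookrightarrow C^s$ be the canonical continuous injection of the $T_0$ space $C$ into its sobrification $C^s$. Then $i \circ L_1$ and $i \circ L_2$ are two continuous maps from $\VP X$ into the sober space $C^s$ that coincide on $\VF X$. By the universal property of the sobrification $\VP X = (\VF X)^s$, any continuous map from $\VF X$ into a sober target has a unique continuous extension to $\VP X$; hence $i \circ L_1 = i \circ L_2$, and the injectivity of $i$ (which holds because $C$ is $T_0$) then forces $L_1 = L_2$. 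The main point to verify, and the only part I would expect any friction from, is this last detour: one needs only the standard facts that the sobrification inclusion of a $T_0$ space is an injective continuous map, and that continuous extensions to sober targets are unique.
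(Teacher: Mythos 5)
Your proposal is correct. Note that the paper does not actually prove this statement: it is quoted verbatim from Heckmann \cite[Theorem~6.1]{heckmann96}, so there is no in-paper proof to match against. Part~(i) is indeed immediate from linearity exactly as you say. For part~(ii), your detour through the sobrification $i \colon C \hookrightarrow C^s$ of the codomain is sound: $i$ is injective because $C$ is $T_0$ (and every topological cone is $T_0$ by the paper's definition), and the universal property of $\VP X = (\VF X)^s$ forces $i \circc L_1 = i \circ L_2$ --- wait, to be precise, forces $i \circ L_1 = i \circ L_2$, whence $L_1 = L_2$. It is worth pointing out that the paper itself uses the same density-in-the-sobrification argument elsewhere (in showing that $f_{\mathrm p}^\dagger$ maps $\VP X$ into $\VP Y$), but phrased directly: two continuous maps from $\VP X$ into a $T_0$ space that agree on $\VF X$ already agree on all of $\VP X$, because the inclusion $\VF X \hookrightarrow \VP X$ induces an isomorphism of open-set lattices, so the two maps have identical preimages of every open set, and $T_0$-ness of the target finishes the job. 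That direct version saves you the factorization through $C^s$; your version saves you from having to state (and believe) that lemma, at the cost of invoking the universal property plus injectivity of the sobrification unit. Either route is fine, and you are right that plain topological density in a merely $T_0$ target would \emph{not} suffice --- the sobrification structure is genuinely needed.

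(One small correction to the displayed reasoning above: the conclusion is $i \circ L_1 = i \circ L_2$, hence $L_1 = L_2$; the stray ``$\circc$'' should read $\circ$.)
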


We now have enough ingredients to prove the main results of this
section.

\begin{theorem}\label{theoremvf}
  Let $X$ be a $T_0$ topological space, and $\alpha \colon \VF X\to X$
  be a continuous map. If $(X, \alpha)$ is a $\VF$-algebra, then $X$
  is a weakly locally convex topological cone, and $\alpha$ is the
  \emph{standard barycentre} map
  $\sum_{i=1}^n r_i \delta_{x_i} \mapsto \sum_{i=1}^n r_i
  x_i$. Conversely, for every weakly locally convex topological
  cone~$C$, there exists a (unique) continuous linear map~$\alpha$
  from $\VF C$ to $C$, sending each simple valuation to its
  barycentre, and the pair $(C, \alpha)$ is a $\VF$-algebra.
\end{theorem}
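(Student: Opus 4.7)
The proof splits into a forward implication and a converse, and both can be carried out using machinery already in place. For the forward implication, I would mirror the proof of Lemma~\ref{struisbary} essentially verbatim, replacing $\V$ by $\VF$ throughout. Set $x+y := \alpha(\delta_x+\delta_y)$ and $r\cdot x := \alpha(r\delta_x)$. The key observation---which avoids any appeal to the Schr\"oder--Simpson theorem---is that every extension map $f_{\simple}^\dagger$ is manifestly linear from the finitary formula $f_{\simple}^\dagger(\sum_i r_i\delta_{x_i})(U)=\sum_i r_i f(x_i)(U)$, so in particular $m^{\simple}_X$ and $\VF\alpha$ are linear. The same algebraic calculations as in Lemma~\ref{struisbary} then show that $X$ is a topological cone and that $\alpha$ is linear; combined with $\alpha(\delta_x)=x$ (from the unit law), this forces $\alpha(\sum_{i=1}^n r_i\delta_{x_i})=\sum_{i=1}^n r_i x_i$, so $\alpha$ is the standard barycentre map. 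Finally, $\alpha$ and $\eta_{\simple}$ exhibit $X$ as a linear retract of $\VF X$, which is weakly locally convex by Theorem~\ref{twomains}(i); hence so is $X$, as in Proposition~\ref{suls}.

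For the converse direction, I would take a weakly locally convex topological cone $C$ and apply Theorem~\ref{twomains}(i) to the continuous linear identity $\id_C\colon C\to C$. This produces a unique continuous linear map $\alpha\colon \VF C\to C$ with $\alpha\circ\eta_{\simple}=\id_C$. Linearity then forces $\alpha(\sum_{i=1}^n r_i\delta_{x_i})=\sum_{i=1}^n r_i x_i$, the standard barycentre. The unit equation $\alpha\circ\eta_{\simple}=\id_C$ of the algebra is immediate. For the associativity equation $\alpha\circ m^{\simple}_C=\alpha\circ \VF\alpha$, observe that both sides are linear maps $\VF\VF C\to C$, so by Theorem~\ref{uniqueondm}(i) it suffices to check agreement on Dirac masses $\delta_\mu$ for $\mu\in\VF C$: by the monad unit law, $\alpha(m^{\simple}_C(\delta_\mu))=\alpha(\mu)$, while by Remark~\ref{rem:VF:f} and $\alpha\circ\eta_{\simple}=\id_C$, $\alpha(\VF\alpha(\delta_\mu))=\alpha(\delta_{\alpha(\mu)})=\alpha(\mu)$.

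I do not foresee any genuinely hard step. The main conceptual point is that the treatment of $\VF$ is much lighter than for $\V$: extension maps on $\VF$ are linear by inspection of the finitary formula, and Theorem~\ref{uniqueondm}(i) reduces equality of linear maps on $\VF X$ to equality on Dirac masses, so neither Schr\"oder--Simpson nor any convex-$T_0$ hypothesis is needed. The only subtlety worth tracking carefully is the parallel roles played by $\eta_{\simple}$, $m^{\simple}$, and $\VF\alpha$ in the algebraic manipulations of the forward direction, which is a direct transcription of the argument already carried out for the $\V$-case.
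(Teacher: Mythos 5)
Your proposal is correct and follows essentially the same route as the paper: the forward direction transcribes the argument of Lemma~\ref{struisbary} (with linearity of $f_{\simple}^\dagger$, $m^{\simple}_X$ and $\VF\alpha$ read off from the finitary formula) and deduces weak local convexity from $X$ being a linear retract of $\VF X$, while the converse uses freeness (Theorem~\ref{twomains}(i)) and reduces the associativity law to Dirac masses via Theorem~\ref{uniqueondm}(i), with exactly the same three-line computation. No gaps.
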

\begin{proof}
  A similar argument as in the proof of Lemma~\ref{struisbary} shows
  that $X$ is a topological cone with $+$ defined by
  $x+y = \alpha(\delta_x+\delta_y)$, and scalar multiplication
  defined by $r\cdot x = \alpha(r\delta_x)$ for all $r\in \mathbb R_+$
  and $x, y\in X$. $X$ is weakly locally convex since $\VF X$ is
  locally linear and the structure map $\alpha$ is a linear
  retraction.  We show that $\alpha$ sends each simple valuation to
  its barycentre. This is easy since $\alpha(\delta_x)= x$ by the
  definition of structure map, hence by linearity $\alpha$ sends every
  simple valuation $\sum_{i=1}^n r_i\delta_{x_i}$ to
  $\sum_{i=1}^n r_ix_i$, which is a barycentre of
  $\sum_{i=1}^n r_i\delta_{x_i}$ (Example~\ref{examplebary}).

  Conversely, assume that $C$ is a weakly locally convex topological
  cone. Since $\VF C$ is the free weakly locally convex topological
  cone over~$C$, there exists a unique map $\alpha$ such that
  $\alpha \circ \eta_{\simple}=\id_C$. Hence $\alpha (\delta_x)= x$
  for every $x\in C$. Then $\alpha$ sends each simple valuation to its
  barycentre, since $\alpha$ is linear. Finally, to see that
  $(C, \alpha)$ is a $\VF$-algebra, we only need to verify that
  $\alpha\circ m^{\simple}_C = \alpha\circ \VF \alpha_C$. Notice
  that both sides of the equals sign are continuous linear functions
  from $\VF \VF C$ to $C$. From Theorem~\ref{uniqueondm} we only need
  to show they are equal on Dirac masses. To this end, let us assume
  that $\mu$ is a simple valuation on $X$, and we compute the
  following,
  \begin{align*}
    \alpha \circ m^{\simple}_C (\delta_\mu )
    &= \alpha (\mu)
    & \text{monad law}
    \\
    &= \alpha(\delta_{\alpha(\mu)})
    & \alpha \circ \eta = \id 
    \\
    &= \alpha\circ \VF \alpha(\delta_{\mu})
    & \text{naturality of the unit,}      
  \end{align*}
  and this concludes the proof.
\end{proof}

The $\VF$-algebra morphisms are precisely the continuous linear maps between them.

\begin{theorem}\label{morphismvf}
  Let $X, Y$ be two weakly locally convex topological cones, and
  $\alpha\colon \VF X\to X, \beta\colon\VF Y\to Y$ be the
  corresponding barycentre maps. Then a continuous map $f\colon  X\to Y$ is
  a $\VF$-algebra morphism if and only if $f$ is linear.
\end{theorem}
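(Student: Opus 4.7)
The plan is to prove both implications by reducing everything to the behavior of the maps on Dirac masses, exploiting the fact that $\alpha$ and $\beta$ are linear retractions of the respective units, together with Theorem~\ref{uniqueondm}(i).

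For the forward direction, assume $f \colon X \to Y$ is a $\VF$-algebra morphism, so $f \circ \alpha = \beta \circ \VF f$. I would mimic the computation already carried out in the excerpt for $\V$-algebra morphisms: for $a, b \in X$, use $a + b = \alpha(\delta_a + \delta_b)$, apply the morphism equation, then use the formula $\VF f (\delta_a + \delta_b) = \delta_{f(a)} + \delta_{f(b)}$ from Remark~\ref{rem:VF:f}, and finally use the definition of addition on $Y$ to obtain $f(a) + f(b)$. The analogous chain with $r \delta_a$ in place of $\delta_a + \delta_b$ gives $f(r \cdot a) = r \cdot f(a)$.

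For the backward direction, assume $f$ is continuous and linear. The goal is to show that $f \circ \alpha = \beta \circ \VF f$ as maps $\VF X \to Y$. Both sides are linear: $\alpha$ is linear (it is the standard barycentre map by Theorem~\ref{theoremvf}), $f$ is linear by hypothesis, $\VF f$ is linear by the explicit formula in Remark~\ref{rem:VF:f}, and $\beta$ is linear for the same reason as $\alpha$. By Theorem~\ref{uniqueondm}(i), it therefore suffices to check agreement on Dirac masses. For $x \in X$, we have $f(\alpha(\delta_x)) = f(x)$ because $\alpha \circ \eta_{\simple} = \id_X$, and $\beta(\VF f(\delta_x)) = \beta(\delta_{f(x)}) = f(x)$ because $\beta \circ \eta_{\simple} = \id_Y$. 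Hence the two linear maps coincide everywhere on $\VF X$, and $f$ is a $\VF$-algebra morphism.

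There is no real obstacle: the content of the theorem is already packaged in Theorem~\ref{uniqueondm}(i) and Theorem~\ref{theoremvf}, which together reduce the verification of the morphism equation to a one-line check on Dirac masses. The mildest subtlety is that Theorem~\ref{uniqueondm}(i) requires only linearity, not continuity, so continuity of $f$ is used only to ensure that $f$ is a morphism in $\ctop$ in the first place (and for $f \circ \alpha$ and $\beta \circ \VF f$ to be legitimate $\ctop$-morphisms).
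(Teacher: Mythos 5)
Your proof is correct and follows essentially the same route as the paper: the forward direction is the identical Dirac-mass computation, and your backward direction — reducing to Dirac masses via linearity of both composites and Theorem~\ref{uniqueondm}(i) — is just a repackaging of the paper's direct evaluation of $f\circ\alpha$ and $\beta\circ\VF f$ on a general simple valuation $\sum_{i=1}^n r_i\delta_{x_i}$. Your closing remark that Theorem~\ref{uniqueondm}(i) needs only linearity, not continuity, is also accurate.
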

\begin{proof}
  Assume first that $f$ a $\VF$-algebra morphism. For any $a, b\in X$,
  $r\in \mathbb R_+$, we have
  \begin{align*}
    f(a+b)
    &= f(\alpha(\delta_a+\delta_b))
    & \alpha \text{ is the barycentre map}
    \\
    &= \beta(\VF f(\delta_a+\delta_b))
    & f \text{ is a morphism of algebras}
    \\
    &= \beta(\delta_f(a)+\delta_f(b))
    & \text{Remark~\ref{rem:VF:f}}
    \\
    &= f(a)+f(b)
    & \alpha \text{ is the barycentre map}
  \end{align*}
  Similarly, we can prove that $f(ra)=rf(a)$.

  Conversely, assume $f$ is linear. Then for any simple valuation
  $\sum_{i=1}^n r_i\delta_{x_i}$, we have:
  \begin{align*}
    f(\alpha(\sum_{i=1}^n r_i\delta_{x_i}))
    &=  f(\sum_{i=1}^n r_ix_i)
    & \alpha \text{ is the barycentre map}
    \\
    & = \sum_{i=1}^n r_if(x_i)
    & f \text{ is linear}, 
  \end{align*}
  and 
  \begin{align*}
    \beta(\VF f(\sum_{i\in I}r_i\delta_{x_i}))
    & = \beta(\sum_{i\in I}r_i\delta_{f(x_i)})
    & \text{Remark~\ref{rem:VF:f}}
    \\
    & =\sum_{i\in I} r_if(x_i)
    & \beta \text{ is the barycentre map.}      
  \end{align*}
  Hence $f\circ \alpha = \beta\circ \VF f$, and therefore $f$ is a
  $\VF$-algebra morphism.
\end{proof}

In weakly locally convex topological cones, even simple valuations may
have several barycentres.  As in Theorem~\ref{theoremvf}, we call $x$
the \emph{standard barycentre} of $\delta_x$.  Note that this is
well-defined: if $\delta_x = \delta_y$, then for every open set $U$,
$\delta_x (U)=1$ if and only if $\delta_x (U)=1$, hence $x$ and $y$
have the same open neighbourhoods, which implies $x=y$ since the cone
is $T_0$.
\begin{theorem}
  Let $X$ be a $T_0$ topological space, and $\alpha \colon \VP X\to X$
  be a function. If $(X, \alpha)$ is a $\VP$-algebra, then $X$ is a
  weakly locally convex sober topological cone, and $\alpha$ is a
  linear continuous function that maps every point-continuous
  valuation to one of its (Choquet) barycentres, and every Dirac mass
  to its standard barycentre.

  Conversely, for every weakly locally convex sober topological
  cone~$C$, there exists a (unique) continuous linear map~$\alpha$
  from $\VP C$ to~$C$ that sends each Dirac mass to its standard 
  barycentre.  Then the pair $(C, \alpha)$ is a $\VP$-algebra.
\end{theorem}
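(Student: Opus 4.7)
The plan is to mirror the proof of Theorem~\ref{theoremvf} for $\VF$, using Theorem~\ref{uniqueondm}(ii) and the freeness statement of Theorem~\ref{twomains}(ii) as the $\VP$-analogues of the corresponding $\VF$-facts.

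For the ``if'' direction, I would first recycle the Lemma~\ref{struisbary} calculation verbatim (only the indices on $\eta$ and $m$ change): setting $x+y := \alpha(\delta_x+\delta_y)$ and $r \cdot x := \alpha(r\delta_x)$, the monad laws combined with the linearity of the extension operation and the naturality of the unit force $X$ to be a topological cone and $\alpha$ to be linear; joint continuity of $+$ and $\cdot$ on $X$ transfers from $\VP X$ through $\alpha$ and $\eta_{\mathrm p}$. The space $\VP X$ is locally linear, because the half-spaces $[U > r] \cap \VP X$ remain open half-spaces of the subcone $\VP X$ and subbase its subspace topology; and $\VP X$ is sober by \cite[Theorem~5.5]{heckmann96}. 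The algebra equation $\alpha \circ \eta_{\mathrm p} = \id_X$ together with the linearity of $\alpha$ exhibit $X$ as a linear retract of the locally linear sober cone $\VP X$, so $X$ is weakly locally convex by Proposition~\ref{suls} and sober (sobriety being inherited by retracts). For the barycentre property, given any lower semi-continuous linear $\Lambda \colon X \to \exr$, the two continuous linear maps $\Lambda \circ \alpha$ and $\nu \mapsto \int \Lambda~d\nu$ from $\VP X$ to $\exr$ agree on every Dirac mass $\delta_x$ (both equal $\Lambda(x)$, using $\alpha(\delta_x)=x$), hence coincide on all of $\VP X$ by Theorem~\ref{uniqueondm}(ii); this is exactly $\Lambda(\alpha(\mu)) = \int \Lambda~d\mu$. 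In particular, $\alpha(\delta_x)=x$ is the standard barycentre of $\delta_x$.

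For the converse, Theorem~\ref{twomains}(ii) applied to $\id_C \colon C \to C$ yields a unique continuous linear $\alpha \colon \VP C \to C$ extending $\id_C$ along $\eta_{\mathrm p}$, so $\alpha \circ \eta_{\mathrm p} = \id_C$ and $\alpha(\delta_x)=x$ for every $x \in C$. The remaining algebra equation $\alpha \circ m^{\mathrm p}_C = \alpha \circ \VP\alpha$ is verified as at the end of the proof of Theorem~\ref{theoremvf}: both sides are continuous linear maps $\VP\VP C \to C$, and on a Dirac mass $\delta_\mu$ each reduces to $\alpha(\mu)$, so they coincide by Theorem~\ref{uniqueondm}(ii). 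The main obstacle I anticipate is simply being precise about the local linearity of $\VP X$ as a subcone of the locally linear cone $\V X$, and the applicability of Theorem~\ref{uniqueondm}(ii) to the two specific comparisons above; beyond that, the proof runs parallel to the $\VF$ template of Theorem~\ref{theoremvf}, once Heckmann's sobrification result and the freeness of $\VP$ are in hand.
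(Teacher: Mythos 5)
Your proposal is correct and follows essentially the same route as the paper: the induced cone structure and linearity of $\alpha$ via the Lemma~\ref{struisbary} computation, weak local convexity and sobriety via the linear-retract argument from the locally linear sober cone $\VP X$, the barycentre property by comparing $\Lambda\circ\alpha$ with $\nu\mapsto\int\Lambda\,d\nu$ on Dirac masses via Theorem~\ref{uniqueondm}(ii), and the converse via the freeness of $\VP C$ (Theorem~\ref{twomains}(ii)) together with the Dirac-mass comparison for the multiplication law. No gaps; the points you flag as needing care (local linearity of the subcone $\VP X$, applicability of Theorem~\ref{uniqueondm}(ii)) are handled exactly as the paper does.
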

\begin{proof}
  The same reasoning as in the proof of~Theorem~\ref{theoremvf} will
  show that $X$ is a continuous linear retract of~$\VP X$. Since
  $\VP X$ is locally linear and sober, then $X$, as a continuous
  linear retract, is weakly locally convex sober.  The proof that
  $\alpha$ is linear is as in Lemma~\ref{struisbary}.

  To see that $\alpha$ maps every point-continuous valuation $\mu$ to
  one of its barycentres, for any continuous linear map
  $\Lambda \colon X\to \exr$ we consider two maps from $\VP X$ to $\exr$.
  They are $\mu\mapsto \Lambda\circ \alpha(\mu)$ and
  $\mu\mapsto \int \Lambda ~d\mu$. Note that these two maps are
  continuous linear maps and coincide on Dirac masses on~$X$, hence
  they are equal from Item~(ii) of Theorem~\ref{uniqueondm}.  Hence
  $\alpha$ is indeed a barycentre map.

  Conversely, for any weakly locally convex sober cone~$C$, let $f$ be
  the identity map from $C$ to $C$.  By Item~(ii) of
  Theorem~\ref{twomains}, there is a unique continuous linear map
  $\overline{f}$ such that $\overline{f} \circ \eta_{\mathrm p} = f$,
  and this is the desired $\alpha$.  That $(C, \alpha)$ is a
  $\VP$-algebra can be verified similarly as in the proof
  of~Theorem~\ref{theoremvf}.
\end{proof}

\begin{theorem}
  Let $X, Y$ be two weakly locally convex sober topological cones, and
  $\alpha\colon \VP X\to X, \beta\colon\VP Y\to Y$ be the structure
  maps of the corresponding $\VP$-algebras. Then a continuous map
  $f \colon X\to Y$ is a $\VP$-algebra morphism if and only if $f$ is
  linear.
\end{theorem}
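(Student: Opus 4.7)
The plan is to follow the same template as the proof of Theorem~\ref{morphismvf}, since the statement is the exact $\VP$-analogue of the $\VF$ case. For the forward direction, I would assume $f \circ \alpha = \beta \circ \VP f$ and verify linearity of $f$ directly. Given $a, b \in X$ and $r \in \Rplus$, I would compute $f(a+b) = f(\alpha(\delta_a+\delta_b)) = \beta(\VP f(\delta_a+\delta_b)) = \beta(\delta_{f(a)}+\delta_{f(b)}) = f(a)+f(b)$, using in turn: that $\alpha$ sends a Dirac mass to its standard barycentre (previous theorem), the algebra morphism equation, the explicit formula $\VP f(\delta_x) = \delta_{f(x)}$ coming directly from $\VP f(\mu)(U) = \mu(f^{-1}(U))$, and finally the fact that $\beta$ too sends Dirac masses to standard barycentres. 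The scalar multiplication case is identical with $r\delta_a$ in place of $\delta_a+\delta_b$.

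For the converse, the key idea is to appeal to Theorem~\ref{uniqueondm}(ii): a continuous linear map out of $\VP X$ into any topological cone is uniquely determined by its values on Dirac masses. So I would consider the two maps $f \circ \alpha$ and $\beta \circ \VP f$ from $\VP X$ to $Y$, and show that both are continuous and linear, and that they coincide on Dirac masses $\delta_x$. Continuity is clear since $\alpha$, $\beta$, $f$, and $\VP f$ are continuous. Linearity of $\alpha$ and $\beta$ was established in the previous theorem; linearity of $\VP f$ follows from the pointwise formula $\VP f(\mu)(U) = \mu(f^{-1}(U))$; linearity of $f$ is the hypothesis. On a Dirac mass, $f(\alpha(\delta_x)) = f(x)$ (standard barycentre) and $\beta(\VP f(\delta_x)) = \beta(\delta_{f(x)}) = f(x)$ for the same reason. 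Theorem~\ref{uniqueondm}(ii) then forces $f \circ \alpha = \beta \circ \VP f$ everywhere on $\VP X$, which is exactly the algebra morphism condition.

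I do not anticipate any serious obstacle here: the argument is purely formal, and the only mildly delicate point is to remember that one must justify not just continuity but also linearity of both $f\circ\alpha$ and $\beta\circ\VP f$ before invoking Theorem~\ref{uniqueondm}(ii), since that uniqueness statement requires linearity. Sobriety of the codomain $Y$ plays no role in this particular argument; it enters only through the fact that the structure maps $\alpha$ and $\beta$ exist and have the announced properties, which was the content of the preceding theorem.
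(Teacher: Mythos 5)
Your proposal is correct and follows essentially the same route as the paper: the forward direction repeats the Dirac-mass computation from the $\VF$ case, and the converse checks that $f\circ\alpha$ and $\beta\circ\VP f$ are continuous linear maps agreeing on Dirac masses and then invokes Theorem~\ref{uniqueondm}(ii). Your extra care in spelling out why both composites are linear before applying that uniqueness result matches exactly what the paper does.
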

\begin{proof}
  First, if $f$ is a $\VP$-algebra morphism, then that $f$ is linear
  follows from the same argument as in the proof
  of~Theorem~\ref{morphismvf}.

  Conversely, if $f$ is a continuous linear map, then we have already
  seen in~Theorem~\ref{morphismvf} that
  $f\circ \alpha (\delta_x)= \beta\circ \VP f(\delta_x)$ for any Dirac
  mass $\delta_x$. Since $\alpha$ and $\beta$ are structure maps, they
  are linear.  From its definition, $\VP f$ is linear.  Hence both
  $f\circ \alpha$ and $\beta\circ \VP f$ are continuous linear maps.
  Using Item~(ii) of Theorem~\ref{uniqueondm}, they coincide
  on~$\VP X$.
\end{proof}

\bibliographystyle{entcs}
\ifentcs
\bibliography{valg}
\else

\fi

\end{document}

